\newtheorem{theorem}{Theorem}[section]
\newtheorem{proposition}[theorem]{Proposition}
\newtheorem{lemma}[theorem]{Lemma}
\newtheorem{corollary}[theorem]{Corollary}
\theoremstyle{definition}
\newtheorem{definition}[theorem]{Definition}
\theoremstyle{remark}
\newtheorem{remark}[theorem]{Remark}
\let\emph\relax 
\DeclareTextFontCommand{\emph}{\bfseries\em}
\definecolor{colR}{rgb}{.932,.172,.172}
\definecolor{colB}{rgb}{.255,.41,.884}
\definecolor{colG}{rgb}{0,0.7,0}
\tikzstyle{edge}=[line width=1.5pt]
\tikzstyle{dedge}=[edge,dashed,gray]
\tikzstyle{redge}=[edge,colR]
\tikzstyle{bedge}=[edge,colB]
\tikzstyle{gedge}=[edge,colG]
\tikzstyle{lnode}=[circle,white,draw, fill=black,inner sep=1pt, font=\scriptsize]
\tikzstyle{hollow}=[circle,gray,draw, thick, fill=white,inner sep=0pt, minimum size=4pt]
\colorlet{colfg}{black}
\colorlet{colgraphv}{colfg!75!white}
\colorlet{colgraphe}{colfg!55!white}
\tikzstyle{vertex}=[fill=colgraphv,circle,inner sep=0pt, minimum size=4pt]
\tikzstyle{edge}=[line width=1.5pt,colgraphe]
\title{How many contacts can exist between oriented squares of various sizes?}
\author{Sean Dewar\thanks{School of Mathematics, University of Bristol. E-mail: \texttt{sean.dewar@bristol.ac.uk}}
}
\begin{document}
\date{}
\maketitle

\begin{abstract}
A homothetic packing of squares is any set of various-size squares with the same orientation where no two squares have overlapping interiors.
	If all $n$ squares have the same size then we can have up to roughly $4n$ contacts by arranging the squares in a grid formation.
	The maximum possible number of contacts for a set of $n$ squares will drop drastically, however, if the size of each square is chosen more-or-less randomly.
	In the following paper we describe a necessary and sufficient condition for determining if a set of $n$ squares with fixed sizes can be arranged into a homothetic square packing with more than $2n-2$ contacts.
	Using this, we then prove that any (possibly not homothetic) packing of $n$ squares will have at most $2n-2$ face-to-face contacts if the various widths of the squares do not satisfy a finite set of linear equations.
\end{abstract}

{\small \noindent \textbf{MSC2020:} 05B40, 52C15, 52C05}

{\small \noindent \textbf{Keywords:} square packings, homothetic packings, contact graphs}

\section{Introduction}\label{sec:intro}

Throughout the paper we fix $S := \{ (x,y) : -1 \leq x,y \leq 1\}$ to be the standard square and $[n] := \{1,\ldots,n\}$ to be the first $n$ positive integers.
A \emph{homothetic copy} of a set $A \subset \mathbb{R}^d$ is any set 
\begin{align*}
	r A +p := \{ r x + p : x \in A\},
\end{align*}
for some scalar $r>0$ and some point $p \in \mathbb{R}^d$.
With this, we define a \emph{homothetic packing of $n$ squares}, or \emph{homothetic square packing} for short, to be any set $P = \{S_1,\ldots,S_n\}$ of homothetic copies of $S$ where for each distinct pair $i,j \in [n]$,
the interiors $S_i^\circ$ and $S_j^\circ$ of the sets $S_i$ and $S_j$ respectively are disjoint.
Given each square in $P$ is of the form $S_i = r_i S + p_i$,
we can characterise $P$ uniquely by two types of variables:
the positive scalar \emph{radii} $r_1,\ldots, r_n$
and the 2-dimensional real vector \emph{centres} $p_1,\ldots,p_n$.
The  \emph{contact graph} $G=([n],E)$ of $P$ is the (simple) graph where $\{i,j\} \in E$ if and only if $i \neq j$ and $S_i \cap S_j \neq \emptyset$.
See \Cref{fig:homsqpacki} for an example of a homothetic square packing and its corresponding contact graph.

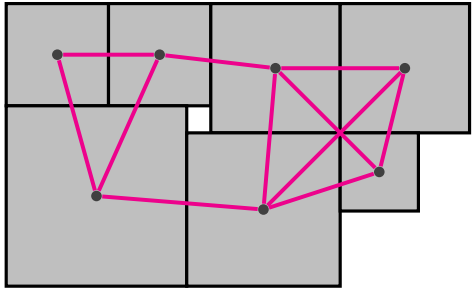
\begin{figure}[ht]
\begin{center}
\begin{tikzpicture}[scale=0.4]
		\draw[very thick, fill=lightgray] (-6,1) rectangle (-2.6,4.4);
		\draw[very thick, fill=lightgray] (-2.6,1) rectangle (0.8,4.4);
		\draw[very thick, fill=lightgray] (0.8,0.1) rectangle (5.1,4.4);
		\draw[very thick, fill=lightgray] (-6,-5) rectangle (0,1);
		\draw[very thick, fill=lightgray] (0,-5) rectangle (5.1,0.1);
		\draw[very thick, fill=lightgray] (5.1,0.1) rectangle (9.4,4.4);
		\draw[very thick, fill=lightgray] (5.1,-2.5) rectangle (7.7,0.1);
		
		\node[vertex] (1) at (-4.3,2.7) {};
		\node[vertex] (2) at (-0.9,2.7) {};
		\node[vertex] (3) at (2.95,2.25) {};
		\node[vertex] (4) at (-3,-2) {};
		\node[vertex] (5) at (2.55,-2.45) {};
		
		\node[vertex] (E) at (7.25,2.25) {};
		\node[vertex] (F) at (6.4,-1.2) {};
			
		\draw[edge, magenta] (1)edge(2);
		\draw[edge, magenta] (2)edge(3);
		\draw[edge, magenta] (2)edge(4);
		\draw[edge, magenta] (1)edge(4);
		\draw[edge, magenta] (3)edge(5);
		\draw[edge, magenta] (4)edge(5);
		\draw[edge, magenta] (E)edge(3);
		\draw[edge, magenta] (E)edge(5);
		\draw[edge, magenta] (F)edge(3);
		\draw[edge, magenta] (F)edge(5);
		\draw[edge, magenta] (E)edge(F);
	\end{tikzpicture}
	\end{center}
	\caption{A homothetic square packing with 11 contacts. The edges of its contact graph are represented by the coloured lines.}
	\label{fig:homsqpacki}
\end{figure}

An immediate question one can ask is the following: what is the upper bound on the number of contacts for a homothetic packing of $n$ squares?
It is easy to see that roughly $4n - 6\sqrt{n} +2$ contacts can be achieved;
given $n = n_1 n_2$ squares with the same radii, the homothetic square packing formed by arranging the squares in a $n_1 \times n_2$ grid has $4n - 3(n_1 + n_2) + 2$ contacts.
However it is easy to see that this type of packing (or any similar packing formed by replacing square blocks of $k^2$ unit squares with a single square of radii $k$)
forces the radii to satisfy some rational linear constraints.

What, then, should the maximum number of contacts be for a homothetic square packing if the radii are picked more-or-less randomly?
Although our previous construction proves that around $4n$ contacts are indeed possible, one very quickly notices that this is not the case if the radii of the squares are chosen at random.
We encourage the reader now to construct $n$ squares of various widths (whether from paper or other means) and try to arrange them in a way that maximises the amount of contacts while keeping the squares oriented in the same way.
It becomes apparent very quickly that the most amount of contacts possible is never more than $2n-2$, no matter how the squares are arranged.

Our main result of this paper is that the maximum number of contacts achievable by a homothetic packing of $n$ squares will exceed $2n-2$ if and only if the radii of the chosen squares satisfy some very basic linear constraints.

\begin{theorem}\label{mainthm}
    Let $r_1,\ldots,r_n$ be positive scalars.
    Then the following statements are equivalent:
    \begin{enumerate}
        \item\label{mainthm1} Every homothetic packing of $n$ squares with radii $r_1,\ldots, r_n$ has at most $2n-2$ contacts.
        \item\label{mainthm2} The only function $\sigma : [n] \rightarrow \{-1,0,1\}$ with at least 4 zeroes that satisfies the equation $\sum_{i=1}^n \sigma_i r_i = 0$ is the zero function.
    \end{enumerate}
\end{theorem}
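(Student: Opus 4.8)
The plan is to recast the problem in terms of two auxiliary graphs. Writing $S_i = r_i S + p_i$ with $p_i=(x_i,y_i)$, every contact between $S_i$ and $S_j$ is of exactly one of three kinds: a \emph{horizontal} contact, where a vertical side of one square meets a vertical side of the other along a segment of positive length, which forces $x_i-x_j=\pm(r_i+r_j)$; a \emph{vertical} contact, forcing $y_i-y_j=\pm(r_i+r_j)$; or a \emph{corner} contact, where two corners coincide, which forces both equations. Let $G_H$ be the graph on $[n]$ recording the contacts that impose an $x$-equation (the horizontal and corner ones) and $G_V$ the analogue for $y$-equations; then the number of contacts equals $|E(G_H)|+|E(G_V)|$ minus the number of corner contacts. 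The first step is the observation, proved by telescoping the $x$-equations around a cycle $C$ of $G_H$, that this yields a relation $\sum_i \sigma_i r_i=0$ with $\sigma\in\{-1,0,1\}^n$ supported on $C$, in which $\sigma_i=0$ exactly when $i\notin C$ or $i$ is a local extremum of $C$ in the $x$-direction. Because the cyclic up/down pattern of $C$ changes a positive even number of times, $C$ has at least two $x$-extrema, so the relation has at least $(n-|C|)+2$ zeroes; in particular at least $4$ whenever $|C|\le n-2$ or $C$ has four or more $x$-extrema. The same applies to cycles of $G_V$ with $y$ in place of $x$.

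For (ii) $\Rightarrow$ (i) I would prove the contrapositive: from a packing with at least $2n-1$ contacts extract a nonzero $\sigma$ with at least four zeroes obeying $\sum_i\sigma_i r_i=0$. Since $|E(G_H)|+|E(G_V)|\ge 2n-1$, a count of cycle ranks (first Betti numbers) gives $\beta(G_H)+\beta(G_V)\ge 1$, so a cycle exists. If some cycle has length $\le n-2$ or at least four extrema in its direction, the observation above finishes the job, so the heart of the matter is when every cycle of $G_H$ (resp.\ $G_V$) has exactly two $x$- (resp.\ $y$-) extrema and length $n-1$ or $n$. Here I would use the geometric fact that along each of the two monotone arcs of such a cycle the $x$-projections of its squares tile an interval; hence the two extremal squares of the cycle have $x$-projections that positively overlap no other square of the cycle, so they carry no vertical or corner contact to a cycle-square. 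When the cycle spans $[n]$ this makes both extrema isolated in $G_V$, forcing $\beta(G_V)\ge 2$ and hence a short cycle of $G_V$ (it lives on $\le n-2$ vertices) which supplies the required relation; when one vertex is off the cycle, the extrema have their $G_V$-neighbours confined to that single vertex, which after a short argument (including ruling out two long cycles of $G_H$ coexisting) again forces either extra zeroes or a short auxiliary cycle. Organising this endgame cleanly is where I expect most of the difficulty in this direction.

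For (i) $\Rightarrow$ (ii), again contrapositively: given nonzero $\sigma$ with at least four zeroes and $\sum_i\sigma_i r_i=0$, construct a packing with at least $2n-1$ contacts. Set $T^{\pm}=\sigma^{-1}(\pm 1)$ — both nonempty, with $\sum_{T^+}r_i=\sum_{T^-}r_i$ — and $Z=\sigma^{-1}(0)$, with $|Z|\ge 4$. The backbone is a \emph{racetrack}: choose $z_1,z_2\in Z$, arrange $T^+$ as a horizontal chain and $T^-$ as a parallel horizontal chain, and cap the two chains with $z_1$ on the left and $z_2$ on the right; the relation $\sum_{T^+}r_i=\sum_{T^-}r_i$ is precisely the condition that the two chains span the same horizontal interval, so this closes up to a cycle of $G_H$ contributing $|T|+2$ edges there. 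One then places the remaining at-least-two squares of $Z$ and adds the vertical contacts between and around the chains so that $G_V$ becomes a spanning tree while $G_H$ stays connected; the two extra zero-squares are used to endow $z_1$ and $z_2$ — whose $x$-projections meet the chains only at a point — with their (otherwise impossible) vertical contacts and to link $G_V$ across the chains, which is exactly where "at least four" rather than two zeroes is essential. Checking that arbitrary radii fit without overlapping interiors, and that the contact count really reaches $2n-1$, is the main obstacle of the whole proof; I would handle it by an explicit construction, choosing carefully which zero-square plays which role and leaving a controlled "hole" inside the racetrack to absorb the rest.
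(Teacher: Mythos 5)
Your overall strategy is the same as the paper's (telescoping the $x$-equations around a monochromatic cycle to produce a relation $\sum_i\sigma_ir_i=0$, and a two-chain construction for the converse), but both directions have genuine gaps. In the direction ``many contacts $\Rightarrow$ a witnessing $\sigma$'', your claim that a cycle of length at most $n-2$ ``finishes the job'' is false: when the cycle zigzags, i.e.\ every vertex is a local $x$-extremum, your recipe gives $\sigma\equiv 0$, which is not a valid witness. This is not a corner case --- it is exactly what happens for the $4$-cycles produced by four squares sharing a corner, and such configurations are present in packings that attain $2n-2$ contacts (see the paper's extremal construction). So the mere existence of a cycle in $G_H$ or $G_V$ (which is all your Betti-number count gives) yields nothing; you must separately show that if every monochromatic cycle is a corner $4$-cycle then $|E|\le 2n-2$ (the paper does this by deleting one edge per shared corner, leaving two spanning forests), so that $|E|\ge 2n-1$ forces a cycle that is either non-zigzagging or long. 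The long-cycle ``endgame'' that you defer is also where most of the work lies (the paper needs a forbidden-subgraph lemma and a three-case analysis of how the off-cycle vertex attaches), and your sketch of it --- e.g.\ ``forcing $\beta(G_V)\ge 2$ and hence a short cycle of $G_V$'' --- again runs into the problem that a short cycle of $G_V$ may be a zigzag and give only $\sigma\equiv0$.

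In the construction direction, the racetrack as you describe it does not reach $2n-1$ contacts. Capping the two chains with single squares $z_1,z_2$ gives an $E_x$-cycle of $n-2$ edges plus roughly $n-5$ inter-chain $y$-contacts, i.e.\ about $2n-7$; since $z_1,z_2$ meet the chain squares' $x$-projections only at a point they admit no further face contacts with the chains, and the two remaining zero-squares each contribute only about two new contacts, leaving you near $2n-3$. The paper closes this gap by using the four zero-squares in \textbf{pairs}: each pair forms a corner-sharing $4$-clique with the two chain-ends on its side, contributing five new edges per end rather than two or three. You also need the reduction to exactly four zeroes (each additional square can always be attached with at least two new contacts, the paper's Lemma 4.1); ``leaving a controlled hole inside the racetrack'' does not substitute for this, since squares parked inside a hole may contribute no contacts at all. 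Both gaps are repairable, but as written neither direction is complete.
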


In \cite{congortheran2019},
Connelly, Gortler and Theran proved an analogous result to \Cref{mainthm} for disc packings (the definition of a disc packing being identical to that of a homothetic square packing except with the square $S$ replaced by the closed unit disc).

\begin{theorem}[\cite{congortheran2019}]\label{t:cgt}
    Let $r_1,\ldots,r_n$ be positive scalars that are mutually distinct and form an algebraically independent set.
    Then every packing of $n$ discs with radii $r_1,\ldots, r_n$ has at most $2n-3$ contacts.
\end{theorem}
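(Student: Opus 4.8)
To prove \Cref{t:cgt} I would argue by contraposition and translate disc contacts into the language of bar–joint rigidity. A contact between discs $i$ and $j$ forces the centres to satisfy $\|p_i-p_j\|^2=(r_i+r_j)^2$, so a packing with contact graph $G=([n],E)$ is precisely a realisation of the graph $G$ with prescribed squared edge lengths $\ell_{ij}^2=(r_i+r_j)^2$. Suppose, for contradiction, that such a packing has $|E|\ge 2n-2$ contacts. The generic two–dimensional rigidity matroid has rank $2n-3$, so any edge set with at least $2n-2$ edges fails the Laman sparsity bound and is therefore dependent; hence it contains a \emph{rigidity circuit}, a subgraph $H=(V',E')$ on $k:=|V'|$ vertices with $|E'|=2k-2$ such that deleting any single edge leaves an isostatic graph. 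The plan is to use $H$ to manufacture a nonzero polynomial satisfied by the radii $\{r_v:v\in V'\}$, contradicting their algebraic independence.

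The concrete vehicle is the squared–length map $g_H\colon\mathbb{R}^{2k}\to\mathbb{R}^{E'}$, $g_H(p)=(\|p_v-p_w\|^2)_{vw\in E'}$. Its image is invariant under the $3$–dimensional group of plane isometries, so (after complexifying) $\dim\overline{\operatorname{im}g_H}\le 2k-3<2k-2=|E'|$, and thus $\overline{\operatorname{im}g_H}$ is contained in a hypersurface $\{\Phi=0\}$ for some nonzero integer polynomial $\Phi$ in the squared–length variables. Every realisation of $H$ — in particular the centres of our sub-packing — satisfies $\Phi=0$, and since the sub-packing obeys $\ell_{vw}^2=(r_v+r_w)^2$ we obtain a relation $\Psi(r):=\Phi\bigl((r_v+r_w)^2:vw\in E'\bigr)=0$. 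If $\Psi$ is not the zero polynomial, then $\Psi=0$ is a nontrivial algebraic relation among $\{r_v:v\in V'\}$, contradicting algebraic independence; this forces $|E|\le 2n-3$ and proves the theorem.

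The hard part is exactly to show $\Psi\not\equiv 0$, equivalently that the $k$–dimensional ``tangency length pattern'' $\ell_{vw}=r_v+r_w$ is, for generic $r$, \emph{not} realisable by any configuration of $H$, so that the tangency locus is not swallowed by the $(2k-3)$–dimensional length variety $\overline{\operatorname{im}g_H}$. The prototype is $H=K_4$: there $\Phi$ is the Cayley–Menger determinant and $\Psi(r)=\operatorname{CM}\bigl((r_v+r_w)^2\bigr)$ is (a multiple of) the Descartes circle relation, which is genuinely nonzero — four mutually tangent discs of arbitrary radii do not generally fit in the plane. For a general circuit I would reduce $\Psi\not\equiv0$ to an infinitesimal nondegeneracy statement. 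Writing $\omega$ for the essentially unique self-stress of $H$ and $\Omega$ for its stress matrix, one checks that $\Psi\equiv0$ is essentially equivalent, at generic realisations, to $\omega$ being \emph{radius-orthogonal}, i.e.\ to an identity of the form $\Omega r=2\operatorname{diag}(\Omega)\,r$; indeed this is precisely the condition under which appending the radius-derivative columns fails to raise the rank of the rigidity matrix, so that the projection $(p,r)\mapsto r$ from the augmented tangency variety becomes dominant. It therefore suffices to exhibit one realisation of the circuit whose self-stress violates this identity.

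The main obstacle is to establish this nondegeneracy \emph{uniformly over all rigidity circuits}, and at the genuinely non-generic centre configurations that packings produce. My intended route is induction along an inductive construction of $2$–dimensional rigidity circuits (edge additions and vertex splits), taking $K_4$/Descartes as the base case and arguing that non-radius-orthogonality of the self-stress is preserved under each move; the positivity $\ell_{vw}=r_v+r_w>0$ forced by honest discs is what I expect to drive the inductive step and to rule out the degenerate identity. Once $\Psi\not\equiv0$ is secured for every circuit, the contradiction with algebraic independence is immediate, and the resulting bound $|E|\le 2n-3$ matches exactly the generic isostatic (Laman) count, which is the structural reason the threshold is $2n-3$ rather than the planar bound $3n-6$.
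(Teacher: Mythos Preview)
This theorem is not proved in the present paper; it is quoted from \cite{congortheran2019}, with the method summarised in one sentence: Connelly, Gortler and Theran ``construct[ed] a smooth manifold of disc packings with a given contact graph, and then show[ed] that any disc packing with algebraically independent radii will be a regular point of a projection.'' Your circuit/measurement-variety reformulation is a legitimate restatement of the same non-dominance phenomenon (the projection from the tangency variety to radius space cannot be onto when $|E|\ge 2n-2$), so the overall architecture is compatible with what the paper reports, though the packaging is different.

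Where your proposal has a genuine gap is precisely the step you yourself label ``the hard part'': establishing $\Psi\not\equiv 0$ for every $2$-dimensional rigidity circuit $H$. You do not prove this; you sketch an induction along a construction sequence for circuits and state that positivity of the $\ell_{vw}$ is ``what I expect to drive the inductive step.'' Two specific problems: first, the claimed equivalence between $\Psi\equiv 0$ and the stress identity $\Omega r=2\operatorname{diag}(\Omega)r$ is asserted without justification, and it is not obvious in which sense it holds or for which $(p,r)$; second, the self-stress of a circuit obtained by a vertex split is not a simple function of the old self-stress, so preservation of ``non-radius-orthogonality'' under such moves is far from automatic. In the actual argument of \cite{congortheran2019}, the corresponding nondegeneracy is not handled by induction on abstract circuits at all: it comes from a geometric fact specific to genuine packings, namely the sign constraint that convexity and non-overlap impose on any equilibrium stress of a disc packing. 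That ingredient (or a replacement for it) is absent from your outline, and without it what you have is a plausible plan rather than a proof.
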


Although similar, \Cref{mainthm,t:cgt} do differ in a few specific ways.
Connelly, Gortler and Theran proved \Cref{t:cgt} by constructing a smooth manifold of disc packings with a given contact graph,
and then showing that any disc packing with algebraically independent radii will be a regular point of a projection.
Our method for homothetic square packings, however, only requires very simple concepts from geometry and combinatorics.
Furthermore, \Cref{mainthm} describes both a sufficient and necessary condition for a set of radii to generate packings with a low amount of contacts,
while \Cref{t:cgt} only provides a sufficient condition.

\Cref{t:cgt} was in recent years also extended to homothetic packings of any convex body $C \subset \mathbb{R}^2$ (a compact convex set with non-empty interior), so long as the convex body is also centrally symmetric ($x \in C$ if and only if $-x \in C$), strictly convex (every point on the boundary of $C$ is contained in a supporting hyperplane of $C$ that intersects $C$ at exactly one point) and smooth (every point on the boundary of $C$ is contained in exactly one supporting hyperplane of $C$).
Any such set is also known as a \emph{regular symmetric body}.

\begin{theorem}[\cite{dew21}]\label{t:dew}
    For every regular symmetric body $C$ and every positive integer $n \in \mathbb{N}$,
    there exists a conull\footnote{A set is conull if its complement is a null set, i.e., has Lebesgue measure zero.} set of vectors $(r_1,\ldots,r_n)$ in $\mathbb{R}^n_{>0}$ so that the following holds:
    every packing of $n$ homothetic copies of $C$ with radii $r_1,\ldots, r_n$ has at most $2n-2$ contacts.
\end{theorem}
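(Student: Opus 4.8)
My plan is to adapt the strategy behind \Cref{t:cgt}, replacing ``radii that are algebraically independent'' by ``radii outside a Lebesgue--null set'' and the Euclidean norm by the Minkowski gauge $\gamma_C:=\|\cdot\|_C$ of $C$. Since $C$ is centrally symmetric and convex, $r_iC+p_i$ and $r_jC+p_j$ have disjoint interiors precisely when $\gamma_C(p_i-p_j)\ge r_i+r_j$, and are in contact precisely when equality holds; and since $C$ is a regular symmetric body, $\gamma_C$ is $C^1$ on $\mathbb R^2\setminus\{0\}$ with nowhere-vanishing gradient (by Euler's identity $\langle x,\nabla\gamma_C(x)\rangle=\gamma_C(x)$), while $\partial C$ is a strictly convex $C^1$ Jordan curve. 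We may assume $n\ge5$, since for $n\le4$ a packing has at most $\binom n2\le 2n-2$ contacts. Because a packing with more than $2n-2$ contacts realises all contacts of some graph $G$ on $[n]$ with $|E(G)|=2n-1$, and there are finitely many such $G$, it suffices to prove that for each of them the set $R_G\subseteq\mathbb R^n_{>0}$ of radius-vectors admitting a packing of homothets of $C$ whose contact graph contains $G$ is null.

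Fix $G$. A short counting argument shows some connected component $H$ of $G$ has at least $2|V(H)|-1$ edges, and a set null in the coordinates indexed by $V(H)$ is null in $\mathbb R^n$ by Fubini, so we may assume $G$ is connected; write $k=|V(G)|\le n$, $m=|E(G)|\ge 2k-1$, and correspondingly $R_G\subseteq\mathbb R^k_{>0}$. Fix a spanning tree $\mathcal T\subseteq G$ rooted at $1$ with parent map $\pi$, and normalise packings by the translation sending $p_1$ to $0$. The locally Lipschitz map
\[
\Phi\colon\ \mathbb R^k_{>0}\times(\partial C)^{k-1}\longrightarrow\mathbb R^{2k}\times\mathbb R^k,\qquad
\Phi\bigl(r,(u_i)_{i\ne1}\bigr)=(p,r),\quad p_i=p_{\pi(i)}+(r_i+r_{\pi(i)})\,u_i,
\]
grows a configuration leaf-by-leaf along $\mathcal T$; its domain has dimension $2k-1$, and its image is exactly the set of configurations realising every contact of $\mathcal T$. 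Hence every packing contributing to $R_G$ corresponds, after the normalising translation, to a point of $\Phi(Z_G)$, where $Z_G=g^{-1}(0)$ for the $C^1$ map $g\colon\mathbb R^k_{>0}\times(\partial C)^{k-1}\to\mathbb R^{m-k+1}$ recording $\gamma_C(p_i-p_j)-r_i-r_j$ over the $m-(k-1)\ge k$ edges of $G$ outside $\mathcal T$. Since $\operatorname{proj}_r\circ\Phi$ is simply the projection to the $\mathbb R^k$ factor, it is enough to show that the projection of $Z_G$ to $\mathbb R^k$ is null.

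Heuristically $Z_G$ is cut out of a $(2k-1)$-dimensional $C^1$ manifold by $m-k+1\ge k$ equations and so ``should'' have dimension $\le k-1$; since a locally Lipschitz image of a set of Hausdorff dimension $\le k-1$ has $\sigma$-finite $\mathcal H^{k-1}$-measure in $\mathbb R^k$ and hence is Lebesgue-null, this would finish the proof. With only $C^1$ data one cannot invoke Sard's theorem for the unequal-dimensional map $g$, so instead I would argue as follows: suppose $\operatorname{rank}dg\ge k$ at every point of $Z_G$ that represents a genuine (non-overlapping) packing. Around any such point pick $k$ components of $g$ that are submersive there; their common zero set is a $C^1$ submanifold of dimension $(2k-1)-k=k-1$ locally containing $Z_G$, so by second countability the packing-part of $Z_G$ lies in a countable union of $(\le k-1)$-dimensional $C^1$ submanifolds, whose image under the Lipschitz coordinate projection to $\mathbb R^k$ is null. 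Everything therefore reduces to the bound $\operatorname{rank}dg\ge k$; undoing the tree normalisation (and using that the $k-1$ tree equations are independent and that translations are flexes of all contact equations), this is equivalent to: at a configuration of homothets of $C$ touching according to $G$ the $m\times 3k$ Jacobian of the edge map $(p,r)\mapsto\bigl(\gamma_C(p_i-p_j)-r_i-r_j\bigr)_{\{i,j\}\in E}$ has rank at least $2k-1$, i.e.\ those $2k-1$ or more contact equations are independent there.

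This rank estimate is the crux, and I expect it to be the main obstacle: it holds for generic positions, but has to be established at the very special configurations realised by honest packings, and this is exactly where the regularity of $C$ enters — strict convexity and smoothness keep the contact normals $\nabla\gamma_C(p_i-p_j)$ in sufficiently general position, while the non-overlap condition excludes the degenerate coincidences that would make the equations dependent. For $C$ the disc this is carried out by Connelly, Gortler and Theran via planarity of the contact graph and the stress matrix; for a general regular symmetric body one loses both the rotational symmetry (when $C$ is not an ellipse) and the analytic toolkit, so the argument of \cite{dew21} is necessarily more hands-on. Two minor remaining points: if $C$ is an ellipse the statement follows from \Cref{t:cgt} by an affine change of coordinates taking $C$ to the unit disc, since affine maps preserve packings, contact graphs and null sets; and reducing an edge-dense but possibly disconnected $G$ to a connected one — together with the book-keeping between $\mathbb R^k$ and $\mathbb R^n$ — is routine once the counting lemma on components is available.
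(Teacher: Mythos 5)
First, a point of orientation: \Cref{t:dew} is quoted from \cite{dew21} and is not proved anywhere in this paper, so there is no internal proof to measure your attempt against; the comparison below is with the strategy actually carried out in \cite{dew21}, which, like your sketch, follows the manifold-and-projection route of \cite{congortheran2019}. Your reductions are all sound: it suffices to treat one candidate graph $G$ with $2n-1$ edges at a time; some component $H$ satisfies $|E(H)|\ge 2|V(H)|-1$, and Fubini lets you work inside $\mathbb{R}^{k}$ for $k=|V(H)|$; the spanning-tree parametrisation $\Phi$ on the $(2k-1)$-dimensional domain $\mathbb{R}^k_{>0}\times(\partial C)^{k-1}$ is legitimate because $\gamma_C$ is $C^1$ away from the origin when $C$ is smooth; and if the zero set $Z_G$ of the $m-k+1\ge k$ non-tree edge equations were locally contained in $(k-1)$-dimensional $C^1$ submanifolds, its Lipschitz projection to radius space would indeed be Lebesgue-null.

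The genuine gap is the one you flag yourself and then leave unproved: the bound $\operatorname{rank} dg\ge k$ at every point of $Z_G$ corresponding to an honest packing. This is not a technical loose end but the entire content of the theorem. Without it the dimension count gives nothing: a priori $Z_G$ could contain a relatively open piece of the $(2k-1)$-dimensional domain, whose projection to $\mathbb{R}^k$ has positive measure, and the conclusion would fail. Equivalently, what must be shown is that the $m\ge 2k-1$ contact equations are independent at a packing configuration, i.e.\ that the contact framework carries no nonzero equilibrium stress. In \cite{congortheran2019} and \cite{dew21} this is exactly where all the geometry enters: strict convexity forces the contact graph of a packing to be planar (contact "struts" cannot cross), smoothness gives well-defined contact normals, and a stress/counting argument on the resulting planar framework rules out dependencies; this occupies the bulk of those papers and is also precisely the step that breaks for the square (which is why the present paper needs the entirely different, combinatorial argument of \Cref{mainthm}). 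As written, your proposal is a correct reduction of the theorem to its hard core, not a proof of it; the affine-image remark for ellipses and the component bookkeeping are fine but peripheral.
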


Although a square is a convex body, it is neither strictly convex nor smooth,
and hence is not covered by \Cref{t:dew}.
In any case, \Cref{t:dew} is a noticeably weaker result than both \Cref{mainthm,t:cgt} as it does not describe either a necessary or a sufficient condition for a given set of radii to only generate packings with low numbers of contacts.

The paper is structured as follows.
In \Cref{sec:basic} we introduce the various types of contacts a homothetic square packing can have,
and use this to define red and blue edge colourings for a homothetic square packing's contact graph.
In \Cref{sec:cycle} we investigate the effect the weak generic condition (see \Cref{def:wgc}) has on the cycles in the induced red and blue subgraphs of the contact graph.
These techniques are then applied to proving \Cref{mainthm} in \Cref{sec:main}.
In \Cref{sec:facetoface} we use \Cref{mainthm} to obtain analogous results for square packings that allow for rotated squares (\Cref{cor:rotatedsquares}).
We conclude the paper in \Cref{sec:cube} by proving that the natural analogue of \Cref{mainthm} cannot be extended to homothetic cube packings

\begin{remark}
	A homothetic square packing can be considered to be a bar-and-joint framework in the normed space $(\mathbb{R}^2,\| \cdot\|_\infty)$ by modelling the centres as points in $\mathbb{R}^2$, the edges as $\ell_\infty$-norm distance equalities between points (to simulate squares in contact), and the non-edges as strict $\ell_\infty$-norm distance inequalites (to simulate squares not intersecting).
	Whilst we will avoid using the language of bar-and-joint framework rigidity theory here,
	we do direct interested readers to the work of Kitson and Power for more information about the topic \cite{kit-pow}.
\end{remark}

\section{Contact graphs for homothetic square packings}\label{sec:basic}

Unless stated otherwise,
a homothetic square packing $P=\{S_1,\ldots, S_n\}$ will have contact graph $G=([n],E)$, centres $p_1,\ldots, p_n$ and radii $r_1,\ldots,r_n$.
We also denote the $x$- and $y$-coordinates of a centre $p_i$ to be $x_i,y_i$,
i.e., $p_i = (x_i,y_i)$.
If two distinct squares $S_i$ and $S_j$ are in contact, at least one of two possible cases holds.
\begin{enumerate}
	\item If $r_i + r_j = |x_i - x_j| \geq |y_i - y_j|$ then $S_i$ and $S_j$ have an \emph{$x$-direction contact};
	equivalently,
	$S_i$ and $S_j$ have an $x$-direction contact if and only if $S_i \cap S_j = \{(a,t) \in \mathbb{R}^2 : b \leq t \leq c\}$ for some $a,b,c \in \mathbb{R}$ with $b \leq c$.
	\item If $r_i + r_j =|y_i - y_j| \geq |x_i - x_j|$ then $S_i$ and $S_j$ have a \emph{$y$-direction contact};
	equivalently,
	$S_i$ and $S_j$ have a $y$-direction contact if and only if $S_i \cap S_j = \{(t,a) \in \mathbb{R}^2 : b \leq t \leq c\}$ for some $a,b,c \in \mathbb{R}$ with $b\leq c$.
\end{enumerate}
The intersection of $S_i$ and $S_j$ will always contain the point
\begin{align}\label{eq:midpoint}
	p_{ij} := \left( x_i - \frac{r_i(x_i-x_j)}{r_i+r_j} , ~ y_i - \frac{r_i(y_i-y_j)}{r_i+r_j}\right) = \left( x_j + \frac{r_j(x_i-x_j)}{r_i+r_j} , ~ y_j + \frac{r_j(y_i-y_j)}{r_i+r_j} \right).
\end{align}
If two squares have both an $x$-direction and $y$-direction contact then $S_i \cap S_j = \{p_{ij}\}$,
and $p_{ij}$ will be a corner of both of $S_i$ and $S_j$.
In fact, this is the only way two squares in a homothetic packing can intersect at a single point.
See \Cref{fig:contacts} to see a diagram of the possible types of contact between two squares.

\begin{figure}[ht]
\begin{center}
\begin{tikzpicture}[scale=0.4]
		
		\draw[very thick, fill=lightgray] (0,-5) rectangle (5.1,0.1);
		\node[vertex] (5) at (2.55,-2.45) {};
		
		\draw[very thick, fill=lightgray] (5.1,-1.9) rectangle (9.2,2.2);
		\node[vertex] (E) at (7.23,0.23) {};

		\draw[edge, red] (E)edge(5);
		
	\end{tikzpicture}\qquad\qquad
    \begin{tikzpicture}[scale=0.4]
		
		\draw[very thick, fill=lightgray] (0.65,0.1) rectangle (4.95,4.4);
		\node[vertex] (3) at (2.8,2.25) {};
		
		\draw[very thick, fill=lightgray] (0,-5) rectangle (5.1,0.1);
		\node[vertex] (5) at (2.55,-2.45) {};

		\draw[edge, blue] (3)edge(5);
		
	\end{tikzpicture}\qquad\qquad
    \begin{tikzpicture}[scale=0.4]
		
		\draw[very thick, fill=lightgray] (0,-5) rectangle (5.1,0.1);
		\node[vertex] (5) at (2.55,-2.45) {};
		
		\draw[very thick, fill=lightgray] (5.1,0.1) rectangle (9.2,4.2);
		\node[vertex] (E) at (7.23,2.23) {};
		
		\path[edge,red] (E) edge [bend right=10] node {} (5);
		\path[edge,blue] (E) edge [bend left=10] node {} (5);
		
	\end{tikzpicture}
	\end{center}
	\caption{Three possible types of contact between two squares: an $x$-direction contact represented by a red edge (left), a $y$-direction contact represented by a blue edge (middle) and a $x$- and $y$-direction contact represented by a red-blue pair of parallel edges (right).
	Although the latter type of contact is represented by two parallel edges, it will still be considered to be a single contact.}
	\label{fig:contacts}
\end{figure}
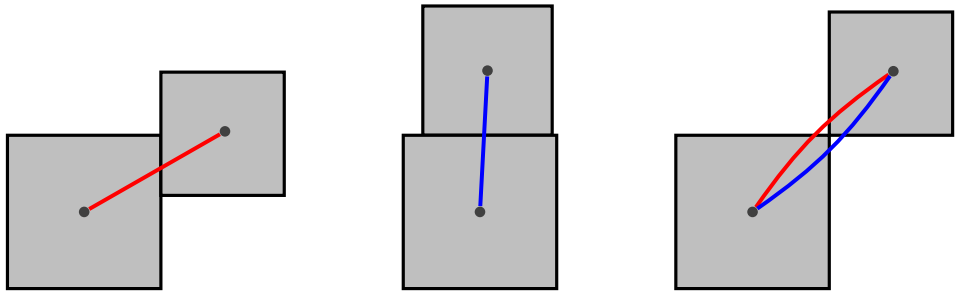

Using this extra information about the edges of the contact graph,
we now define $E_x, E_y \subset E$ to be the sets of $x$-direction and $y$-direction edges respectively.
With this notation, we have that $E_x \cup E_y = E$ and $E_x \cap E_y$ is exactly the set of contacts with a single point in the intersection.
When drawing the contact graph of a homothetic square packing, we shall always represent the edges in the set $E_x \setminus E_y$ by a red line, the edges in the set $E_y \setminus E_x$ by a blue line,
and the edges in the set $E_x \cap E_y$ by both a red line and a blue line.
Importantly, these ``double edges'' are still only counted as a single edge in our contact graph.
See \Cref{fig:contacts} to see how the different contacts are represented.
Interestingly, the subgraphs $([n],E_x),([n],E_y)$ must always be triangle-free.

\begin{lemma}\label{mainlem0}
	Let $P$ be a homothetic packing of $n$ squares.
	Then the coloured subgraphs $([n],E_x)$, $([n],E_y)$ of the contact graph $G=([n],E)$ are triangle-free.
\end{lemma}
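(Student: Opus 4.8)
The plan is to argue by contradiction using only the defining equation of an $x$-direction contact, namely that if $S_a$ and $S_b$ have an $x$-direction contact then $|x_a - x_b| = r_a + r_b$. Suppose $([n],E_x)$ contains a triangle on the vertices $i,j,k \in [n]$. Since each $r_i$ is a positive scalar, each of the three equations $|x_i - x_j| = r_i + r_j$, $|x_j - x_k| = r_j + r_k$ and $|x_i - x_k| = r_i + r_k$ has a strictly positive right-hand side, so the three real numbers $x_i, x_j, x_k$ are pairwise distinct. After relabelling the vertices we may assume $x_i < x_j < x_k$; in particular $x_j$ lies strictly between $x_i$ and $x_k$, so $|x_i - x_k| = |x_i - x_j| + |x_j - x_k|$.

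Substituting the three contact equations into this identity gives $r_i + r_k = (r_i + r_j) + (r_j + r_k) = r_i + 2r_j + r_k$, and hence $r_j = 0$, contradicting $r_j > 0$. Therefore $([n],E_x)$ is triangle-free. The argument for $([n],E_y)$ is word-for-word identical after exchanging the roles of the $x$- and $y$-coordinates and using the defining equation $|y_a - y_b| = r_a + r_b$ of a $y$-direction contact. (Note that the argument never uses the packing hypothesis that the interiors are disjoint; it is purely the statement that three intervals on a line cannot be pairwise externally tangent.)

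There is no serious obstacle here. The only point that needs a word of care is the observation that the three $x$-coordinates are genuinely distinct, which is exactly where positivity of the radii enters; once that is in hand, the middle coordinate splits the outer gap and the tangency lengths fail to add up, forcing the contradiction.
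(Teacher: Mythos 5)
Your proof is correct and follows essentially the same route as the paper: both arguments relabel so that the three $x$-coordinates are strictly ordered, use the additivity of the gaps $|x_i-x_k| = |x_i-x_j|+|x_j-x_k|$ together with the three tangency equations, and derive $r_j = 0$, contradicting positivity of the radii. Your closing remark that the packing (disjoint-interiors) hypothesis is never used is accurate and consistent with the paper's proof as well.
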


\begin{proof}
	It suffices to prove that $([n],E_x)$ is triangle-free since rotating $P$ by $90^\circ$ will switch the edges $E_x$ and $E_y$.
	Suppose for contradiction that $([n],E_x)$ contains a triangle.
	By relabelling vertices of $G$ we may suppose that $\{1,2,3\}$ is a clique in $([n],E_x)$ and $x_1 \leq x_2 \leq x_3$.
	Since $r_i + r_j = |x_i-x_j|$ for $1 \leq i < j \leq 3$,
	all three $x_1,x_2,x_3$ must be distinct,
	i.e., $x_1<x_2 <x_3$.
	Hence
	\begin{align*}
		x_2 - x_1 = r_1 + r_2, \qquad x_3 - x_1 = r_1 + r_3, \qquad x_3 - x_2 = r_2 + r_3.
	\end{align*}
	By summing all three equations and halving the result,
	we have that $x_3 - x_1 = r_1 + r_2 + r_3$.
	However this now implies that $r_2 = 0$,
	contradicting that all radii are positive.
\end{proof}

One special way that one of the graphs $([n],E_x)$ or $([n],E_y)$ can contain a cycle is for four squares to share an intersection as seen in \Cref{fig:4share};
if this occurs, we say the four squares \emph{share a corner}.
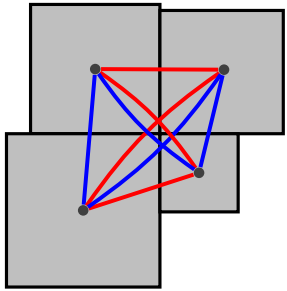
\begin{figure}[ht]
\begin{center}
\begin{tikzpicture}[scale=0.4]
		
		\draw[very thick, fill=lightgray] (0.8,0.1) rectangle (5.1,4.4);
		\node[vertex] (3) at (2.95,2.25) {};
		
		\draw[very thick, fill=lightgray] (0,-5) rectangle (5.1,0.1);
		\node[vertex] (5) at (2.55,-2.45) {};
		
		\draw[very thick, fill=lightgray] (5.1,0.1) rectangle (9.2,4.2);
		\node[vertex] (E) at (7.23,2.23) {};
		
		\draw[very thick, fill=lightgray] (5.1,-2.5) rectangle (7.7,0.1);
		\node[vertex] (F) at (6.4,-1.2) {};

		\draw[edge, blue] (3)edge(5);
		\draw[edge, red] (E)edge(3);
		\draw[edge, red] (F)edge(5);
		\draw[edge, blue] (E)edge(F);
		
		\path[edge,red] (E) edge [bend right=10] node {} (5);
		\path[edge,blue] (E) edge [bend left=10] node {} (5);
		
		\path[edge,red] (F) edge [bend right=10] node {} (3);
		\path[edge,blue] (F) edge [bend left=10] node {} (3);
		
	\end{tikzpicture}
	\end{center}
	\caption{Four squares sharing a corner.}
	\label{fig:4share}
\end{figure}

As we shall soon prove, 
the only way to generate cliques with more than three vertices is with four squares sharing a corner.
We first need to cover the following famous result of Helly.

\begin{theorem}[Helly's theorem; see, for example, \cite{dankleegru}]\label{t:helly}
    Let $\mathcal{C} = \{C_1,\ldots, C_n\}$ be a set of convex sets in $\mathbb{R}^d$ where $n \geq d+1$.
    If every $d+1$ distinct sets in $\mathcal{C}$ have a non-empty intersection,
    then $\bigcap_{i=1}^n C_i \neq \emptyset$.
\end{theorem}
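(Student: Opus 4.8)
The plan is to obtain Helly's theorem from the classical \emph{Radon partition theorem} together with an induction on the number $n$ of convex sets. The base of the induction is the case $n = d+1$, which is precisely the hypothesis, so all of the work lies in the inductive step and in setting up the Radon partition tool.

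First I would prove Radon's theorem: any $m \geq d+2$ points $y_1,\ldots,y_m \in \mathbb{R}^d$ can be split into disjoint nonempty index classes $I \sqcup J = [m]$ with $\operatorname{conv}\{y_i : i \in I\} \cap \operatorname{conv}\{y_j : j \in J\} \neq \emptyset$. This is a short linear-algebra argument: the homogeneous system consisting of $\sum_{i=1}^m \lambda_i y_i = 0$ and $\sum_{i=1}^m \lambda_i = 0$ has $d+1$ equations in $m \geq d+2$ unknowns, hence a nonzero solution $(\lambda_i)$; setting $I := \{i : \lambda_i > 0\}$, $J := \{i : \lambda_i \leq 0\}$ and $\mu := \sum_{i \in I}\lambda_i = -\sum_{j\in J}\lambda_j > 0$, the point $q := \sum_{i\in I}(\lambda_i/\mu)\,y_i = \sum_{j\in J}(-\lambda_j/\mu)\,y_j$ is a convex combination of the $y_i$ with $i \in I$ and also of the $y_j$ with $j \in J$, while both $I$ and $J$ are nonempty because $(\lambda_i)\neq 0$ but $\sum_i\lambda_i = 0$.

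For the inductive step, suppose $n \geq d+2$ and that the statement holds for every family of $n-1$ convex sets. For each $k \in [n]$ the sub-family $\{C_i : i \neq k\}$ has $n - 1 \geq d+1$ members, every $d+1$ of which intersect, so by the inductive hypothesis I may choose a point $y_k \in \bigcap_{i \neq k} C_i$. Applying Radon's theorem to $y_1,\ldots,y_n$ yields a partition $[n] = I \sqcup J$ and a point $q \in \operatorname{conv}\{y_i : i \in I\} \cap \operatorname{conv}\{y_j : j \in J\}$. Then $q \in \bigcap_{k=1}^n C_k$: if $k \in I$, then every $y_j$ with $j \in J$ lies in $C_k$ (since $y_j \in \bigcap_{i\neq j}C_i$ and $k \neq j$), so by convexity $\operatorname{conv}\{y_j : j \in J\} \subseteq C_k$ and hence $q \in C_k$; the case $k \in J$ is symmetric, using $\operatorname{conv}\{y_i : i \in I\} \subseteq C_k$. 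This completes the induction.

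The only step carrying genuine content is Radon's theorem; everything after it is bookkeeping. Two minor points to watch: Radon's theorem must be set up for \emph{at least} $d+2$ points rather than exactly $d+2$ (so that it applies directly when $n > d+2$), and the indices with $\lambda_i = 0$ must all be pushed to one fixed side, so that $I$ and $J$ genuinely partition $[n]$ and the convexity argument in the inductive step goes through verbatim. For the use made of this result later in the paper only the planar case $d=2$ is needed, where the statement reads: $n \geq 3$ convex sets in $\mathbb{R}^2$ that meet three at a time possess a common point.
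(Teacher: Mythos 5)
Your argument is the standard and correct proof of Helly's theorem via Radon's partition theorem: the linear-algebra derivation of the Radon partition is sound, and the inductive step (choosing $y_k \in \bigcap_{i\neq k} C_i$ and observing that the Radon point lies in every $C_k$) is handled correctly, including the care taken to push the indices with $\lambda_i = 0$ to one side. Note, however, that the paper does not prove this statement at all --- it is quoted as a classical result with a citation to Danzer, Gr\"unbaum and Klee --- so there is no in-paper argument to compare against; your write-up simply supplies the standard proof that the paper takes for granted.
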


We require the following two special cases of \Cref{t:helly} where the set $\mathcal{C}$ contains only homothetic copies of the standard square $S$.

\begin{lemma}\label{l:sqhelly}
    Let $\mathcal{C} = \{S_1,\ldots, S_n\}$ be a set of pairwise-intersecting homothetic copies of $S$.
    Then $\bigcap_{i=1}^n S_i \neq \emptyset$.
\end{lemma}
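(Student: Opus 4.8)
The plan is to use the fact that a homothetic copy of $S$ is nothing more than an axis-aligned box, so that the two-dimensional problem splits into two one-dimensional ones. Concretely, write $S_i = I_i \times J_i$ with $I_i := [x_i - r_i,\, x_i + r_i]$ and $J_i := [y_i - r_i,\, y_i + r_i]$ the projections of $S_i$ onto the two coordinate axes.

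The first step is the elementary identity $S_i \cap S_j = (I_i \cap I_j) \times (J_i \cap J_j)$, which shows that two such squares intersect precisely when their projections intersect in each coordinate. Thus the hypothesis that $\mathcal{C}$ is pairwise-intersecting yields two families $\{I_1,\ldots,I_n\}$ and $\{J_1,\ldots,J_n\}$ of pairwise-intersecting compact intervals of $\mathbb{R}$. Applying \Cref{t:helly} with $d = 1$ to each family (the cases $n \le 2$ being immediate from the hypothesis) produces points $a \in \bigcap_{i=1}^n I_i$ and $b \in \bigcap_{i=1}^n J_i$, and then $(a,b) \in I_i \times J_i = S_i$ for every $i$, so $(a,b) \in \bigcap_{i=1}^n S_i$ as required.

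If one prefers to avoid invoking Helly's theorem altogether, the same conclusion follows by a direct extremal argument: set $a := \max_{i} (x_i - r_i)$ and $b := \max_{i}(y_i - r_i)$, attained at indices $k$ and $\ell$ respectively. Pairwise intersection of $S_k$ with $S_j$ forces $x_j - r_j \le a = x_k - r_k \le x_j + r_j$, and similarly $S_\ell \cap S_j \neq \emptyset$ forces $y_j - r_j \le b \le y_j + r_j$, whence $(a,b) \in S_j$ for all $j$. There is no substantial obstacle in this lemma: the only points needing (entirely routine) care are the coordinatewise description of box intersections and the degenerate small-$n$ cases; the genuine work of the section is deferred to the companion statement that follows.
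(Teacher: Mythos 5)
Your proof is correct and follows essentially the same route as the paper: project onto the two coordinate axes, observe that pairwise intersection of the squares gives pairwise intersection of the interval families, and apply the one-dimensional case of \Cref{t:helly} to each. The additional extremal argument (taking $a=\max_i(x_i-r_i)$, etc.) is a valid Helly-free alternative, but it does not change the substance.
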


\begin{proof}	
	Define $\pi_x, \pi_y : \mathbb{R}^2 \rightarrow \mathbb{R}$ to be the linear projections where $\pi_x(x,y) = x$ and $\pi_y(x,y) = y$ for each point $(x,y) \in \mathbb{R}^2$.
	Since the sets in $\mathcal{C}$ are pairwise-intersecting,
	so too are the sets in both $\{\pi_x(S_1),\ldots,\pi_x(S_n)\}$ and $\{\pi_y(S_1),\ldots,\pi_y(S_n)\}$.
	By \Cref{t:helly},
	there exists points $x',y' \in \mathbb{R}$ such that $x' \in \bigcap_{i=1}^n \pi_x(S_i)$ and $y' \in \bigcap_{i=1}^n \pi_y(S_i)$.
	Equivalently,
	given each square $S_i$ is of the form $[a_i,b_i] \times [c_i,d_i]$,
	we have $a_i \leq x' \leq b_i$ and $c_i \leq y' \leq d_i$.
	Hence $(x',y') \in S_i$ for each $i \in [n]$.
\end{proof}

\begin{lemma}\label{l:3squares}
	Let $P$ be a homothetic packing of $n$ squares.
	If $\{i,j,k\}$ is a clique in the contact graph $G=([n],E)$,
	then $S_i \cap S_j \cap S_k$ contains exactly one point.
\end{lemma}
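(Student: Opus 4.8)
The plan is to combine the Helly-type statement \Cref{l:sqhelly} with the triangle-freeness statement \Cref{mainlem0}. Since $\{i,j,k\}$ is a clique, the three squares $S_i,S_j,S_k$ are pairwise intersecting, so \Cref{l:sqhelly} (applied to this $3$-element family) gives $S_i\cap S_j\cap S_k\neq\emptyset$. Hence the intersection contains at least one point, and it remains only to rule out that it contains two distinct points.

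So suppose for contradiction that $u\neq v$ both lie in $S_i\cap S_j\cap S_k$. As an intersection of convex sets this set is convex, hence it contains the whole non-degenerate segment $L:=[u,v]$. The first claim is that $L$ lies on the boundary of each of the three squares. Indeed, since $P$ is a packing we have $S_i^\circ\cap S_j^\circ=\emptyset$, and since each square is the closure of its interior, any point of $S_i\cap S_j$ lying in $S_i^\circ$ would have an open neighbourhood contained in $S_i^\circ$ and therefore disjoint from $S_j^\circ$, contradicting that the point also lies in $S_j=\overline{S_j^\circ}$; thus $S_i\cap S_j\subseteq\partial S_i$, and cyclically. In particular $L\subseteq\partial S_i$. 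A non-degenerate segment contained in the boundary of an axis-parallel square must lie inside a single one of its four edges — a straight segment cannot turn a corner, and the four edges are pairwise non-collinear — so $L$ is either horizontal or vertical.

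Assume $L$ is vertical; the horizontal case is identical after rotating $P$ by $90^\circ$. Then $S_i\cap S_j\supseteq L$ contains a non-degenerate vertical segment, so it cannot equal a set of the form $\{(t,a):b\le t\le c\}$, i.e. the contact $\{i,j\}$ is not a $y$-direction contact; since $\{i,j\}\in E=E_x\cup E_y$, we get $\{i,j\}\in E_x$, and the same argument applied to the pairs $\{i,k\}$ and $\{j,k\}$ gives $\{i,k\},\{j,k\}\in E_x$. Hence $\{i,j,k\}$ is a triangle in $([n],E_x)$, contradicting \Cref{mainlem0}. Therefore $S_i\cap S_j\cap S_k$ consists of exactly one point. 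I expect the only points needing care to be the two elementary topological observations — that disjoint interiors force $S_i\cap S_j\subseteq\partial S_i$, and that a segment lying in a square's boundary is axis-parallel — and these are routine; if one prefers to avoid invoking \Cref{mainlem0}, the same contradiction can be reached directly by a pigeonhole argument, noting that among $S_i,S_j,S_k$ two squares must have the vertical segment $L$ on the same (left or right) edge, which immediately produces overlapping interiors.
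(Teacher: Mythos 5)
Your proof is correct and follows essentially the same route as the paper: \Cref{l:sqhelly} for non-emptiness and the triangle-freeness of the coloured subgraphs (\Cref{mainlem0}) for uniqueness, only arranged contrapositively --- you assume the triple intersection contains a non-degenerate (necessarily axis-parallel) segment and force all three contacts into one colour class, whereas the paper observes that one pair must be perpendicular to the other two and traps the triple intersection in the intersection of two perpendicular segments. Your closing pigeonhole remark is also valid and would let you bypass \Cref{mainlem0} entirely, but it is not needed.
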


\begin{proof}
	By relabelling vertices we may assume that $\{1,2,3\}$ is the clique in $G$.	
	By \Cref{l:sqhelly},
	the set $S_1 \cap S_2 \cap S_3$ is non-empty,
	hence it is sufficient to prove that $S_1 \cap S_2 \cap S_3$ contains at most one point.
	If any of the sets $S_1 \cap S_2$, $S_1 \cap S_3$ or $S_2 \cap S_3$ contain exactly one point then $S_1 \cap S_2 \cap S_3$ contains at most one point.
	Suppose instead that all three sets $S_1 \cap S_2$, $S_1 \cap S_3$ and $S_2 \cap S_3$ contain more than one point.
	Then each distinct pair $S_i,S_j$ has either an $x$- or $y$-direction contact, but not both.
	By \Cref{mainlem0},
	one of these distinct pairs has an $x$-direction contact and another distinct pair has a $y$-direction contact.
	As the intersection of two perpendicular line segments is either an empty set or a single point,
	the set $S_1 \cap S_2 \cap S_3$ contains at most one point.
\end{proof}

The previous two lemmas allow us to characterise the cliques in the contact graph of any given homothetic square packing.

\begin{lemma}\label{l:4share}
	Let $P$ be a homothetic packing of $n$ squares.
	Then any clique of the contact graph $G=([n],E)$ has size at most 4,
	and any clique of size 4 will correspond to 4 squares sharing a corner.
\end{lemma}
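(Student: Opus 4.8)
The plan is to combine the square version of Helly's theorem (\Cref{l:sqhelly}) with the triangle-freeness of the two colour classes (\Cref{mainlem0}). Let $K \subseteq [n]$ be a clique of $G$. We may assume $|K| \ge 4$, since otherwise there is nothing to prove. As the squares $\{S_i : i \in K\}$ are pairwise intersecting, \Cref{l:sqhelly} gives a point $q = (q_x,q_y) \in \bigcap_{i \in K} S_i$; write each square as $S_i = [a_i,b_i] \times [c_i,d_i]$.

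The key elementary observation I would record first is the following. If $\{i,j\}$ is an $x$-direction contact, then $S_i$ and $S_j$ meet along a vertical segment, so the $x$-coordinate of every point of $S_i \cap S_j$ — in particular $q_x$ — equals $b_i = a_j$ (after ordering so that $x_i < x_j$); hence $q_x \in \{a_i,b_i\}$ and $q_x \in \{a_j,b_j\}$. Symmetrically, a $y$-direction contact forces $q_y \in \{c_i,d_i\} \cap \{c_j,d_j\}$. Contrapositively, if $c_i < q_y < d_i$ then $S_i$ has no $y$-direction contact inside $K$, and likewise for the $x$-coordinate.

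Next I would argue that $q$ must be a corner of every $S_i$ with $i \in K$. Suppose instead that $c_i < q_y < d_i$ for some $i \in K$. By the observation, every edge of $K$ incident to $i$ is then an $x$-direction contact, i.e.\ lies in $E_x$. Since $([n],E_x)$ is triangle-free (\Cref{mainlem0}), no two neighbours of $i$ can be joined by an $E_x$-edge; but every pair in $K \setminus \{i\}$ is an edge of $G$, so all of those edges lie in $E_y$. As $|K \setminus \{i\}| \ge 3$, this produces a triangle in $([n],E_y)$, contradicting \Cref{mainlem0}. Hence $q_y \in \{c_i,d_i\}$ for every $i \in K$, and by the same argument (rotating $P$ by $90^\circ$) also $q_x \in \{a_i,b_i\}$; that is, $q$ is one of the four corners of $S_i$.

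To conclude, I would observe that two distinct squares of the packing cannot have $q$ as the same corner: if, say, $S_i$ and $S_j$ both had $q$ as their bottom-left corner, then the smaller of the two would be contained in the larger, so their interiors would overlap, contradicting the packing condition. Thus the squares of $K$ realise $q$ as pairwise distinct corners, of which there are only four, forcing $|K| \le 4$; and when $|K| = 4$ the four squares occupy the four corners at $q$, i.e.\ they share the corner $q$ in the sense of \Cref{fig:4share}. The only genuinely non-routine step is the monochromatic-clique argument in the third paragraph; everything else is direct bookkeeping with the position of $q$ relative to each square.
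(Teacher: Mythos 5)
Your proof is correct, but it takes a genuinely different route from the paper's. The paper also starts from \Cref{l:sqhelly}, but then invokes \Cref{l:3squares} to get a \emph{unique} common point $z$ and finishes with a continuous angle count: the interiors of the squares occupy pairwise disjoint angular sectors around $z$ of size $\pi$ (if $z$ is interior to a face) or $\pi/2$ (if $z$ is a corner), and since these sum to at most $2\pi$ one gets $k\le 4$ with equality forcing four corners. You instead rule out the face-interior case combinatorially: if $q$ were interior to a horizontal face of some $S_i$, all edges of $K$ at $i$ would be $x$-direction contacts, and triangle-freeness of both colour classes (\Cref{mainlem0}) applied to the remaining $\ge 3$ vertices yields a contradiction; then a pigeonhole on the four corner types (two squares cannot occupy the same corner at $q$ without overlapping interiors) gives $|K|\le 4$ and the \Cref{fig:4share} configuration in the equality case. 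Your version does not need \Cref{l:3squares} or the angle-measure argument and leans only on machinery already proved, at the cost of being somewhat longer; the paper's version is shorter and handles the face-interior case and the corner case uniformly through the single inequality $\sum_i \alpha_i \le 2\pi$. Both arguments are sound and reach the same conclusion.
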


\begin{proof}
	By relabelling the vertices,
	suppose that $\{1,\ldots,k\}$ is a clique of $G$ with $k \geq 4$.
	By \Cref{l:sqhelly,l:3squares},
	there exists a unique point $z$ in the intersections of the squares $S_1,\ldots,S_k$.
	Furthermore,
	$z$ must lie on the boundary of each square $S_1,\ldots,S_k$ by our assumption that $P$ is a homothetic square packing.
	The interior of each square $S_i$ covers an angle $\alpha_i$ of points around $z$.
	For each $i \in [k]$,
	we either have $\alpha_i = \pi$ and $z$ lies on exactly one face of $S_i$,
	or $\alpha_i = \pi/2$ and $z$ is a corner of $S_i$.
	As $\sum_{i=1}^k \alpha_i \leq 2 \pi$,
	we see that $k \leq 4$,
	with equality if and only if $S_1,S_2,S_3,S_4$ share a corner.
\end{proof}

Interestingly, no two distinct cliques of size 4 in the contact graph can share three vertices.

\begin{lemma}\label{l:cliques2share}
	Let $P$ be a homothetic packing of $n$ squares.
	If $K,K'$ are two distinct cliques of size 4 in $G=([n],E)$,
	then $|K \cap K'| \leq 2$.
\end{lemma}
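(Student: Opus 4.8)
The plan is to argue by contradiction. Suppose $K,K'$ are distinct size-4 cliques of $G$ with $|K\cap K'|=3$; after relabelling the vertices we may assume $K=\{1,2,3,4\}$ and $K'=\{1,2,3,5\}$ with $4\neq 5$ (so also $S_4\neq S_5$, since a packing cannot contain the same square under two distinct labels).

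The first step is to pin down where the two shared corners lie. By \Cref{l:4share}, the clique $K$ corresponds to four squares sharing a corner, so there is a point $z$ that is a corner of each of $S_1,S_2,S_3,S_4$ and lies in their common intersection; likewise there is a point $z'$ that is a corner of each of $S_1,S_2,S_3,S_5$ and lies in their common intersection. Both $z$ and $z'$ lie in $S_1\cap S_2\cap S_3$, which by \Cref{l:3squares} consists of exactly one point, so $z=z'$.

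The heart of the argument is then a quadrant-counting bookkeeping, borrowed directly from the proof of \Cref{l:4share}. Since $z$ is a corner of each of $S_1,\ldots,S_4$, the interior of each of these four squares subtends an angle of exactly $\pi/2$ around $z$ (each subtended angle is at least $\pi/2$ and the four of them sum to at most $2\pi$), and as these interiors are pairwise disjoint they partition the full $2\pi$ around $z$ into the four axis-aligned quadrants, one square per quadrant. In particular $S_4$ occupies precisely the single quadrant around $z$ that is not occupied by any of $S_1,S_2,S_3$. Applying the identical reasoning to the clique $K'$ shows that $S_5$ also occupies that same leftover quadrant. Hence $S_4^\circ$ and $S_5^\circ$ both contain every point of that open quadrant lying sufficiently close to $z$, so $S_4^\circ\cap S_5^\circ\neq\emptyset$, contradicting the definition of a homothetic square packing.

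I expect the only delicate point to be this last step: making precise that ``four squares sharing a corner'' forces the four squares into four distinct axis-aligned quadrants of the common corner, and hence that the three shared squares determine which quadrant the fourth must occupy. This is essentially already contained in the proof of \Cref{l:4share}, so there is no genuine computation involved — the work is purely in phrasing it cleanly, together with the routine observation that the equality case $\sum_i\alpha_i=2\pi$ over four corner-angles forces every $\alpha_i=\pi/2$.
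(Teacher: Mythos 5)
Your proof is correct and follows essentially the same route as the paper: relabel so the cliques share $\{1,2,3\}$, locate the two common corners $z,z'$, and invoke \Cref{l:3squares} to force $z=z'$. The paper then concludes in one line --- $z\in S_4\cap S_5$ makes $\{1,2,3,4,5\}$ a clique of size $5$, contradicting \Cref{l:4share} --- whereas your quadrant-counting finale, while valid, re-derives the equality case of the angle argument from \Cref{l:4share} instead of simply reusing its clique-size bound.
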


\begin{proof}
	By relabelling the vertices of $G$,
	we may suppose that $K = \{1,2,3,4\}$ and $K' = \{1,2,3,5\}$.
	As the sets $\{1,2,3,4\}$ and $\{1,2,3,5\}$ are cliques in $G$,
	the squares $S_1,S_2,S_3,S_4$ intersect at a point $z$ and the squares $S_1,S_2,S_3,S_5$ intersect at a point $z'$.
	Since $z , z' \in S_1 \cap S_2 \cap S_3$,
	it follows from \Cref{l:3squares} that $z = z'$.
	However this implies that $z \in S_4 \cap S_5$,
	contradicting that $G$ can contain no cliques of size 5 (\Cref{l:4share}).
\end{proof}

Before we can deduce more properties of the contact graph, we require the following technical result regarding the straight-line embedding of the contact graph;
i.e., the mapping of $G$ into the plane where a vertex $i$ is considered to be the point $p_i$ and an edge is considered to be the closed line segment
\begin{equation*}
	[p_i,p_j] :=\big\{ tp_i + (1-t)p_j : t \in [0,1] \big\}.
\end{equation*}

\begin{lemma}\label{l:lineseg}
	Let $P$ be a homothetic packing of $n$ squares.
	Choose any edge $\{i,j\} \in E$ and let $p_{ij} = (x_{ij},y_{ij})$ be the point described in \cref{eq:midpoint}.
	Then the following holds.
	\begin{enumerate}
		\item \label{l:lineseg1} The closed line segment $[p_i,p_j]$ is contained in $S_i \cup S_j$.
		\item \label{l:lineseg2} The set $[p_i,p_j ] \setminus \{p_{ij}\}$ is contained in the set  $S_i^\circ \cup S_j^\circ$ (and hence in the interior of $S_i \cup S_j$).
		\item \label{l:lineseg3} The point $p_{ij}$ lies in the interior of the set $S_i \cup S_j$ if and only if $\{i,j\} \notin E_x \cap E_y$.
	\end{enumerate}
\end{lemma}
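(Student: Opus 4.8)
The plan is to put the contact $\{i,j\}$ into a normal form using the symmetries of the problem, write down explicit coordinates for $S_i$, $S_j$ and $p_{ij}$, and then verify all three claims by elementary interval estimates. First, since rotating $P$ by $90^\circ$ swaps $E_x$ and $E_y$ and leaves all three statements unchanged, I may assume $\{i,j\} \in E_x$; then $r_i + r_j = |x_i - x_j| \ge |y_i - y_j|$, so $x_i \neq x_j$, and after possibly relabelling $i \leftrightarrow j$ and reflecting in a horizontal line, I may further assume $x_i < x_j$ (hence $x_j - x_i = r_i + r_j$) and $y_i \le y_j$. In this normal form $S_i$ and $S_j$ share a vertical segment of the line $x = x_i + r_i = x_j - r_j$, and \cref{eq:midpoint} simplifies to $p_{ij} = (x_i + r_i,\, y^\ast)$ with $y^\ast := \frac{r_j y_i + r_i y_j}{r_i + r_j}$, so that $y^\ast - y_i = \frac{r_i}{r_i+r_j}(y_j - y_i)$ and $y_j - y^\ast = \frac{r_j}{r_i+r_j}(y_j - y_i)$.

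For \cref{l:lineseg1,l:lineseg2} I would parametrise the segment as $q(t) = (q_1(t),q_2(t)) := (1-t)p_i + t p_j$ for $t \in [0,1]$ and note that $q_1(t) = x_i + r_i$ precisely at $t^\ast := \frac{r_i}{r_i+r_j}$, where $q(t^\ast) = p_{ij}$. On $[0,t^\ast]$ we have $q_1(t) \in [x_i, x_i+r_i]$, while the defining inequality $|y_i - y_j| \le r_i + r_j$ of an $x$-direction contact gives $|q_2(t) - y_i| = t\,|y_j - y_i| \le t^\ast(r_i+r_j) = r_i$; hence $q(t) \in S_i$, and both inequalities become strict for $t < t^\ast$, so then $q(t) \in S_i^\circ$. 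The mirror-image estimate on $[t^\ast, 1]$ gives $q(t) \in S_j$, strictly for $t > t^\ast$. Since $S_i^\circ \cup S_j^\circ \subseteq (S_i \cup S_j)^\circ$, this establishes both parts simultaneously.

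For \cref{l:lineseg3} I would split on whether $\{i,j\} \in E_x \cap E_y$, which in the normal form is exactly the equality $|y_i - y_j| = r_i + r_j$. If it fails, then $|y_i - y_j| < r_i + r_j$, and the two displayed identities give $|y^\ast - y_i| < r_i$ and $|y^\ast - y_j| < r_j$; I would then produce a small axis-aligned box $B$ around $p_{ij}$ such that every point of $B$ with first coordinate $\le x_i + r_i$ lies in $S_i$ and every point of $B$ with first coordinate $\ge x_j - r_j = x_i + r_i$ lies in $S_j$, whence $B \subseteq S_i \cup S_j$ and $p_{ij} \in (S_i \cup S_j)^\circ$. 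If instead $|y_i - y_j| = r_i + r_j$, then $y^\ast = y_i + r_i = y_j - r_j$, so $p_{ij}$ is simultaneously the top-right corner of $S_i$ and the bottom-left corner of $S_j$, and the points $(x_i + r_i - \varepsilon,\, y^\ast + \varepsilon)$ with $\varepsilon > 0$ lie above $S_i$ and to the left of $S_j$; these avoid $S_i \cup S_j$, so $p_{ij}$ is not interior.

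I expect the only delicate point to be the last paragraph: correctly identifying which faces or corner of $S_i$ and $S_j$ the point $p_{ij}$ occupies in the normal form, and choosing $B$ small enough that the two one-sided containments hold at once — its half-width must be smaller than each of $r_i - |y^\ast - y_i|$, $r_j - |y^\ast - y_j|$, $2r_i$ and $2r_j$, all of which are positive precisely because the contact is not of mixed type. Everything else is a routine, if slightly tedious, chase through intervals.
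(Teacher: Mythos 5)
Your proposal is correct and follows essentially the same route as the paper: elementary interval estimates along the two half-segments $[p_i,p_{ij}]$ and $[p_{ij},p_j]$ for parts \ref{l:lineseg1} and \ref{l:lineseg2}, an explicit axis-aligned neighbourhood of $p_{ij}$ when the contact is not of mixed type, and an explicit family of points escaping $S_i \cup S_j$ through the shared corner when it is. The only differences are cosmetic (you normalise the edge into $E_x$ with $x_i<x_j$, $y_i\le y_j$ up front and approach the corner along the direction $(-\varepsilon,\varepsilon)$ rather than $(t,-t)$), and your writing of the strictness claim in part \ref{l:lineseg2} is, if anything, slightly more careful than the paper's.
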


\begin{proof}
	Fix $p_i=(x_i,y_i)$ and $p_j=(x_j,y_j)$.
	Given a point $z = (x,y) \in [p_i,p_{ij}]$,
	we note that $|x - x_i| \leq r_i$ and $|y - y_i| \leq r_i$,
	with equality in one of these inequalities if and only if $z = p_{ij}$.
	An analogous observation can be made for any point in the line segment $[p_j,p_{ij}]$,
	hence \ref{l:lineseg1} and \ref{l:lineseg2} hold.
	It now suffices for us to check whether $p_{ij}$ lies in $(S_i \cup S_j)^\circ$.
	
	First suppose that $\{i,j\} \in E_x \cap E_y$.
	By rotating $P$ we may suppose that $x_i<x_j$ and $y_i<y_j$.
	For any $t > 0$, the point $p_{ij} + (t,-t)$ is not contained in $S_i \cup S_j$ as
	\begin{align*}
		|x_{ij} + t - x_i| = r_i + t > r_i  \qquad \text{and} \qquad |y_{ij} - t - y_j| = r_j + t > r_j.
	\end{align*}
	Hence $p_{ij}$ does not lie in $(S_i \cup S_j)^\circ$.
	
	Now suppose, without loss of generality, that $\{i,j\} \in E_x \setminus E_y$ and $x_i < x_j$.
	Fix 
	\begin{align*}
		\varepsilon := \min \left\{ r_i - \frac{r_i|y_i - y_j|}{r_i+r_j}, ~  r_j - \frac{r_j|y_i - y_j|}{r_i+r_j} \right\} > 0.
	\end{align*}
	Choose any $s \in (x_i - r_i, x_j + r_j)$ and $t \in (-\varepsilon,\varepsilon)$.
	Then the point $p_{ij} + (s,t)$ lies in $S_i \cup S_j$ as $x_i + r_i = x_j - r_j$ and
	\begin{align*}
		|y_{ij} + t - y_i| &\leq |t| + \left| \frac{r_i(y_i - y_j)}{r_i + r_j} \right| \leq r_i, \\
		|y_{ij} + t - y_j| &\leq |t| + \left| \frac{r_j(y_i - y_j)}{r_i + r_j} \right| \leq r_j.
	\end{align*}
	As this holds for any choice of $s,t$,
	the point $p_{ij}$ lies in $(S_i \cup S_j)^\circ$.
\end{proof}

If four squares do share a corner,
then the straight-line embedding of $G$ given by the centres $p_1,\ldots,p_n$ will not be planar. Fortunately, this is the only way that planarity can be lost.

\begin{lemma}\label{l:4squares}
	Let $P$ be a homothetic packing of $n$ squares.
	Then the following properties hold for any pair of edges $\{i,j\},\{k,\ell\} \in E$ that share no vertices.
	\begin{enumerate}
		\item \label{l:4squares1} The interiors of the sets $S_i \cup S_j$ and $S_k \cup S_\ell$ are disjoint.
		\item \label{l:4squares3} If the closed line segments $[p_i,p_j]$ and $[p_k,p_\ell]$ intersect, 
		then the squares $S_i,S_j,S_k,S_\ell$ share a corner, the edges $\{i,j\},\{k,\ell\}$ lie in both $E_x$ and $E_y$,
		and the edges $\{i,k\},\{i,\ell\},\{j,k\},\{j,\ell\}$ lie in the symmetric difference of $E_x$ and $E_y$ (denoted by $E_x \triangle E_y$).
	\end{enumerate}
\end{lemma}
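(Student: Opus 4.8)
The plan is to prove the two parts in order, with part (i) furnishing the point-set input needed for part (ii).

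For part (i) I would argue by contradiction from a point $z\in(S_i\cup S_j)^\circ\cap(S_k\cup S_\ell)^\circ$, via an angular count around $z$ of the kind used in the proof of \Cref{l:4share}. For a square $S_a$ containing $z$, the set of directions in which a short ray from $z$ stays inside $S_a$ is a closed arc of angular measure $2\pi$, $\pi$ or $\pi/2$ according as $z$ is interior to $S_a$, on the relative interior of an edge of $S_a$, or a corner of $S_a$ (and the arc is empty if $z\notin S_a$); moreover the arcs of two distinct squares overlap in measure zero, since their interiors are disjoint and $S_a\cap S_b$ is at most a segment. As a ball about $z$ lies in $S_i\cup S_j$, the two arcs cover the whole circle, so their measures sum to $2\pi$; the only possibilities are that one of them equals $2\pi$ (so $z$ is interior to $S_i$ or $S_j$) or that both equal $\pi$. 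If $z\in S_i^\circ$, say, then $z$ also lies in $S_k$ or in $S_\ell$, and no square distinct from $S_i$ can meet $S_i^\circ$ — a contradiction. In the remaining case $S_i$ and $S_j$ locally coincide with complementary closed half-planes across a line $L\ni z$, and likewise $S_k,S_\ell$ across a line $L'\ni z$; if $L\neq L'$ then one of the open quadrants cut out near $z$ lies in $S_i^\circ$ or $S_j^\circ$ and simultaneously in $S_k^\circ$ or $S_\ell^\circ$, while if $L=L'$ then one of $S_k,S_\ell$ shares a half-plane with one of $S_i,S_j$; either way two distinct square interiors meet, contradicting that $P$ is a packing. (The subcase $z\notin S_i$ reduces to $z\in S_j^\circ$, since a small ball about $z$ missing $S_i$ must lie inside $S_j$.)

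For part (ii), suppose $z\in[p_i,p_j]\cap[p_k,p_\ell]$. I would first show $z=p_{ij}$: otherwise \Cref{l:lineseg}\ref{l:lineseg2} gives $z\in S_i^\circ\cup S_j^\circ$, say $z\in S_i^\circ$, while \Cref{l:lineseg}\ref{l:lineseg1} gives $z\in S_k\cup S_\ell$, say $z\in S_k$; as $i\neq k$ this again forces a square distinct from $S_i$ to meet $S_i^\circ$, which is impossible. The symmetric argument yields $z=p_{k\ell}$, so $z\in S_i\cap S_j\cap S_k\cap S_\ell$; in particular every pair among $\{i,j,k,\ell\}$ is a contact, so $\{i,j,k,\ell\}$ is a clique of size $4$, and by \Cref{l:4share} the four squares share a corner, which must be $z$ (the unique common point, by \Cref{l:3squares}). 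To read off the colours, note that $z=p_{ij}$ being a corner of $S_i$ means, via \cref{eq:midpoint}, that $r_i|x_i-x_j|/(r_i+r_j)=r_i$ and $r_i|y_i-y_j|/(r_i+r_j)=r_i$, i.e.\ $|x_i-x_j|=|y_i-y_j|=r_i+r_j$, so $\{i,j\}\in E_x\cap E_y$, and likewise $\{k,\ell\}\in E_x\cap E_y$. Around the common corner $z$ the four squares each subtend $\pi/2$ and together fill $2\pi$, so they occupy the four axis-aligned quadrants at $z$ in some cyclic order; since $S_i\cap S_j=\{z\}$ and $S_k\cap S_\ell=\{z\}$, the pairs $\{S_i,S_j\}$ and $\{S_k,S_\ell\}$ are the two diagonal pairs, whence each of $S_i,S_j$ occupies a quadrant adjacent to each of $S_k,S_\ell$. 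Two squares in adjacent quadrants at $z$ meet in an edge of positive length issuing from $z$, so their contact is not a single point, which by the classification of contacts in \Cref{sec:basic} means the corresponding edge lies in $E_x\triangle E_y$; this gives $\{i,k\},\{i,\ell\},\{j,k\},\{j,\ell\}\in E_x\triangle E_y$ and completes the proof.

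I expect the delicate part to be the point-set topology underlying part (i): one must remember that $(S_a\cup S_b)^\circ$ is strictly larger than $S_a^\circ\cup S_b^\circ$ (it also contains the relative interior of a shared face), so the contradiction has to be extracted from the angular data rather than from disjointness of the open squares alone, and the arc bookkeeping must be watertight. Once part (i) is established, part (ii) amounts to invoking \Cref{l:lineseg}, \Cref{l:3squares} and \Cref{l:4share} in the right sequence together with the corner computation from \cref{eq:midpoint}.
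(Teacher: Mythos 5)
Your proof is correct, but it reaches the conclusion by a genuinely different route in two places. For part \ref{l:4squares1}, the paper writes $(S_i\cup S_j)^\circ = S_i^\circ\cup S_j^\circ\cup R_{ij}$ with $R_{ij}$ the relative interior of the contact segment, takes a small open $\varepsilon$-neighbourhood of $R_{ij}$ inside $(S_i\cup S_j)^\circ$, and derives a contradiction from the fact that the null sets $R_{ij},R_{k\ell}$ cannot account for a positive-area overlap; you instead run a local angular count around a putative common interior point $z$, in the same spirit as the paper's own proof of \Cref{l:4share}. Both are sound: the paper's version is quicker to state once one accepts the measure-theoretic bookkeeping, while yours is more elementary and has the bonus of pinning down the local structure (complementary half-planes across a common line), though you do quietly use that for each direction the initial segment of a ray from $z$ lies entirely in one of the two squares --- a one-line convexity argument worth making explicit. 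For part \ref{l:4squares3}, the identification of the crossing point with $p_{ij}=p_{k\ell}$ and the appeal to \Cref{l:4share} match the paper, but the colour claims diverge: the paper gets $\{i,j\},\{k,\ell\}\in E_x\cap E_y$ from \Cref{l:lineseg}\ref{l:lineseg3} combined with part \ref{l:4squares1}, and gets the cross edges into $E_x\triangle E_y$ by observing that $\{i,k\}\in E_x\cap E_y$ would create a monochromatic triangle contradicting \Cref{mainlem0}; you instead compute directly from \cref{eq:midpoint} that $p_{ij}$ being a corner of $S_i$ forces $|x_i-x_j|=|y_i-y_j|=r_i+r_j$, and read off the cross-edge colours from the quadrant arrangement at the shared corner (diagonal pairs meet in a point, adjacent pairs meet in a positive-length segment). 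Your quadrant argument is a nice purely geometric alternative to the combinatorial triangle-freeness step, and everything you assert checks out against the contact classification in \Cref{sec:basic}.
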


\begin{proof}
	\ref{l:4squares1}:
	Suppose for contradiction that the interiors of the sets $S_i \cup S_j$ and $S_k \cup S_\ell$ are not disjoint.
	Let $R_{ij}$ and $R_{k\ell}$ be the relative interiors of the convex sets $S_i \cap S_j$ and $S_k \cap S_\ell$ respectively.
	Then $(S_i \cup S_j)^\circ = S_i^\circ \cup S_j^\circ \cup R_{ij}$ and $(S_k \cup S_\ell)^\circ = S_k^\circ \cup S_\ell^\circ \cup R_{k\ell}$.
	Since the interiors of the squares in $P$ are pairwise disjoint,
	the sets $S_i^\circ \cup S_j^\circ$ and $S_k^\circ \cup S_\ell^\circ$ must be disjoint.
	Hence we have, without loss of generality, that $R_{ij} \neq \emptyset$ and $R_{ij} \cap (S_k \cup S_\ell)^\circ \neq \emptyset$.
	By rotating and translating $P$ if necessary,
	we may further assume that $R_{ij} = \{(s,0) \in \mathbb{R}^2 : a < s <b \}$ for some $a,b \in \mathbb{R}$ with $a < b$.
	Choose a sufficiently small scalar $\varepsilon >0$ such that the open neighbourhood
	\begin{align*}
		R_{ij}^\varepsilon := \left\{ (s,t) \in \mathbb{R}^2 : ~ a < s <b, ~ -\varepsilon < t < \varepsilon \right\}
	\end{align*}
	of $R_{ij}$ is contained in $(S_i \cup S_j)^\circ$.
	Since $R_{ij}$ intersects $(S_k \cup S_\ell)^\circ$ non-trivially and non-empty open sets always have positive area,\footnote{As all sets in the plane mentioned throughout the paper are Lebesgue-measurable, we define the area of a set by using the Lebesgue measure on $\mathbb{R}^2$.}
	the intersection of the open sets $R_{ij}^\varepsilon$ and $(S_k \cup S_\ell)^\circ$ is a non-empty open set with positive area.
	Since both $R_{ij}$ and $R_{k\ell}$ are null sets (i.e., have zero area),
	it follows that $R^\varepsilon_{ij} \setminus R_{ij}$ (and hence $S_i^\circ \cup S_j^\circ$) intersects $S_k^\circ \cup S_\ell^\circ$ non-trivially,
	thus forcing a contradiction.
	
	\ref{l:4squares3}:
	By \Cref{l:lineseg}\ref{l:lineseg1} we have that $[p_i,p_j] \subset S_i \cup S_j$ and $[p_k,p_\ell] \subset S_k \cup S_\ell$.
	By \Cref{l:lineseg}\ref{l:lineseg2} and \Cref{l:lineseg}\ref{l:lineseg3},
	$\{i,j\} \in E_x \triangle E_y$ if and only if $[p_i,p_j]$ is contained in the interior of the set $S_i \cup S_j$,
	and $\{k,\ell\} \in E_x \triangle E_y$ if and only if $[p_k,p_\ell]$ is contained in the interior of the set $S_k \cup S_\ell$.
	As shown in \ref{l:4squares1},
	the interiors of $S_i \cup S_j$ and $S_k \cup S_\ell$ are disjoint.
	Since $[p_i,p_j]$ and $[p_k,p_\ell]$ are not disjoint,
	it follows that $\{i,j\}, \{k,\ell\} \in E_x \cap E_y$.
	By \Cref{l:lineseg}\ref{l:lineseg2} and \Cref{l:lineseg}\ref{l:lineseg3}, 
	$p_{ij}$ (see \cref{eq:midpoint}) is the unique point in $[p_i,p_j]$ not contained in the interior of $S_i \cap S_j$.
	Similarly,
	$p_{k\ell}$ is the unique point in $[p_k,p_\ell]$ not contained in the interior of $S_k \cap S_\ell$.
	Since $[p_i,p_j]$ and $[p_k,p_\ell]$ intersect non-trivially but the interiors of $S_i \cup S_j$ and $S_k \cup S_\ell$ do not,
	we have $p_{ij}=p_{k\ell}$.
	Hence $\{i,j,k,\ell\}$ is a clique in $G$.
	By \Cref{l:4share},
	the squares $S_i,S_j,S_k,S_\ell$ share a corner.
	
	Suppose for contradiction that $\{i,k\} \in E_x \cap E_y$.
	Then $(i,j,k)$ is a cycle of length 3 in either $([n],E_x)$ or $([n],E_y)$,
	contradicting \Cref{mainlem0}.
	Hence $\{i,k\} \in E_x \triangle E_y$.
	By repeating the above argument we see that the edges $\{i,\ell\},\{j,k\},\{j,\ell\}$ also lie in $E_x \triangle E_y$,
	thus completing the proof.
\end{proof}

Interestingly, \Cref{l:4squares} also allows us to develop an easy-to-obtain upper bound for the maximum number of contacts possible in any homothetic square packing.
It is worth mentioning that the upper bound is not best possible;
indeed, the author believes that $4n - 6\sqrt{n} +2$ is the best-possible upper bound for the maximum number contacts for a homothetic packing of $n$ squares (the construction to obtain this upper bound is outlined in \Cref{sec:intro}).
As far as the author is aware, the upper bound given below is the lowest known upper bound.

\begin{proposition}\label{p:upperbound}
	Any homothetic packing of $n \geq 3$ squares has at most $4n-8$ contacts.
\end{proposition}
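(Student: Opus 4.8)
The plan is to show that the contact graph $G$ is only a bounded number of edges away from being planar. Concretely, I would view the straight-line embedding of $G$ (drawing vertex $i$ at the centre $p_i$ and edge $\{i,j\}$ as the segment $[p_i,p_j]$), delete one suitably chosen edge from each clique of size $4$ to obtain a genuinely planar graph, and then bound the number of $4$-cliques by $n-2$ using the extremal squares of the packing. Let $t$ denote the number of cliques of size $4$ in $G$, which by \Cref{l:4share} equals the number of quadruples of squares of $P$ sharing a corner.

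For the planarity reduction I would use \Cref{l:lineseg,l:4squares}. By \Cref{l:4squares}\ref{l:4squares3}, two segments $[p_i,p_j]$ and $[p_k,p_\ell]$ with $\{i,j\}\cap\{k,\ell\}=\emptyset$ meet only when $\{i,j,k,\ell\}$ is a clique, and then $\{i,j\}$ and $\{k,\ell\}$ are exactly the two edges of that clique lying in $E_x\cap E_y$ (the ``diagonals''). Moreover no centre $p_m$ with $m\notin\{i,j\}$ lies on $[p_i,p_j]$, and no two edges incident to a common vertex overlap outside that vertex: both follow from $[p_i,p_j]\subseteq S_i\cup S_j$ (\Cref{l:lineseg}\ref{l:lineseg1}) together with $p_m\notin S_i\cup S_j$ for $m\ne i,j$, since a small ball around $p_m$ lies in $S_m^\circ$, which is disjoint from $S_i^\circ$ and $S_j^\circ$. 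A diagonal edge $\{i,j\}\in E_x\cap E_y$ has $S_i\cap S_j=\{p_{ij}\}$, so the corner shared by any $4$-clique having $\{i,j\}$ as a diagonal is forced to be $p_{ij}$; since four squares meeting at a point already exhaust the angle $2\pi$ around it, no point is the shared corner of five squares, so distinct $4$-cliques have distinct shared corners, and hence each edge is the diagonal of at most one $4$-clique. Therefore deleting one diagonal from each $4$-clique removes $t$ distinct edges and leaves a crossing-free straight-line drawing with no vertex on a non-incident edge; the resulting simple graph $G'$ is planar, and for $n\ge 3$ this yields $|E|-t=|E(G')|\le 3n-6$.

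It remains to prove $t\le n-2$. I would double count incidences between squares and $4$-cliques. Each $4$-clique $C$ has a shared corner $z_C$, which by \Cref{l:4share} is a genuine corner of each of the four squares of $C$; since a square has only four corners and distinct $4$-cliques have distinct shared corners, each square lies in at most four $4$-cliques. Writing $u_i$ for the number of corners of $S_i$ that are not the shared corner of any $4$-clique, this refines to $4t=\sum_{i=1}^n|\{C:i\in C\}|\le 4n-\sum_{i=1}^n u_i$. To bound $\sum_i u_i$ from below I would use extremal squares: if $S_a$ maximises $x_a+r_a$, then neither the top-right nor the bottom-right corner of $S_a$ can be the shared corner of a $4$-clique, because a fourth square occupying the outward quadrant at such a point would have its right edge strictly to the right of that of $S_a$; rotating by $90^\circ,180^\circ,270^\circ$ gives the analogous statement for a leftmost, a topmost and a bottommost square. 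A short case analysis over how these four extremal squares coincide — using also that no single square can be extremal in all four directions when $n\ge 2$, since that would force every other square to be contained in it — then shows $\sum_i u_i\ge 6$, whence $t\le n-2$. Combining, $|E|\le 3n-6+t\le 4n-8$ (when $n=3$ we have $t=0$ trivially).

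The step I expect to be the main obstacle is the last case analysis: one must verify that for every pattern of agreement among the rightmost, leftmost, topmost and bottommost squares at least six distinct corners are forced to be ``free'' (e.g.\ a single square extremal in three directions loses all four of its corners, while the square extremal in the fourth direction loses two more), and that the only pattern which would fail — a single square extremal in all four directions — is geometrically impossible. Everything before that is a routine synthesis of \Cref{l:lineseg,l:4share,l:4squares} with Euler's formula and should go through without difficulty. (As an aside, the planarity reduction above in fact shows that the straight-line embedding of $G$ is a $1$-planar drawing, so one could alternatively invoke the classical bound of $4n-8$ edges for simple $1$-planar graphs; I prefer the self-contained argument.)
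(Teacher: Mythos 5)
Your proposal is correct, and the first half — deleting one diagonal (an edge of $E_x\cap E_y$) from each $4$-clique to obtain a planar straight-line drawing, hence $|E|-t\le 3n-6$ — is exactly the paper's reduction. Where you genuinely diverge is in proving $t\le n-2$. The paper observes that removing $e_K$ turns each $4$-clique $K$ into a pair of facial triangles of the embedded graph $G'$, invokes \Cref{l:cliques2share} to see that these $2t$ triangles are pairwise distinct, and then applies Euler's formula (at most $2n-4$ triangular faces), giving $2t\le 2n-4$. You instead double count square--clique incidences: each $4$-clique's shared corner is a genuine corner of each of its four squares, distinct cliques have distinct shared corners (your angle argument around the corner, which also substitutes for \Cref{l:cliques2share}), so $4t\le 4n-\sum_i u_i$, and the extremal squares in the four axis directions force $\sum_i u_i\ge 6$, whence $t\le n-2$ by integrality. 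Both arguments are sound; the paper's is shorter once the facial-triangle bookkeeping is set up, while yours is purely local and combinatorial, needs no facial structure or Euler's formula, and the case analysis you flag as the main risk does close (every coincidence pattern of the four extremal squares yields at least six free corners, and a square extremal in all four directions would contain every other square, which is impossible for $n\ge 2$). Your closing remark that the drawing is $1$-planar, so the classical $4n-8$ bound for $1$-planar graphs applies directly, is also a valid shortcut not taken by the paper.
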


\begin{proof}
	Let $P$ be a homothetic square packing with contact graph $G=([n],E)$ and centres $p_1,\ldots,p_n$.
	Let $A$ be the set of cliques of size 4 contained in $G$.
	Choose any clique $K=\{i,j,k,\ell\} \in A$.
	By \Cref{l:4share},
	the squares $S_i,S_j,S_k,S_\ell$ all share a corner.
	Without loss of generality we may suppose $\{i,j\},\{k,\ell\}$ are the unique edges supported on $K$ that lie in $E_x \cap E_y$.
	Note that if we remove the edge $\{i,j\}$,
	the two cliques $\{i,k,\ell\}$ and $\{j,k,\ell\}$ will now form facial triangles in the straight-line embedding given by the centres of $P$.
	For each clique $K$ we now choose an edge $e_K \in E_x \cap E_y$ supported on $K$ and let $T_K,T'_K$ be the two corresponding facial triangles that result from the removal of $e_K$.
	Note that for two distinct cliques $K,K' \in A$,
	the triangles $T_K,T'_K,T_{K'},T'_{K'}$ must all be pairwise distinct by \Cref{l:cliques2share}.
	Define the subgraph $G' = ([n],E')$ by setting $E' := E \setminus \{e_K :K \in A\}$.
	By \Cref{l:4squares}\ref{l:4squares3},
	$G'$ is planar and the straight-line embedding given by the vertex map $i \mapsto p_i$ is a planar embedding.
	Furthermore,
	for every $K \in A$ the triangles $T_K$ and $T'_K$ are facial triangles of $G'$ with respect to the aforementioned straight-line embedding.
	It follows from Euler's formula that any embedding of a planar graph with $n$ vertices has at most $2n-4$ triangle faces.
	As every edge $e_K$ pairs up two triangles of the embedded graph $G'$ and no triangle is paired up more than once,
	we see that $|A| \leq n-2$.
	Since $G'$ is planar, it has at most $3n-6$ edges.
	Thus $|E| =|E'| + |A| \leq (3n-6) + (n-2) = 4n-8$ as required.
\end{proof}

Before we close the section, we first mention the following interesting result of Schramm.

\begin{theorem}[\cite{schramm93}]\label{t:schramm}
	Let $G=([n],E)$ be a planar graph with a planar embedding where every interior face is a triangle, the outer cycle of the embedded graph has length 4, and the only cycle of length at most 4 that contains vertices inside its interior is the outer cycle.
	Then $G$ is the contact graph of a homothetic square packing $P$.
\end{theorem}

Whilst being a very interesting result with a particularly beautiful proof -- for example, the proof involves observing a correspondence between square tilings and a concept known as an extremal metric -- we unfortunately cannot utilise \Cref{t:schramm} because of two important reasons.
Firstly,
\Cref{t:schramm} only applies to a very specific family of contact graphs:
planar graphs with $3n-7$ edges and no ``small'' cycles containing a vertex in their interior.
This assumption can be weakened to allow for any planar graphs with $3n-7$ edges, however the cost of doing so is that some squares could potentially have a radius of zero (something we are explicitly not allowing).
Secondly,
the union of the squares in the corresponding homothetic square packing will form a rectangle,
and so it is fairly easy to see that the radii cannot satisfy condition \ref{mainthm2} of \Cref{mainthm}.

\section{The weak generic condition}\label{sec:cycle}

We begin the section with the following definition.

\begin{definition}\label{def:wgc}
	The radii of a homothetic square packing are said to satisfy the \emph{weak generic condition} if they satisfy condition \ref{mainthm2} of \Cref{mainthm}.
\end{definition}

It should be noted that almost all choices of radii will satisfy the weak generic condition;
indeed it is sufficient that the radii form an algebraically independent set.
In this section we use the weak generic condition to determine certain properties about cycles in the coloured graphs $([n],E_x)$ or $([n],E_y)$.
We first require the following technical lemma.

\begin{lemma}\label{l:zigzag}
	Let $P$ be a homothetic packing of $n \geq 6$ squares.
	Suppose that the radii satisfy the weak generic condition.
	Further suppose that there exists a cycle $(n_1,\ldots,n_k)$ in the coloured subgraph $([n],E_x)$ such that for each $i \in [k]$, neither $x_{n_{i-1}} < x_{n_{i}} < x_{n_{i+1}}$ nor $x_{n_{i+1}} < x_{n_{i}} < x_{n_{i-1}}$ (here we set $x_{n_0} = x_{n_k}$ and $x_{n_{k+1}} = x_{n_1}$).
	Then $k = 4$ and the four squares $S_{n_1},S_{n_2},S_{n_3},S_{n_4}$ share a corner.
\end{lemma}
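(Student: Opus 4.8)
The plan is to reduce the zigzag cycle to a configuration of squares strung out along a single vertical line, extract two "interior‑disjoint'' matchings on the contact points of the cycle, and apply a simple uniqueness fact about such matchings; the weak generic condition is needed only to kill the long degenerate cases.

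\textbf{Step 1 (the common vertical line).} For each $i$, the cycle edge $\{n_i,n_{i+1}\}\in E_x$ gives $x_{n_{i+1}}-x_{n_i}=\varepsilon_i(r_{n_i}+r_{n_{i+1}})$ for some $\varepsilon_i\in\{-1,1\}$, and positivity of the radii forces $x_{n_i}\neq x_{n_{i+1}}$. The hypothesis says $x_{n_i}$ is never strictly between $x_{n_{i-1}}$ and $x_{n_{i+1}}$, so each $n_i$ is a strict local maximum or strict local minimum of $i\mapsto x_{n_i}$ around the cycle; consecutive vertices cannot be of the same type, so the types alternate and $k$ is even (hence $k\ge4$). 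If $n_i$ is a local maximum then both contacts of the cycle at $n_i$ lie along the left face of $S_{n_i}$, i.e.\ the line $x=x_{n_i}-r_{n_i}$; if $n_i$ is a local minimum they lie along its right face $x=x_{n_i}+r_{n_i}$. Propagating this identity around the cycle, I would conclude that all $k$ cycle edges lie on one vertical line $\ell=\{x=c\}$, that every local‑maximum square lies in $\{x\ge c\}$ and every local‑minimum square in $\{x\le c\}$, and that each cycle square $S_{n_i}$ meets $\ell$ in a segment $\{c\}\times I_{n_i}$, where $I_{n_i}$ is an interval of length $2r_{n_i}>0$.

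\textbf{Step 2 (two interior‑disjoint matchings).} Let $q_i=p_{n_i n_{i+1}}$ be the point of \cref{eq:midpoint}, i.e.\ where $[p_{n_i},p_{n_{i+1}}]$ meets $\ell$; by \Cref{l:lineseg} this portion of the straight‑line embedding lies in $S_{n_i}\cup S_{n_{i+1}}$, so $q_i\in S_{n_i}\cap S_{n_{i+1}}$, and hence $q_{i-1},q_i\in I_{n_i}$ for all $i$. Since the squares have disjoint interiors, $I_{n_i}$ and $I_{n_j}$ have disjoint interiors whenever $n_i,n_j$ lie on the same side of $\ell$ (a common interior point would give a common interior point of the two squares just off $\ell$). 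Hence the $k/2$ intervals $[q_{i-1},q_i]\subseteq I_{n_i}$ taken over the local‑minimum indices $n_i$ are pairwise interior‑disjoint, and as an index matching they pair up $q_1,\dots,q_k$ completely; call it $M$, and let $M'$ be the analogous matching coming from the local‑maximum squares, so that $M\cup M'$ is exactly the $k$‑cycle $q_1q_2\cdots q_k$. The elementary fact I would use is: a perfect matching of $k$ pairwise‑distinct points on a line into pairwise interior‑disjoint intervals is unique — each point must be matched to a neighbour, and one inducts. So if $q_1,\dots,q_k$ were pairwise distinct then $M=M'$, whence $M\cup M'$ has only $k/2<k$ edges, a contradiction; therefore at least two of the $q_i$ coincide.

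\textbf{Step 3 (finishing).} For $k=4$: ordering the two interior‑disjoint local‑minimum shadows as $I_1$ below $I_3$ and the two local‑maximum shadows as $I_2$ below $I_4$, non‑emptiness of $I_1\cap I_2,\ I_2\cap I_3,\ I_3\cap I_4,\ I_4\cap I_1$ forces the chain $\min I_3\le\max I_2\le\min I_4\le\max I_1\le\min I_3$, so all four shadows share a common endpoint $\beta$; then $(c,\beta)$ lies in all four squares, $\{n_1,n_2,n_3,n_4\}$ is a clique, and \Cref{l:4share} gives that $S_{n_1},\dots,S_{n_4}$ share a corner. It remains to rule out $k\ge6$: here some $q_i=q_j$ with $i,j$ cyclically non‑adjacent, so the four (distinct) squares $S_{n_i},S_{n_{i+1}},S_{n_j},S_{n_{j+1}}$ all contain this point; by \Cref{l:4share} they share a corner, and by \Cref{l:4squares}\ref{l:4squares3} the cycle edges $\{n_i,n_{i+1}\}$ and $\{n_j,n_{j+1}\}$ lie in $E_x\cap E_y$, i.e.\ are corner contacts. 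I would then exploit Step 1: a cycle edge that is a corner contact pins $q_s$ as the top or bottom endpoint of the shadows $I_{n_s}$ and $I_{n_{s+1}}$, turning $|y_{n_s}-y_{n_{s+1}}|\le r_{n_s}+r_{n_{s+1}}$ into an equality; chasing these equalities around the cycle, and around the shared‑corner configurations (using the quadrant structure of four squares meeting at a point), yields a non‑trivial linear relation among the $r_{n_i}$ with all coefficients in $\{-1,0,1\}$ — concretely one is forced into an equation of the shape $r_a=r_b$ (a relation $r_a+r_b=0$ or $\sum r_{n_i}=0$ being impossible outright). Since $n\ge6$, such a relation has at least four vanishing coefficients and a nonzero sign function, contradicting the weak generic condition (\Cref{def:wgc}); hence $k=4$.

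\textbf{Main obstacle.} Steps 1 and 2 are essentially bookkeeping plus one clean combinatorial observation, so I expect the real work to be the case $k\ge6$ in Step 3: making precise how a single coincidence of two contact points inside a long zigzag cycle forces enough further corner contacts — and hence enough tight equalities — to produce the forbidden relation among the radii, rather than merely "pinching'' the cycle at one shared corner and then threading off harmlessly. I would expect this to need a careful analysis of how the cycle passes through the shared‑corner configurations, together with the observation that $n\ge6$ is exactly the threshold at which "two radii are equal'' violates condition \ref{mainthm2}.
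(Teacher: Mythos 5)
Your Steps 1 and 2 are sound and in fact coincide with the paper's own set-up: the paper translates the packing so that $x_{n_i}=(-1)^ir_{n_i}$, which is precisely your ``common vertical line'', and its \cref{opt0} is exactly your interior-disjointness of same-side shadows. Your matching-uniqueness observation is a clean way to see that two of the contact points $q_i$ must coincide, and your $k=4$ analysis is complete and correct.

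The genuine gap is the case $k\ge 6$, which is where almost all of the paper's work lies and which you have only sketched. Two concrete problems. First, Step 2 only yields that \emph{some} two $q_i$ coincide, while Step 3 silently assumes the coinciding pair is cyclically non-adjacent so that \Cref{l:4squares}\ref{l:4squares3} applies to two vertex-disjoint cycle edges. If instead $q_i=q_{i+1}$ — which your argument does not exclude, since the common point could simply be the shared endpoint of the interior-disjoint shadows $I_{n_i}$ and $I_{n_{i+2}}$ — you obtain only a triangle in $G$, no clique of size 4, and no pair of disjoint edges to feed into \Cref{l:4squares}. Second, the assertion that ``chasing these equalities around the cycle \dots yields a relation of the shape $r_a=r_b$'' is a plan, not an argument: you do not specify which equalities propagate past the shared corner, why the corner configuration forces further tight contacts along the rest of the cycle, or why the outcome is a genuine $\{-1,0,1\}$-relation with at least four zero coefficients rather than a consistent system of inequalities. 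You flag this yourself as the ``real work'', and it is: the paper's proof spends its entire second half here, ordering the cycle so that $y_{n_1}$ is minimal, classifying the five admissible overlap patterns of adjacent shadows (invoking the weak generic condition once to exclude identical shadows), pinning down the exact configuration of $S_{n_1},S_{n_2},S_{n_3},S_{n_k}$ in \cref{ineq}, and then playing the positions of $n_4$ and $n_{k-1}$ against each other to reach the impossible inequality $y_{n_{k-1}}+r_{n_{k-1}}\le y_{n_{k-1}}-r_{n_{k-1}}$. Until you supply an argument of comparable precision for $k\ge 6$, the lemma is not proved.
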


\begin{proof}	
	We first note that for the $x$-coordinates of the cycle $(n_1,\ldots,n_k)$ to have the required ``zigzagging'' property,
	$k$ must be even (and hence $k \geq 4$).
	By translating and reflecting $P$,
	we may assume that $x_{n_i} = (-1)^i r_{n_i}$ for each $i \in [k]$.
	By shifting and reversing the order of the cycle $(n_1,\ldots,n_k)$ as required,
	we may also suppose that  $y_{n_1} \leq y_{n_i}$ for all $i \in [k]$ and $y_{n_2} \leq y_{n_k}$.
	We note that our new ordering will imply $y_{n_1} < y_{n_i}$ for all odd $i \in [k] \setminus \{1\}$,
	and $y_{n_2} < y_{n_k}$,
	as otherwise the interiors of some of the squares in the packing will intersect.
	
	Choose any distinct $i,j \in [k]$.
	We now investigate two cases.
	In our first case, suppose that $i \equiv j \mod 2$ and $y_{n_i} \leq y_{n_j}$.
	So that $S_{n_i}$ and $S_{n_j}$ do not overlap, we must have $y_{n_i} <  y_{n_j}$. 
	As
	\begin{align*}
		|x_{n_i}-x_{n_j}| =|r_{n_i}-r_{n_j}| < r_{n_i} + r_{n_j}
	\end{align*}
	and the interiors of the squares $S_{n_i}$ and $S_{n_j}$ do not intersect, it follows that
	\begin{align}\label{opt0}
		y_{n_i} + r_{n_i} \leq y_{n_j} - r_{n_j}.
	\end{align}
	For our second case,
	suppose that $\{n_i,n_j\} \in E$ and $y_{n_i} - r_{n_i} \leq y_{n_j} - r_{n_j}$.
	As $\{n_i,n_{j}\} \in E_x$, we have $|y_{n_i} - y_{n_j}| \leq r_{n_i} + r_{n_j}$.
	The following two inequalities, however, cannot hold:
	\begin{align}
		y_{n_i} - r_{n_i} < y_{n_j} - r_{n_j} < y_{n_j} + r_{n_j} < y_{n_i} + r_{n_i} \label{eq:bad1}\\
		y_{n_i} - r_{n_i} = y_{n_j} - r_{n_j} < y_{n_i} + r_{n_i} = y_{n_j} + r_{n_j} \label{eq:bad2}
	\end{align}
	The inequality given by \cref{eq:bad1} cannot hold as, given $n_\ell$ is the single other vertex adjacent to $n_j$ in the cycle $(n_1,\ldots,n_k)$ (ignoring all other edges of the contact graph), the interiors of the two squares $S_{n_i}$ and $S_{n_\ell}$ will be forced to intersect so that both have an $x$-direction contact with $S_{n_j}$.
	The inequality given by \cref{eq:bad2} also cannot hold as it implies $r_{n_i}=r_{n_j}$,
	contradicting the weak generic condition (since $n \geq 6$).
	With this, we are left with 5 cases possible inequalites when $\{n_i,n_j\}$ is an edge in the cycle $(n_1,\ldots,n_k)$:
	\begin{enumerate}
		\item \label{opt1} $y_{n_i} - r_{n_i} < y_{n_i} + r_{n_i} = y_{n_j} - r_{n_j} < y_{n_j} + r_{n_j}$.
		\item \label{opt2} $y_{n_i} - r_{n_i} < y_{n_j} - r_{n_j} < y_{n_i} + r_{n_i} < y_{n_j} + r_{n_j}$.
		\item \label{opt3} $y_{n_i} - r_{n_i} < y_{n_j} - r_{n_j} < y_{n_i} + r_{n_i} = y_{n_j} + r_{n_j}$.
		\item \label{opt4} $y_{n_i} - r_{n_i} = y_{n_j} - r_{n_j} < y_{n_i} + r_{n_i} < y_{n_j} + r_{n_j}$.
		\item \label{opt5} $y_{n_i} - r_{n_i} = y_{n_j} - r_{n_j} < y_{n_j} + r_{n_j} < y_{n_i} + r_{n_i}$.
	\end{enumerate}

	We now use this analysis for the vertices $n_1,n_2,n_k$ to obtain some inequalities.
	By assumption we have $y_{n_1} \leq y_{n_2} < y_{n_k}$.
	First note that, as $k$ is even (and hence $k  \equiv 2 \mod 2$),
	$y_{n_2} + r_{n_2} \leq y_{n_k} - r_{n_k}$ by \cref{opt0}.
	Suppose that $y_{n_2} - r_{n_2} \leq y_{n_1} - r_{n_1}$.
	As $y_{n_1} \leq y_{n_2}$, it follows that $y_{n_2} - r_{n_2} \leq y_{n_2} - r_{n_1}$,
	which in turn implies $r_{n_1} < r_{n_2}$ (as $r_{n_1} \neq r_{n_2}$ by the weak generic condition).
	Hence $y_{n_1} + r_{n_1} < y_{n_2} + r_{n_2}$,
	and so the only possible case that can hold is case \ref{opt5} with $j=1$ and $i =2$,
	i.e.,
	\begin{align*}
		y_{n_1} - r_{n_1} = y_{n_2} - r_{n_2} < y_{n_1} + r_{n_1} < y_{n_2} + r_{n_2}.
	\end{align*}
	However, since $y_{n_2} + r_{n_2} \leq y_{n_k} - r_{n_k}$,
	this implies $y_{n_1} + r_{n_1} < y_{n_k} - r_{n_k}$,
	contradicting that $\{n_1,n_k\} \in E_x$.
	Hence $y_{n_1} - r_{n_1} < y_{n_2} - r_{n_2}$.
	Now suppose that $y_{n_k} - r_{n_k} \leq y_{n_1} - r_{n_1}$.
	As before,
	we see that the only possibility is for case \ref{opt5} to hold with $j=1$ and $i=k$,
	i.e.,
	\begin{align*}
		y_{n_1} - r_{n_1} = y_{n_k} - r_{n_k} < y_{n_1} + r_{n_1} < y_{n_k} + r_{n_k}.
	\end{align*}
	However, since $y_{n_2} + r_{n_2} \leq y_{n_k} - r_{n_k}$ and $y_{n_1} - r_{n_1} < y_{n_2} - r_{n_2}$,
	this implies $y_{n_2} + r_{n_2} < y_{n_2} - r_{n_2}$,
	a contradiction.
	Hence $y_{n_1} - r_{n_1} < y_{n_k} - r_{n_k}$.
	It follows that for each $\ell \in \{2,k\}$,
	one of cases \ref{opt1}, \ref{opt2}, or \ref{opt3} holds with $i =1$ and $j = \ell$.
	By observing the possible cases for $i=1,j=k$,
	we see that $y_{n_k} - r_{n_k} \leq  y_{n_1} + r_{n_1}$.
	From this, both cases \ref{opt1} and \ref{opt2} quickly run into a contradiction when $i=1$ and $j=2$.	
	Hence case \ref{opt3} holds for $i=1$ and $j=2$,
	i.e.,
	\begin{align}\label{eq:n2}
		y_{n_1} - r_{n_1} <y_{n_2} - r_{n_2} < y_{n_1} + r_{n_1} = y_{n_2} + r_{n_2}.
	\end{align}
	As $y_{n_2} + r_{n_2} \leq y_{n_k} - r_{n_k}$,
	the only possible case that can hold for $i=1$ and $j=k$ is case \ref{opt1},
	i.e.,
	\begin{align}\label{eq:nk}
		y_{n_1} - r_{n_1} <y_{n_1} + r_{n_1} = y_{n_k} - r_{n_k} < y_{n_k} + r_{n_k}.
	\end{align}
	
	Now we turn our attention to the vertex $n_3$.
	Since $1 \equiv 3 \mod 2$,
	we have $y_{n_1} < y_{n_3}$ and $y_{n_1} + r_{n_1} \leq y_{n_3} - r_{n_3}$ by \cref{opt0}.
	Hence $y_{n_2} + r_{n_2} \leq y_{n_3} - r_{n_3}$ and $y_{n_k} - r_{n_k} \leq y_{n_3} - r_{n_3}$ by \cref{eq:n2,eq:nk}.
	It follows that case \ref{opt1} holds for $i=2$ and $j=3$,
	and hence 
	\begin{align}\label{ineq}
		y_{n_1} - r_{n_1} < y_{n_2} - r_{n_2} < y_{n_1} + r_{n_1} = y_{n_2} + r_{n_2} = y_{n_k} - r_{n_k} = y_{n_3} - r_{n_3}.
	\end{align}
	From this we observe that the squares $S_1,S_2,S_3,S_k$ share a corner,
	with
	$\{n_1,n_2\},\{n_3,n_k\} \in E_x \setminus E_y$,
	$\{n_1,n_3\},\{n_2,n_k\} \in E_y \setminus E_x$,
	and $\{n_1,n_k\},\{n_2,n_3\} \in E_x \cap E_y$.
	
	Suppose for contradiction that $k >4$ (i.e., $k \geq 6$).
	Then,
	since $2 \equiv 4 \mod 2$ and $y_{n_2} \leq y_{n_4}$,
	we have
	$y_{n_2} +r_{n_2} \leq y_{n_4} - r_{n_4}$ by \cref{opt0}.
	Hence by \cref{ineq},
	$y_{n_k} - r_{n_k} \leq y_{n_4} - r_{n_4}$.
	If $y_{n_4} - r_{n_4} < y_{n_k} + r_{n_k}$ then $|y_{n_4} - y_{n_k}| < r_{n_4} + r_{n_k}$,
	which,
	when combined with $|x_{n_4} - x_{n_k}| < r_{n_4} + r_{n_k}$,
	contradicts that the interiors of $S_{n_4}$ and $S_{n_k}$ are disjoint.
	Hence $y_{n_k} + r_{n_k} \leq y_{n_4} - r_{n_4}$.
	Since $y_{n_k} - r_{n_k} < y_{n_k} + r_{n_k}$ and $y_{n_3} - r_{n_3} = y_{n_k} - r_{n_k}$ (\cref{ineq}),
	we have $y_{n_3} - r_{n_3} < y_{n_4} - r_{n_4}$.
	As one of cases \ref{opt1}, \ref{opt2} and \ref{opt3} must hold for $i=3$ and $j=4$,
	we have $y_{n_4} - r_{n_4} \leq y_{n_3} + r_{n_3}$.
	By combining this with the previous inequality of $y_{n_k} + r_{n_k} \leq y_{n_4} - r_{n_4}$,
	we have $y_{n_k} + r_{n_k} \leq y_{n_3} + r_{n_3}$.
	If $y_{n_k} + r_{n_k} = y_{n_3} + r_{n_3}$ then,
	since $y_{n_k} - r_{n_k} = y_{n_3} - r_{n_3}$ (\cref{ineq}),
	we would have $r_{n_3} = r_{n_4}$,
	contradicting the weak generic condition.
	Thus 
	\begin{equation}\label{eq:k3}
		y_{n_k} + r_{n_k} < y_{n_3} + r_{n_3}.
	\end{equation}
	Now observe the vertex $n_{k-1}$.
	If $y_{n_3} \leq y_{n_{k-1}}$ then by \cref{opt0,eq:k3} we have
	\begin{equation*}
		y_{n_k} + r_{n_k} < y_{n_3} + r_{n_3} \leq y_{n_{k-1}} - r_{n_{k-1}}
	\end{equation*}
	which implies $|y_{n_k} - y_{n_{k-1}}| > r_{n_k} + r_{n_{k-1}}$,
	contradicting that $\{n_{k-1},n_k\} \in E$.
	Thus $y_{n_{k-1}} < y_{n_{3}}$, and so, since $k$ is even and $(k-1) \equiv 3 \mod 2$,
	\begin{align}\label{eq:k-13}
		y_{n_{k-1}} + r_{n_{k-1}} \leq  y_{n_{3}} - r_{n_{3}}
	\end{align}
	by \cref{opt0}.
	By applying \cref{opt0} with $i=1$ and $j=k-1$ (since $k$ is even),
	and then applying the substitutions from \cref{ineq},
	we see that 
	\begin{align}\label{eq:k-1k}
		y_{n_k} - r_{n_k} \leq y_{n_{k-1}} - r_{n_{k-1}}.
	\end{align}
	However by combining \cref{ineq,eq:k-13,eq:k-1k} we see that
	\begin{align*}
		y_{n_{k-1}} + r_{n_{k-1}} \leq y_{n_3} - r_{n_3} = y_{n_k} - r_{n_k} \leq y_{n_{k-1}} - r_{n_{k-1}}
	\end{align*}
	contradicting that $r_{n_{k-1}} >0$.
	Hence $k=4$,
	completing the proof.
\end{proof}

Using the previous technical result,
we now prove that the weak generic condition forces all cycles in $([n],E_x)$ and $([n],E_y)$ to either be very long or generated by four squares sharing a corner.

\begin{lemma}\label{mainlem1}
	Let $P$ be a homothetic square packing with contact graph $G=([n],E)$, radii $r_1,\ldots,r_n$ and centres $p_1,\ldots, p_n$.
	Suppose that the radii satisfy the weak generic condition.
	If either of the graphs $([n],E_x), ([n],E_y)$ contains a cycle $(n_1,\ldots,n_k)$ with $k \leq n-2$,
	then $k=4$ and the squares $S_{n_1},S_{n_2},S_{n_3},S_{n_4}$ share a corner.
\end{lemma}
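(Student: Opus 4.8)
The plan is to reduce the statement to the ``zigzagging'' situation already resolved in \Cref{l:zigzag}, using the weak generic condition to eliminate every cycle vertex at which the relevant coordinate behaves monotonically.

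By rotating $P$ through $90^\circ$ we may assume the cycle $(n_1,\ldots,n_k)$ lies in $([n],E_x)$. Since $([n],E_x)$ is triangle-free by \Cref{mainlem0}, we have $k\geq 4$, and together with $k\leq n-2$ this forces $n\geq 6$, so \Cref{l:zigzag} will be applicable. Adopt the cyclic conventions $n_0:=n_k$ and $n_{k+1}:=n_1$. For each $i\in[k]$ the edge $\{n_i,n_{i+1}\}\in E_x$ gives $|x_{n_i}-x_{n_{i+1}}|=r_{n_i}+r_{n_{i+1}}>0$, so the sign $\varepsilon_i\in\{-1,1\}$ of $x_{n_{i+1}}-x_{n_i}$ is well defined and $x_{n_{i+1}}-x_{n_i}=\varepsilon_i(r_{n_i}+r_{n_{i+1}})$; set also $\varepsilon_0:=\varepsilon_k$. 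Telescoping around the cycle yields $0=\sum_{i=1}^k(x_{n_{i+1}}-x_{n_i})=\sum_{i=1}^k\varepsilon_i(r_{n_i}+r_{n_{i+1}})$, and reindexing the second occurrence of each radius turns this into $\sum_{i=1}^k(\varepsilon_{i-1}+\varepsilon_i)\,r_{n_i}=0$.

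Next I would define $\sigma:[n]\to\{-1,0,1\}$ by $\sigma_{n_i}:=\tfrac12(\varepsilon_{i-1}+\varepsilon_i)$ for $i\in[k]$ and $\sigma_j:=0$ for $j\notin\{n_1,\ldots,n_k\}$; this is well defined because $\varepsilon_{i-1},\varepsilon_i\in\{-1,1\}$, and it satisfies $\sum_{j=1}^n\sigma_j r_j=0$. Counting zeroes of $\sigma$: first, $\sigma_j=0$ for the $n-k\geq2$ indices outside the cycle. Second, let $n_a$ and $n_b$ be cycle vertices attaining $\max_i x_{n_i}$ and $\min_i x_{n_i}$ respectively; since consecutive $x$-coordinates along the cycle differ, the sequence $x_{n_1},\ldots,x_{n_k}$ is non-constant, so $n_a\neq n_b$, and at each of $a,b$ the coordinate has a strict local extremum, giving $\varepsilon_{a-1}=1=-\varepsilon_a$ and $\varepsilon_{b-1}=-1=-\varepsilon_b$, hence $\sigma_{n_a}=\sigma_{n_b}=0$. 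Thus $\sigma$ has at least $4$ zeroes while satisfying $\sum_j\sigma_j r_j=0$, so the weak generic condition forces $\sigma\equiv0$. In particular $\varepsilon_{i-1}=-\varepsilon_i$ for every $i\in[k]$, which is precisely the hypothesis of \Cref{l:zigzag} (at no cycle vertex is $x_{n_{i-1}}<x_{n_i}<x_{n_{i+1}}$ or $x_{n_{i+1}}<x_{n_i}<x_{n_{i-1}}$). Applying \Cref{l:zigzag} then gives $k=4$ and that $S_{n_1},S_{n_2},S_{n_3},S_{n_4}$ share a corner.

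The substantive work is all packaged in \Cref{l:zigzag}; the present argument is the combinatorial shell that feeds an arbitrary short cycle into it, so the only points needing care are that the two extremal cycle vertices are genuinely distinct — guaranteed by consecutive $x$-coordinates being unequal — and that the zero count reaches $4$, for which the slack $n-k\geq2$ supplied by the hypothesis $k\leq n-2$ is exactly what is required. I do not anticipate any real obstacle beyond this bookkeeping.
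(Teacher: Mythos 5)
Your argument is correct and is essentially the paper's own proof: both telescope the $x$-coordinates around the cycle, encode the local monotonicity pattern in a map $\sigma:[n]\to\{-1,0,1\}$ (your $\tfrac12(\varepsilon_{i-1}+\varepsilon_i)$ is exactly the paper's $\sigma$), count at least four zeroes using the two $x$-extremal cycle vertices plus the $n-k\geq 2$ vertices off the cycle, invoke the weak generic condition to force $\sigma\equiv 0$, and finish with \Cref{l:zigzag}. Your explicit check that $n\geq 6$ (so that \Cref{l:zigzag} applies) is a small point the paper leaves implicit.
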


\begin{proof}
	Without loss of generality, we will assume $(n_1,\ldots,n_k)$ is a cycle of $([n],E_x)$.
	We will also fix that $n_{0} := n_k$ and $n_{k+1} := n_1$.	
	By \Cref{mainlem0},
	$k \geq 4$.
	Let $p_i = (x_i,y_i)$ for each $i \in [n]$.
	Define the function $\sigma: [n] \rightarrow \{-1,0,1\}$,
	where for each $i \in [n]$ we have:
	\begin{align*}
		\sigma_i :=
		\begin{cases}
			1 &\text{if } i = n_j \text{ for some $j \in [k]$ and } x_{n_{j-1}} < x_{n_{j}} < x_{n_{j+1}},\\
			-1 &\text{if } i = n_j \text{ for some $j \in [k]$ and } x_{n_{j+1}} < x_{n_{j}} < x_{n_{j-1}},\\
			0 &\text{otherwise.}
		\end{cases}
	\end{align*}
	Fix $s,t \in [k]$ to be distinct points where $x_{n_s} \leq x_{n_i} \leq x_{n_t}$ for each $i \in [k]$.
	By our choice of $\sigma$ we must have $\sigma_{n_s} = \sigma_{n_t} = 0$.
	As $k \leq n-2$,
	it follows that the map $\sigma$ has at least 4 zeroes.
	We observe the following property for any $i \in [k]$:
	\begin{align*}
		\frac{x_{n_{i+1}} - x_{n_i}}{|x_{n_{i+1}} - x_{n_i}|} + \frac{x_{n_{i}} - x_{n_{i-1}}}{|x_{n_i} - x_{n_{i-1}}|} = 2 \sigma_{n_i}.
	\end{align*}
	Adding this observation to the fact that $(n_1,\ldots,n_k)$ is a cycle of $([n],E_x)$, we have that
	\begin{align*}
		0 = \sum_{i=1}^k x_{n_{i+1}} - x_{n_i} = \sum_{i=1}^k \frac{x_{n_{i+1}} - x_{n_i}}{|x_{n_{i+1}} - x_{n_i}|} (r_{n_i} +r_{n_{i+1}}) = \sum_{i=1}^k 2 \sigma_{n_i} r_{n_i}  = 2\sum_{i=1}^n \sigma_i r_i.
	\end{align*}
	Hence $\sigma_i = 0$ for all $i \in [n]$,
	as the radii satisfy the weak generic condition.
	This implies that our cycle is ``zigzagging'',
	i.e., for each $i \in [k]$ we have that either $x_{n_{i-1}}< x_{n_i}$ and $x_{n_{i+1}}< x_{n_i}$,
	or $x_{n_{i-1}}> x_{n_i}$ and $x_{n_{i+1}}< x_{n_i}$.
	The result now follows from \Cref{l:zigzag}.
\end{proof}

Our next goal of this section is to prove the following:
if one of the coloured subgraphs contain a sufficiently long cycle,
then the weak generic condition will imply that the number of edges in the contact graph is bounded above by $2n-2$.
We first need the following technical lemma.

\begin{lemma}\label{l:forbiddensg}
	Let $P$ be a homothetic square packing with contact graph $G=([n],E)$, radii $r_1,\ldots,r_n$ and centres $p_1,\ldots, p_n$.
	If the radii of $P$ satisfy the weak generic condition,
	then $G$ does not contain the subgraph pictured in \Cref{fig:forbiddensg}.
\end{lemma}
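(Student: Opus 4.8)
The plan is to argue by contradiction: suppose $G$ contains the subgraph shown in \Cref{fig:forbiddensg}, relabel $P$ so that its squares carry the vertex labels of the figure, and then manufacture a non-trivial function $\sigma:[n]\to\{-1,0,1\}$ with at least four zeroes satisfying $\sum_{i=1}^n\sigma_i r_i=0$, contradicting the weak generic condition. The construction of $\sigma$ should follow the template of the proof of \Cref{mainlem1}: the figure will contain a cycle lying inside one of the colour classes, say $(n_1,\dots,n_k)\subseteq E_x$ (possibly only after letting a double red-blue edge play its red rôle), and for each consecutive pair one has $r_{n_i}+r_{n_{i+1}}=|x_{n_i}-x_{n_{i+1}}|$. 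Summing the signed displacements $x_{n_{i+1}}-x_{n_i}$ around the cycle gives $0$, and once the signs are understood this becomes $\sum_i 2\sigma_{n_i}r_{n_i}=0$.

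The crux is resolving the absolute values, i.e.\ determining the linear order of the relevant coordinates of the labelled vertices so that each sign $\sigma_{n_i}\in\{-1,0,1\}$ is pinned down. This is where the extra edges and the colour pattern in the figure are used: \Cref{mainlem0} forbids any monochromatic triangle and so rules out several orderings outright; \Cref{l:4squares}\ref{l:4squares3} controls exactly when the straight-line images of two vertex-disjoint edges may meet and what colours the surrounding edges must then carry; and the packing hypothesis forces every non-adjacent pair of the labelled squares to be separated by a strict inequality in at least one coordinate. Feeding the figure's edge-and-colour data into these three constraints should leave only finitely many admissible orderings, each of which is eliminated, or pushed through to the desired relation, by a short case analysis modelled on \Cref{l:zigzag}. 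In a surviving case one reads off that the coordinate-extremal labelled vertices (leftmost and rightmost along the red cycle, and analogously for any blue cycle used) receive $\sigma=0$; since the figure is small and $n$ is large in the intended application, $\sigma$ then has at least four zeroes, so the weak generic condition forces $\sigma\equiv 0$.

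Finally, $\sigma\equiv 0$ means every cancellation in the telescoping sum was forced to occur between equal quantities, and I would convert this into an outright geometric impossibility in the style of \Cref{mainlem0}: it should pin two of the labelled radii to be equal where the figure's geometry also forbids equality, or pin some labelled radius to $0$, or force two labelled squares to have overlapping interiors --- any of which contradicts that $P$ is a homothetic square packing with the given positive radii. Hence $G$ contains no copy of the subgraph in \Cref{fig:forbiddensg}. I expect the ordering/sign analysis of the second paragraph to be the only genuine difficulty; extracting and summing the displacement equations is routine once the orders are known, and the weak-generic step is then immediate.
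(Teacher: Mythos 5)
There is a genuine gap: your proposal is a template for \emph{how one might} extract a radius relation from a monochromatic cycle, but it misses the two ideas that actually drive the paper's proof. First, the decisive structural observation is that the figure contains two $4$-cliques, $\{1,2,4,5\}$ and $\{2,3,5,6\}$, sharing the edge $\{2,5\}$; by \Cref{l:4share} and \Cref{l:4squares}\ref{l:4squares3} each clique is four squares sharing a corner, so in each clique exactly one perfect matching consists of edges in $E_x\cap E_y$. The case where $\{2,5\}$ is such a double edge is not eliminated by any ordering or sign analysis: it is killed by noting that the common corner point $p_{25}$ would then lie in all six squares, producing a $6$-clique and contradicting \Cref{l:4share}. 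Nothing in your second paragraph produces this argument, and without it the colouring of the figure is not pinned down.

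Second, your endgame is wrong in the surviving case. Once the colouring is forced (say $\{1,5\},\{2,4\},\{2,6\},\{3,5\}\in E_x\cap E_y$, $\{2,5\}\in E_y\setminus E_x$, the remaining edges single-coloured accordingly), the relevant monochromatic cycle is $(2,4,5,6)$ in $([n],E_x)$, and the $\sigma$ you would build from it is identically zero --- which is \emph{not} a contradiction. By \Cref{l:zigzag} (equivalently, \Cref{mainlem1}) it merely says that $S_2,S_4,S_5,S_6$ share a corner, a perfectly consistent geometric configuration. The contradiction is downstream and purely combinatorial: the shared corner makes $\{2,4,5,6\}$ a clique, forcing the edge $\{4,6\}$ into $E$, and whichever colour $\{4,6\}$ receives it completes a monochromatic triangle (with $\{4,5\},\{5,6\}\in E_x$ or with $\{2,4\},\{2,6\}\in E_y$), contradicting \Cref{mainlem0}. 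Your proposed conversion of $\sigma\equiv 0$ into ``two radii equal'', ``a radius equal to $0$'', or ``overlapping interiors'' does not materialise here, so as written the argument does not close.
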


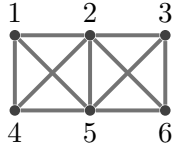
\begin{figure}[ht]
\begin{center}
\begin{tikzpicture}[scale=1]
		
		\node[vertex] (1) at (0,0) {};
		\node[vertex] (2) at (1,0) {};
		\node[vertex] (3) at (2,0) {};
		\node[vertex] (4) at (0,-1) {};
		\node[vertex] (5) at (1,-1) {};
		\node[vertex] (6) at (2,-1) {};
		
		\node (l1) at (0,0.3) {$1$};
		\node (l2) at (1,0.3) {$2$};
		\node (l3) at (2,0.3) {$3$};
		\node (l4) at (0,-1.3) {$4$};
		\node (l5) at (1,-1.3) {$5$};
		\node (l6) at (2,-1.3) {$6$};
			
		\draw[edge] (1)edge(2);
		\draw[edge] (1)edge(4);
		\draw[edge] (1)edge(5);
		\draw[edge] (2)edge(3);
		\draw[edge] (2)edge(4);
		\draw[edge] (2)edge(5);
		\draw[edge] (2)edge(6);
		\draw[edge] (3)edge(5);
		\draw[edge] (3)edge(6);
		\draw[edge] (4)edge(5);
		\draw[edge] (5)edge(6);
	\end{tikzpicture}
	\end{center}
	\caption{The forbidden subgraph of \Cref{l:forbiddensg} with vertices labelled 1 to 6.}
	\label{fig:forbiddensg}
\end{figure}

\begin{proof}Let $H$ be the graph pictured in \Cref{fig:forbiddensg}.
	Suppose for contradiction that $G$ contains a copy of $H$.
	Note that, as the radii $r_1,\ldots, r_n$ satisfy the weak generic condition,
	the radii $r_1,\ldots, r_6$ satisfy the weak generic condition.
	Hence without loss of generality we may assume that $G$ contains $H$ as a spanning subgraph (i.e., $n=6$).
	By relabelling the vertices of $G$ we may assume $H$ has the vertex labelling described in \Cref{fig:forbiddensg}.
	
	As $\{1,2,4,5\}$ (respectively, $\{2,3,5,6\}$) is a clique,
	by \Cref{l:4share} and \Cref{l:4squares}\ref{l:4squares3}
	there exist exactly two edges supported on $\{1,2,4,5\}$ (respectively, $\{2,3,5,6\}$) that are contained in $E_x \cap E_y$,
	and these two edges do not share any vertices.
	Suppose that $\{2,5\} \in E_x \cap E_y$.
	Then the edges $\{2,5\},\{1,4\},\{3,6\}$ are contained in $E_x \cap E_y$ and the rest of the edges of $H$ are contained in $E_x \triangle E_y$.
	It follows from \Cref{l:3squares,l:lineseg} that	the point $p_{25}$ described in \cref{eq:midpoint} is the unique point in the set $S_1 \cap S_2 \cap S_4 \cap S_5$ and also the unique point in the set $S_2 \cap S_3 \cap S_5 \cap S_6$.
	Hence $p_{25}$ is contained in every square of $P$.
	However this implies $G$ contains a clique of size 6,
	contradicting \Cref{l:4share}.
	Hence $\{2,5\}$ cannot be both an $x$- and $y$-direction contact.
	
	By relabelling the vertices of $H$,
	we can assume that the edges $\{1,5\},\{2,4\},\{2,6\},\{3,5\}$ are contained in $E_x \cap E_y$ and the rest of the edges of $H$ are contained in $E_x \triangle E_y$.
	By rotating $P$ we may assume that $\{2,5\} \in E_y$ (and hence $\{2,5\} \notin E_x$).
	Hence the edges of $H$ have the following colouring:
\begin{figure}[h]
\begin{center}
\begin{tikzpicture}[scale=1]
		
		\node[vertex] (1) at (0,0) {};
		\node[vertex] (2) at (1,0) {};
		\node[vertex] (3) at (2,0) {};
		\node[vertex] (4) at (0,-1) {};
		\node[vertex] (5) at (1,-1) {};
		\node[vertex] (6) at (2,-1) {};
		
		\node (l1) at (0,0.3) {$1$};
		\node (l2) at (1,0.3) {$2$};
		\node (l3) at (2,0.3) {$3$};
		\node (l4) at (0,-1.3) {$4$};
		\node (l5) at (1,-1.3) {$5$};
		\node (l6) at (2,-1.3) {$6$};
			
		\draw[edge, red] (1)edge(2);
		\draw[edge, blue] (1)edge(4);
		\draw[edge, red] (2)edge(3);
		\draw[edge, blue] (2)edge(5);
		\draw[edge, blue] (3)edge(6);
		\draw[edge, red] (4)edge(5);
		\draw[edge, red] (5)edge(6);
		
		\path[edge,red] (4) edge [bend right=10] node {} (2);
		\path[edge,blue] (4) edge [bend left=10] node {} (2);
		
		\path[edge,red] (2) edge [bend right=10] node {} (6);
		\path[edge,blue] (2) edge [bend left=10] node {} (6);
		
		\path[edge,red] (1) edge [bend right=10] node {} (5);
		\path[edge,blue] (1) edge [bend left=10] node {} (5);
		
		\path[edge,red] (5) edge [bend right=10] node {} (3);
		\path[edge,blue] (5) edge [bend left=10] node {} (3);
	\end{tikzpicture}
	\end{center}
	\label{fig:hgraph}
\end{figure}	
	\\
	Note that $(2,4,5,6)$ is a cycle in $([n],E_x)$.
	As the radii of $P$ satisfy the weak generic condition and $4 \leq n-2$,
	$\{2,4,5,6\}$ is a clique in $G$ and $\{4,6\} \in E$ by \Cref{mainlem1}.
	However the edge $\{4,6\}$ forms a cycle of length 3 in both $([n],E_x)$ and $([n],E_y)$, contradicting \Cref{mainlem0}.
\end{proof}

We are now ready to prove the final key lemma of the section.

\begin{lemma}\label{mainlem2}
	Let $P$ be a homothetic square packing with contact graph $G=([n],E)$, radii $r_1,\ldots,r_n$ and centres $p_1,\ldots, p_n$.
	Suppose that the radii satisfy the weak generic condition.
	If either of the subgraphs $([n],E_x), ([n],E_y)$ contains a cycle $(n_1,\ldots,n_k)$ with $k \geq n-1$,
	then $|E| \leq 2n - 2$.
\end{lemma}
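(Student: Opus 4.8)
The plan is to work separately with the two colour classes and combine via $|E| = |E_x| + |E_y| - |E_x \cap E_y|$, controlling each term in terms of the number $a$ of corner‑sharing $4$‑cliques. By rotating $P$ we may assume the long cycle $C = (n_1,\dots,n_k)$, $k \ge n-1$, lies in $([n],E_x)$; the cases $n \le 5$ follow directly from \Cref{l:4share,l:cliques2share} (for instance, for $n=5$ the graph would have to be $K_5$ or $K_5$ minus an edge, both excluded), so assume $n \ge 6$ and hence $k \ge 5$.

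First I would pin down the geometry of $C$. Exactly as in the proof of \Cref{mainlem1}, let $\sigma \colon [n] \to \{-1,0,1\}$ record whether the $x$‑coordinate increases through, decreases through, or turns at each vertex of $C$; then $\sum_i \sigma_i r_i = 0$. The zeros of $\sigma$ are the turning points of $x$ along $C$, so there is an even number of them, at least two. If there were at least four, the weak generic condition would force $\sigma \equiv 0$, and then \Cref{l:zigzag} would give $k = 4$, contradicting $k \ge 5$. Hence $C$ has exactly one $x$‑local maximum $w$ and one $x$‑local minimum $u$, so $C$ splits into two $x$‑monotone arcs from $u$ to $w$; in particular, for every vertex $n_i \notin \{u,w\}$ its two $C$‑neighbours lie strictly on opposite sides of it in the $x$‑coordinate, and (summing the monotone $x$‑gaps $r_{n_j}+r_{n_{j+1}}$) any non‑adjacent vertex of the arc ending at $w$ lies too far left in $x$ to meet $S_w$ at all.

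Next I would show $C$ has no chord in $E_x$. Such a chord cuts $C$ into arcs of lengths $a' \le b'$ with $a' \ge 3$ (no arc has length $2$, since $([n],E_x)$ is triangle‑free by \Cref{mainlem0}); the shorter induced cycle has length $a'+1 \le \lfloor n/2\rfloor + 1 \le n-2$, so by \Cref{mainlem1} it is a corner‑sharing $4$‑clique and $a'=3$, whence the longer cycle has length $k-2 \le n-2$ and is also such a $4$‑clique, forcing $k=6$; but then the unique available chord produces two $4$‑cliques sharing an edge, contradicting \Cref{l:forbiddensg}. Consequently $E_x$ restricted to $V(C)$ is exactly $C$, so any $4$‑clique consists of the off‑cycle vertex $v$ (present only if $k = n-1$) together with three consecutive $C$‑vertices $n_{i-1},n_i,n_{i+1}$; combining the corner‑configuration analysis of \Cref{l:4squares} with the previous paragraph, the "middle" vertex $n_i$ must be $u$ or $w$, and then \Cref{l:cliques2share,l:forbiddensg} force $v$ to lie in at most two $4$‑cliques. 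More usefully, every cycle of $([n],E_x)$ lies in the span of $C$ together with the $E_x$‑four‑cycles of the $4$‑cliques, so $\mathrm{cyc}([n],E_x) \le 1 + a$, i.e. $|E_x| \le n + a$.

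Finally, the combination. By the $90^{\circ}$‑rotation symmetry the same analysis applies to $E_y$: either $([n],E_y)$ contains a cycle of length $\ge n-1$ and $|E_y| \le n+a$, or it does not and \Cref{mainlem1} makes the $E_y$‑four‑cycles of the $4$‑cliques span all of its cycle space, giving $|E_y| \le n-1+a$. Since distinct $4$‑cliques are edge‑disjoint (\Cref{l:forbiddensg}) and each contributes two of its edges to $E_x \cap E_y$, we get $|E_x \cap E_y| \ge 2a$. Feeding these estimates into $|E| = |E_x| + |E_y| - |E_x\cap E_y|$ yields $|E| \le 2n-1$, and I would close the last gap of $1$ by tracking where the estimates are tight: equality forces $a=0$ (no $4$‑cliques, so $G$ equals its planar spanning subgraph from \Cref{p:upperbound}), $|E_x| = n$ with $([n],E_x)$ connected, $([n],E_y)$ a spanning tree, and $E_x \cap E_y = \varnothing$; but then, by the monotone‑arc structure, the rightmost square $S_w$ of $C$ is too far right to have a $y$‑direction contact with any other square (any $C$‑vertex to its left is out of range in $x$, and a contact with $S_w$ along an $E_x$‑edge would put that edge in $E_x \cap E_y$), so $w$ is isolated in $([n],E_y)$, contradicting connectedness of the tree. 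The main obstacle I expect is precisely this final squeeze from $2n-1$ to $2n-2$: the bookkeeping for $|E_x|$, $|E_y|$ and $|E_x \cap E_y|$ must be done carefully enough that the only surviving tight case is the one killed by the extremal‑square argument, and the variant $k = n-1$ (with one square off $C$) needs the argument to be run at both $u$ and $w$ together with a little care about where the off‑cycle square sits.
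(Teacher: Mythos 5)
Your set-up — the $\sigma$-argument forcing exactly two turning points and two $x$-monotone arcs, and the chordlessness of $C$ via \Cref{mainlem1} and \Cref{l:forbiddensg} — matches the paper and is sound. The counting stage, however, has genuine gaps. First, your conclusion $|E|\le 2n-1$ only follows in the branch where $([n],E_y)$ has no cycle of length $\ge n-1$; in the other branch your estimates give $(n+a)+(n+a)-2a=2n$, and you neither rule that branch out nor improve the bound there. Second, your tightness analysis rests on the claim that equality forces $a=0$, but the $a$-terms cancel identically in $(n+a)+(n-1+a)-2a$, so equality does \emph{not} force $a=0$ and the extremal configuration you analyse is not the only surviving one. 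Third, the bound $\mathrm{cyc}([n],E_x)\le 1+a$ is asserted rather than proved: you must show that any second cycle of length $\ge n-1$ lies in the span of $C$ and the $a$ clique-quadrilaterals, which requires an argument about how such a cycle meets $C$.

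The decisive gap is the final squeeze from $2n-1$ to $2n-2$. Your claim that the rightmost cycle vertex $w=n_s$ is isolated in $([n],E_y)$ only accounts for vertices on $C$; when $k=n-1$ the off-cycle square can be $y$-adjacent to $w$ (and to $u$), and this is exactly the configuration the paper must work hardest to handle: its case (iii) shows by an explicit coordinate computation that if the off-cycle square is $E_y\setminus E_x$-adjacent to both $n_1$ and $n_s$ then it has \emph{no} $E_x$-neighbours, so $|E_x|=n-1$, while case (ii) needs a separate geometric argument locating the shared corner at $n_1$ or $n_s$. You acknowledge this variant but leave it unresolved, and it is where the bulk of the proof lives. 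For comparison, the paper sidesteps your inclusion--exclusion entirely: it writes $|E|=|E_x|+|E_y\setminus E_x|$ and proves from the explicit $x$-coordinates that $E_y\setminus E_x$ restricted to the cycle vertices is a forest with at least three components (hence at most $k-3$ edges), which immediately gives $2n-3$ when $k=n$ and reduces $k=n-1$ to the three-case analysis of the off-cycle vertex; this also absorbs the ``both colours have long cycles'' possibility that your decomposition leaves open.
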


\begin{proof}
	Without loss of generality, we will assume $(n_1,\ldots,n_k)$ is a cycle of $([n],E_x)$;
	this can be achieved by rotating $P$ by $90^\circ$ if necessary.
	If $n=4$ then $|E| \leq 2n-2$. 
	If $n=5$ with $|E| >2n-2$ then $G$ contains two cliques of size 4 sharing 3 vertices,
	which contradicts \Cref{l:cliques2share}.
	Hence we may assume $n \geq 6$.
	
	Suppose that $(n_1,\ldots,n_k)$ has a chord.
	By \Cref{mainlem0,mainlem1},
	any cycles of $([n],E_x)$ or $([n],E_y)$ have length 4, $n-1$ or $n$,
	and any cycle of length 4 is generated by four squares sharing a corner.
	So that the chord does not create a forbidden cycle in $([n],E_x)$,
	we must have $k=6$ and the chord must split the cycle into two cycles of length 4,
	each of which generated by four squares sharing a corner.
	However this implies that $G$ contains a copy of the graph pictured in \Cref{fig:forbiddensg},
	contradicting \Cref{l:forbiddensg}.	
	Hence the cycle $(n_1,\ldots,n_k)$ is chordless in $([n],E_x)$.
	It follows that the vertex set $\{n_1,\ldots,n_k\}$ cannot induce cycles of length 4 in either $([n],E_x)$ or $([n],E_y)$;
	any cycle of length 4 in the subgraph of $([n],E_y)$ induced by $\{n_1,\ldots,n_k\}$ necessitates a cycle of length 4 in the subgraph of $([n],E_x)$ induced by $\{n_1,\ldots,n_k\}$ (\Cref{mainlem1}),
	contradicting that $(n_1,\ldots,n_k)$ is a chordless cycle of $([n],E_x)$.
	In particular, no four squares in the set $\{n_1,\ldots,n_k\}$ can share a corner.
	
	Let $p_i = (x_i,y_i)$ for each $i \in [n]$.
	By relabelling the vertices (but maintaining that the order $(n_1,\ldots,n_k)$ forms a cycle),
	we will assume that $x_{n_1} \leq x_{n_i}$ for all $i \in [k]$.
	Fix $s \in [k]$ to be an index where $x_{n_s} \geq x_{n_i}$ for all $i \in [k]$.
	Define the function $\sigma: [n] \rightarrow \{-1,0,1\}$,
	where for each $i \in [n]$ we have:
	\begin{align*}
		\sigma_i :=
		\begin{cases}
			1 &\text{if } i = n_j \text{ for some $j \in [k]$ and } x_{n_{j-1}} < x_{n_{j}} < x_{n_{j+1}},\\
			-1 &\text{if } i = n_j \text{ for some $j \in [k]$ and } x_{n_{j+1}} < x_{n_{j}} < x_{n_{j-1}},\\
			0 &\text{otherwise}
		\end{cases}
	\end{align*}
	(here we set $n_0 = n_k$ and $n_{k+1} = n_1$).
	We now note four immediate properties of the map $\sigma$:
	(i)	$\sigma_{n_1} = \sigma_{n_s} = 0$;
	(ii) if $\sigma_{n_i} \neq 0$, then $\sigma_{n_{i+1}}$ is either 0 or equal to $\sigma_{n_i}$;
	(iii) if $\sigma_{n_i} = 1$ (respectively, $\sigma_{n_i} = -1$) and $\sigma_{n_{i+1}} = \ldots = \sigma_{n_{i+m}} = 0 \neq \sigma_{n_{i+m+1}}$ for some $m$,
	then $\sigma_{n_{i+m+1}} = 1$ (respectively, $\sigma_{n_{i+m+1}} = -1$) if $m$ is even and $\sigma_{n_{i+m+1}} = -1$ (respectively, $\sigma_{n_{i+m+1}} = 1$) if $m$ is odd;
	(iv) the map $\sigma$ is not constant (i.e., $\sigma_i = 0$ for all $i \in [n]$) as \Cref{l:zigzag} would then imply $k=4$, contradicting that $n \geq 6$.
	As $\sum_{i=1}^n \sigma_i r_i = 0$ (this follows from the same methods implemented in \Cref{mainlem1}),
	there exists some $a,b \in [k]$ such that $\sigma_{n_a} = 1$ and $\sigma_{n_b} = -1$,
	and $\sigma$ has at most 3 zeores (due to the radii satisfying the weak generic condition).
	It now follows that the number of indices $j \in [k]$ where $\sigma_{n_j}=0 $ must be non-zero and even;
	to see this, note that as we traverse the cycle $(n_1,\ldots,n_k)$ from $n_1$ back to $n_1$,
	the map $\sigma$ switches between $1$ and $-1$ (ignoring any zeroes inbetween) an even amount of times and only the $+ 1/-1$ switches generate odd-length strings of zeroes.
	Since $\sigma$ can have at most 3 zeroes,
	it follows that $\sigma$ has exactly two zeroes contained in the cycle $(n_1,\ldots,n_k)$.
	As $\sigma_{n_1} = \sigma_{n_s} =0$ and $x_{n_1} \leq x_{n_i}$ for all $i \in [k]$,
	we see that
	\begin{align*}
		\sigma_i =
		\begin{cases}
			1 &\text{if } i = n_j \text{ for some $1 < j < s$},\\
			-1 &\text{if } i = n_j \text{ for some $s< j \leq k$},\\
			0 &\text{otherwise.}
		\end{cases}
	\end{align*}
	Hence for $i \in [k]$ with $i \neq 1$ we have
	\begin{align*}
		x_{n_i} =
		\begin{cases}
			r_{n_i} + (x_{n_1} + r_{n_1}) + \sum_{j=2}^{i-1} 2r_{n_j} &\text{if } 1 < i \leq s,\\
			r_{n_i} + (x_{n_1} + r_{n_1}) + \sum_{j=i+1}^{k} 2r_{n_j} &\text{if } s \leq i \leq k.\\
		\end{cases}
	\end{align*}
	(Here we are using the convention that $\sum_{j=a}^b t_j =0$ if $a>b$.)
	We observe that if a pair $\{n_i,n_j\}$ with $1 \leq i <j \leq k$ is an edge of $([n],E_y)$, then either $j = i+1$ or $i \leq s \leq j$.
	Furthermore,
	the vertex $n_1$ is adjacent to a vertex $n_i$ in $([n],E)$ if and only if $i \in \{2,k\}$,
	and the vertex $n_s$ is adjacent to a vertex $n_j$ in $([n],E)$ if and only if $j \in \{s-1,s+1\}$.
	
	Fix $G'=(V',E')$ to be the subgraph of $G$ induced by $V':=\{n_1,\ldots,n_k\}$, and define $(V',E_x')$ and $(V',E'_y)$ to be the corresponding induced coloured subgraphs of $G'$.
	As $(n_1,\ldots,n_k)$ is a chordless cycle of $([n],E_x)$,
	the graph $(V',E_x')$ is a connected cycle with $k$ edges.
	Since any neighbours of $n_1$ and $n_s$ in $(V',E'_y)$ are also their neighbours in $(V',E'_x)$,
	the vertices $n_1,n_s$ are isolated vertices in $(V',E'_y \setminus E'_x)$,
	and hence any cycle in $(V',E'_y \setminus E'_x)$ contains at most $|V'|-2 \leq n-2$ vertices.
	As shown prior, $(V',E_y')$ does not contain any cycles of length at most $n-2$.
	Hence the subgraph $(V',E'_y \setminus E'_x)$ is a forest with at least 3 connected components (and thus at most $k-3$ edges) and isolated vertices $n_1,n_k$.
	It now follows that
	\begin{align*}
		|E'| = |E_x'| + |E_y' \setminus E_x'| \leq  k + (k-3) = 2k - 3.
	\end{align*}
	Hence if $k=n$,
	then $|E|= |E'| \leq 2n-3$ and we are done.
	
	Now suppose instead that $k = n-1$.
	Further suppose that the vertex $n$ is adjacent to more than 2 vertices in $([n],E_x)$.
	Then the only possible structure that $([n],E_x)$ can take without generating a cycle of length 3 (contradicting \Cref{mainlem0}), or generating a cycle of length more than 4 and less than $n-1$ is the following graph with $n=7$ and vertex $n$ as the centre vertex:
\begin{figure}[h]
\begin{center}
\begin{tikzpicture}[scale=1]
		
		\node[vertex] (1) at (0,1) {};
		\node[vertex] (2) at (0.866,0.5) {};
		\node[vertex] (3) at (0.866,-0.5) {};
		\node[vertex] (4) at (0,-1) {};
		\node[vertex] (5) at (-0.866,-0.5) {};
		\node[vertex] (6) at (-0.866,0.5) {};
		\node[vertex] (n) at (0,0) {};
		\node (nname) at (0,0.3) {$n$};
			
		\draw[edge, blue] (1)edge(2);
		\draw[edge, blue] (2)edge(3);
		\draw[edge, blue] (3)edge(4);
		\draw[edge, blue] (4)edge(5);
		\draw[edge, blue] (5)edge(6);
		\draw[edge, blue] (6)edge(1);
		
		\draw[edge, blue] (2)edge(n);
		\draw[edge, blue] (4)edge(n);
		\draw[edge, blue] (6)edge(n);
	\end{tikzpicture}
	\end{center}
	\label{fig:badsub}
\end{figure}	
	\\
	As each cycle of length 4 must generate a clique of size 4 in $G$,
	$G$ must contain a copy of the graph featured in \Cref{fig:forbiddensg},
	contradicting \Cref{l:forbiddensg}.
	Hence we may suppose that $n$ is adjacent to at most two vertices in $([n],E_x)$.
	Furthermore,
	if $n$ is adjacent to vertices $n_i,n_j \in V'$ in $([n],E_x)$ with $i <j$,
	then an analysis of the possible cycles in $([n],E_x)$ show that $n,n_i,n_j$ must be contained in a cycle of 4.
	As $n_1,n_s$ are isolated vertices in the forest $(V',E_y' \setminus E_x')$,
	any cycle of $([n],E_y \setminus E_x)$ has length at most $n-2$ and does not contain $n_1,n_s$.
	Hence by \Cref{mainlem1},
	any cycle of $([n],E_y \setminus E_x)$ has length 4 and is induced by 4 squares.
	However any cycle in $([n],E_y)$ induced by 4 squares sharing a corner will not be a cycle in $([n],E_y \setminus E_x)$, since exactly two edges of the cycle must also be contained in $E_x$.
	Hence $([n],E_y \setminus E_x)$ is a forest.
	
	Without loss of generality, one of three possible cases must now hold:
	\begin{enumerate}
		\item \label{case1} $n$ is adjacent to neither $n_1$ nor $n_s$ in $([n],E_y \setminus E_x)$,
		\item \label{case2} $n$ is adjacent to $n_1$ but not $n_s$ in $([n],E_y \setminus E_x)$,
	or 
	\item \label{case3} $n$ is adjacent to both $n_1$ and $n_s$ in $([n],E_y \setminus E_x)$.
	\end{enumerate}
	First suppose case \ref{case1} holds.
	Since $n$ is adjacent to at most 2 vertices in $([n],E_x)$ and $([n],E_y \setminus E_x)$ is a forest with at least $c_y$ connected components,
	we see that
	\begin{align*}
		|E| = |E_x| + |E_y \setminus E_x| \leq (n-1+2) + (n-c_y) = 2n-c_y +1 \leq 2n-2
	\end{align*}
	and we are done.
	
	Now suppose case \ref{case2} holds.
	By a similar counting method to that above,
	we observe that either $G$ has at most $2n-2$ edges, 
	or $([n],E_y \setminus E_x)$ has exactly 2 connected components (with $n_s$ as an isolated vertex) and $n$ is adjacent to exactly 2 vertices in $([n],E_x)$.
	Suppose for contradiction that the latter holds.
	Let $(a,b,c,n)$ be the cycle of length 4 in $([n],E_x)$ that contains $n$ and its two neighbours $a,c$.
	Since the squares $S_a,S_b,S_c,S_n$ share a corner (\Cref{mainlem1}) and the cycle is ordered $(a,b,c,n)$,
	we must have $\sigma_b = 0$.
	As $n$ is not adjacent to $n_s$ and $(a,b,c)$ is a path in the cycle $(n_1,\ldots,n_k)$,
	we have $a=n_2$, $b =n_1$, $c=n_k$ and $x_n+r_n = x_{n_1} + r_{n_1}$.
	An analysis of the $x$-coordinates of the various centres of $P$ show that $n$ is only adjacent to $n_1,n_2,n_k$ in $G$.
	Since $\{n,n_2\},\{n,n_k\} \in E_x$, the forest $([n],E_y \setminus E_x)$ has at least 3 connected components, contradicting our earlier assumption.
	
	Finally,
	suppose case \ref{case3} holds.
	As $\{n_1,n\},\{n_s,n\} \in E_y \setminus E_x$,
	we have $|x_{n_1} - x_{n}| < r_{n_1} + r_{n}$ and $|x_{n_s} - x_{n}| < r_{n_s} + r_{n}$.
	Using our prior knowledge of the positions of the $x$-coordinates for vertices in $V'$,
	it follows that for each $i \in [k] \setminus \{1,s\}$ we have
	\begin{align*}
		x_n - r_n - r_{n_i} < x_n - r_n < x_{n_1} + r_{n_1} < x_{n_i} < x_{n_s} - r_{n_s} < x_n +r_n < x_n + r_n + r_{n_i},
	\end{align*}
	and so $|x_n-x_{n_i}| < r_n + r_{n_i}$.
	Hence $n$ has no neighbours in $([n],E_x)$ and $|E_x| = n-1$.
	As $([n],E_y \setminus E_x)$ is a forest,
	we now see that
	\begin{align*}
		|E| = |E_x| + |E_y \setminus E_x| \leq (n-1) + (n-1) = 2n-2.
	\end{align*}
	This completes the proof.
\end{proof}

\section{Proof of \texorpdfstring{\Cref{mainthm}}{Theorem 1.1}}\label{sec:main}

Before we prove \Cref{mainthm},
we first require the following two technical lemmas.

\begin{lemma}\label{l:suff}
	Suppose that for a given set of radii $r_1,\ldots,r_n$ with $n \geq 2$,
	there exists a homothetic square packing with radii $r_1,\ldots,r_n$ and $k$ contacts.
	Then for any choice of $r_{n+1} >0$,
	there exists a homothetic square packing with radii $r_1,\ldots,r_{n+1}$ and at least $k+2$ contacts.
\end{lemma}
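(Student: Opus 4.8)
The plan is to leave the given packing $P$ essentially unchanged and to attach one further square $S_{n+1}$ of radius $r_{n+1}$ so that it is in contact with at least two squares of $P$ while meeting the interior of none of them. The resulting configuration is then a homothetic packing of $n+1$ squares that keeps all $k$ old contacts and gains at least two, so it has at least $k+2$ contacts.

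First I would reduce to the case that the contact graph of $P$ is connected: while it is not, translate one connected component far away from the rest and slide it straight back in until it first meets another component. Such a move is a rigid translation of a whole component, so it preserves interior-disjointness and every contact internal to a component, and it creates at least one new contact; after finitely many moves we obtain a packing $P'$ with the same $n$ radii, a connected contact graph, and at least $k$ contacts. So assume $P$ itself is connected.

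Let $S_a$ be the square of $P$ with the highest top edge (maximal $y_a+r_a$). Since the bodies are $\ell_\infty$-balls, the set $\Gamma$ of centres $q$ at which $S_{n+1}$ is tangent to $S_a$ with disjoint interiors is exactly the $\ell_\infty$-sphere $\{q:\|q-p_a\|_\infty=r_{n+1}+r_a\}$, a topological circle, and the subset $\Gamma_0\subseteq\Gamma$ of centres at which $S_{n+1}$ additionally meets no interior $S_i^\circ$ is closed (each constraint $\|q-p_i\|_\infty\ge r_{n+1}+r_i$ is closed). Two observations then finish the argument. First, $\Gamma_0\neq\varnothing$: placing $S_{n+1}$ squarely on top of $S_a$, with centre $(x_a,\,y_a+r_a+r_{n+1})$, puts $S_{n+1}$ inside the half-plane $\{y\ge y_a+r_a\}$, which is interior-disjoint from every $S_i$ because $S_a$ has the highest top edge. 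Second, $\Gamma_0\neq\Gamma$: as $P$ is connected with $n\ge 2$, $S_a$ has a neighbour $S_b$, and a one-line $\ell_\infty$ computation shows the circle $\Gamma$ comes within distance $|r_{n+1}-r_b|<r_{n+1}+r_b$ of $p_b$, hence dips into the open $\ell_\infty$-ball of radius $r_{n+1}+r_b$ about $p_b$; centres in that dip lie in $\Gamma\setminus\Gamma_0$. A non-empty proper closed subset of a circle contains a point $p^\ast$ that is a limit of points of its complement, and by finiteness of $P$ together with continuity, at $p^\ast$ the square $S_{n+1}$ is tangent to $S_a$, tangent to some further square $S_{b'}\neq S_a$, and meets no interior; adding $S_{n+1}$ with centre $p^\ast$ yields the desired packing.

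I expect the two points needing care to be the connectedness reduction — without it the topmost square of a disconnected packing can be isolated, so no neighbour obstructs $S_{n+1}$ and $\Gamma_0$ might be all of $\Gamma$ — and the verification that $\Gamma_0$ is a \emph{proper} subset of $\Gamma$, which rests on the estimate $|r_{n+1}-r_b|<r_{n+1}+r_b$ and is precisely what forces $S_{n+1}$, as it is slid around $S_a$, to become pinned against a second square. Closedness of $\Gamma_0$, non-emptiness via the exposed top edge of $S_a$, and the "endpoint of an arc" step are routine.
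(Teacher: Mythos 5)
Your argument is correct and follows essentially the same route as the paper's proof: reduce to a connected contact graph by translating components, then locate a position for $S_{n+1}$ on the $\ell_\infty$-sphere of tangent centres around one square by taking a relative boundary point of the closed, non-empty, proper subset of admissible positions. The only cosmetic difference is that you anchor to the topmost square and exhibit an admissible position explicitly, whereas the paper starts from an arbitrary point of the boundary of the free region and then restricts to the tangency sphere of whichever square it touches; both hinge on the same connectedness and boundary-point observations.
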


\begin{proof}
	Let $P= \{S_1,\ldots,S_n\}$ be a homothetic square packing with contact graph $G=([n],E)$, radii $r_1,\ldots,r_n$ and centres $p_1,\ldots, p_n$,
	where $|E| = k$.
	We may assume that $G$ is connected;
	indeed if it was not, we could translate one connected component of $P$ until it was in contact with another and increase the amount of contacts by at least 1.
	Define the closed set
	\begin{align*}
		X := \left\{ z \in \mathbb{R}^2 : (r_{n+1} S^\circ + z) \cap S_i = \emptyset \text{ for all } i \in [n] \right\}.
	\end{align*}
	Note that for any point $z \in X$,
	the interior of the set $r_{n+1} S + z$ will not intersect $\bigcup_{i=1}^n S_i$,
	and the set $r_{n+1} S + z$ will intersect $\bigcup_{i=1}^n S_i$ if and only if $z \in \partial X$.
	Hence for any $z \in \partial X$,
	the set $\{S_1,\ldots,S_n , r_{n+1} S + z\}$ will be a homothetic square packing with at least $k+1$ contacts.
	It follows that we now need only find a point $z \in \partial X$  such that $r_{n+1} S + z$ is in contact with at least two squares in $P$.
	
	Choose any point $z' \in \partial X$.
	If the square $r_{n+1} S +z'$ is in contact with two or more squares in $P$ then we are done.
	Suppose that $r_{n+1} S +z'$ is in contact with exactly one square $S_i$.
	Given $\partial S$ is the boundary of the standard square,
	$z'$ is an element of the set $C := (r_i + r_{n+1})\partial S + p_i$.
	It is immediate that $C \cap X^\circ = \emptyset$.
	As the sets $C$ and $\partial X$ are closed,
	the set $C \cap \partial X$ is a non-empty closed subset of $C$.
	The boundary of $C \cap \partial X$ with respect to the ambient space $C$ exists as $C \not\subset \partial X$;
	indeed if $C \subset \partial X$,
	then $S_i$ would not be in contact with any other square in $P$,
	contradicting that $G$ is connected and $n \geq 2$.
	Choose a point $z \in C$.
	If $z$ is not contained in $C \setminus \partial X$ then, since $C \cap X^\circ = \emptyset$, $z \notin X$ and so the set $r_{n+1} S + z$ will be in contact with $S_i$ and intersect the interior of another square $S_j \neq S_i$.
	If $z$ is in the interior of $C \cap \partial X$ with respect to the ambient space $C$,
	then $r_{n+1} S + z$ is in contact with $S_i$ and but it will not intersect any other square in $P$.
	Hence if we choose a point $z$ in the boundary of $C \cap \partial X$ with respect to the ambient space $C$,
	then $r_{n+1} S + z$ will be in contact with at least two squares in $P$.
\end{proof}

\begin{lemma}\label{l:ygraph}
	Let $P$ be a homothetic square packing with contact graph $G=([n],E)$, radii $r_1,\ldots,r_n$ and centres $p_1,\ldots, p_n$.
	Suppose that for some $1 \leq s \leq n-1$,
	the following holds:
	\begin{enumerate}
		\item $\sum_{i=1}^s r_i = \sum_{i=s+1}^n r_i$,
		\item $p_1 = (r_1,r_1)$ and $p_{s+1} = (r_{s+1},-r_{s+1})$, and
		\item $p_i = (r_i + \sum_{j=1}^{i-1} 2r_j, r_i)$ for all $2 \leq i \leq s$ and $p_i = (r_i + \sum_{j=s+1}^n 2r_j, -r_i)$ for all $s+2 \leq i \leq n$.
	\end{enumerate}
	Then the graph $([n],E_y)$ is connected.
	(See \Cref{fig:2lines} for an example of such a packing.)
\end{lemma}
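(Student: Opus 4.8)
The plan is to make the packing completely explicit, read off $E_y$ combinatorially, and then deduce connectivity from a simple fact about tilings of a line segment. Unwinding the hypotheses, set $R := \sum_{i=1}^s r_i = \sum_{i=s+1}^n r_i$ (using (i)), $a_i := 2\sum_{j=1}^i r_j$ for $0 \le i \le s$, and $b_j := 2\sum_{\ell=s+1}^j r_\ell$ for $s \le j \le n$. Conditions (ii)--(iii) say exactly that $S_i = [a_{i-1},a_i] \times [0,2r_i]$ for $1 \le i \le s$ and $S_j = [b_{j-1},b_j] \times [-2r_j,0]$ for $s+1 \le j \le n$. So the ``top'' squares $S_1,\dots,S_s$ stack left-to-right along the positive side of the $x$-axis, each with its bottom edge on the axis; the ``bottom'' squares $S_{s+1},\dots,S_n$ do the same below the axis; and the closed intervals $I_i := [a_{i-1},a_i]$ ($1 \le i \le s$) and $J_j := [b_{j-1},b_j]$ ($s+1 \le j \le n$) each tile the \emph{same} segment $[0,2R]$ --- this last point being where (i) is used.

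Next I would identify $E_y$. For a top square $S_i$ and a bottom square $S_j$ we always have $|y_i - y_j| = r_i + r_j$, so any contact between a top and a bottom square is automatically a $y$-direction contact; moreover $S_i \cap S_j = (I_i \cap J_j) \times \{0\}$, so the squares $S_i,S_j$ are in contact if and only if $I_i \cap J_j \neq \emptyset$ (as closed intervals --- they may meet only at an endpoint, corresponding to a corner contact, which is still an edge of $E_y$). On the other hand, two squares in the same chain have $y$-coordinates of equal sign, say $\pm r_i$ and $\pm r_{i'}$, so $|y_i - y_{i'}| = |r_i - r_{i'}| < r_i + r_{i'}$ and they cannot form a $y$-direction contact. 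Hence $([n],E_y)$ is exactly the bipartite graph on $\{1,\dots,s\} \cup \{s+1,\dots,n\}$ with $i \sim j$ iff $I_i \cap J_j \neq \emptyset$.

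It then suffices to prove the following general statement: if two finite families of closed subintervals each tile a segment $[0,L]$ into consecutive pieces, then the bipartite ``intersection graph'' of the two families is connected. I would prove this by a left-to-right sweep: let $0 = z_0 < z_1 < \dots < z_m = L$ be the breakpoints of the common refinement; each open piece $(z_{p-1},z_p)$ lies in a unique interval $I_{\alpha_p}$ of the first family and a unique interval $J_{\beta_p}$ of the second, giving an edge $\{I_{\alpha_p}, J_{\beta_p}\}$. Passing from $p$ to $p+1$ changes at most one of $\alpha_p,\beta_p$, except when $z_p$ is a breakpoint of both families, in which case the corner intersection $I_{\alpha_p} \cap J_{\beta_{p+1}} = \{z_p\}$ provides a connecting edge. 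Thus all the edges $\{I_{\alpha_p},J_{\beta_p}\}$ (together with these connectors) lie in a single component, and since every $I_i$ and every $J_j$ occurs among the $I_{\alpha_p}$, $J_{\beta_p}$, that component is the whole graph. Applying this with $L = 2R$ completes the proof.

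The step that needs the most care --- and which I expect to be the main obstacle --- is the bookkeeping around the corner contacts: one must track intersections of \emph{closed} intervals rather than overlaps of positive length, and recognize that whenever the two chains break at the same $x$-coordinate, the graph stays connected precisely because the single-point (corner) contact there is counted as an edge of $E_y$.
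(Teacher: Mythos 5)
Your proposal is correct and takes essentially the same route as the paper: both arguments reduce to the elementary fact that two interval tilings of the same segment $[0,2R]$ have a connected bipartite intersection graph, and both rely on single-point (corner) intersections still counting as edges of $E_y$. The only difference is organizational --- the paper finds, for each consecutive pair of top squares, a common bottom neighbour (the rightmost bottom interval meeting $I_i$ also meets $I_{i+1}$), whereas you sweep the common refinement --- but the two arguments produce the same connected spanning subgraph.
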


\begin{proof}
	Fix $p_i = (x_i,y_i)$ for each $i \in [n]$.
	Define for each $i \in [n]$ the closed interval
	\begin{align*}
		I_i := [x_i- r_i ,x_i +  r_i].
	\end{align*}
	Since $\bigcup_{i=1}^s I_i = \bigcup_{i=s+1}^n I_i$,
	we observe that every interval $I_i$ for $i \leq s$ must intersect at least one set $I_j$ for $j \geq s+1$.
	Choose any $i \in [s-1]$ and let $j \in [n]$ be the largest index such that $I_i \cap I_j \neq \emptyset$;
	by our previous observation we note that $j \geq s+1$.
	Suppose that $x_i +r_i \geq x_j + r_j$.
	If $j =n$, then $i = s$,
	contradicting that $i \in [s-1]$.
	If $j <n$ then $I_i \cap I_{j+1} \neq \emptyset$ as $x_i +r_i \geq x_j + r_j = x_{j+1} - r_{j+1}$,
	contradicting the maximality of $j$.
	Hence $x_i +r_i < x_j + r_j$.
	Since $x_{i+1} - r_{i+1} = x_i +r_i$,
	it follows that $I_j \cap I_{i+1} \neq \emptyset$.
	From this we can deduce that for each $i \in [n]$ where $i \leq s-1$,
	there exists $j \in [n]$ such that $j \geq s+1$ and $\{i,j\},\{i+1,j\} \in E_y$.
	By a similar technique we can show that for each $j \in [n]$ where $s+1 \leq j \leq n-1$,
	there exists $i \in [n]$ where $i \leq s$ such that $\{i,j\},\{i,j+1\} \in E_y$.
	With this we can construct two paths $P_1,P_2 \in ([n],E_y)$ such that $P_1$ contains every vertex $1 \leq i \leq s$ and at least one vertex $j \geq s+1$,
	and $P_2$ contains every vertex $s+1 \leq j \leq n$ and at least one vertex $i \leq s$.
	Hence the graph $([n],E_y)$ is connected.	
\end{proof}

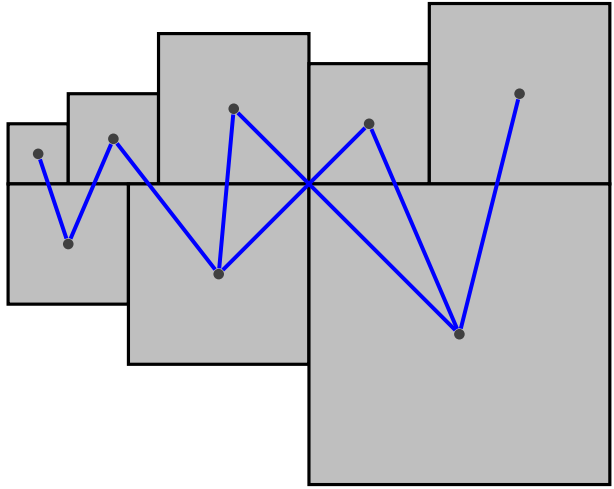
\begin{figure}[ht]
\begin{center}
\begin{tikzpicture}[scale=0.4]
		
		\begin{scope}[shift={(1,1)},scale=1,rotate=0]
		    \draw[very thick, fill=lightgray] (-1,-1) rectangle (1,1);
			\node[vertex] (1) at (0,0) {};
		\end{scope}
			
		\begin{scope}[shift={(3.5,1.5)},scale=1.5,rotate=0]
		    \draw[very thick, fill=lightgray] (-1,-1) rectangle (1,1);
		    \node[vertex] (2) at (0,0) {};
		\end{scope}
			
		\begin{scope}[shift={(7.5,2.5)},scale=2.5,rotate=0]
		    \draw[very thick, fill=lightgray] (-1,-1) rectangle (1,1);
		    \node[vertex] (3) at (0,0) {};
		\end{scope}
			
		\begin{scope}[shift={(12,2)},scale=2,rotate=0]
		    \draw[very thick, fill=lightgray] (-1,-1) rectangle (1,1);
		    \node[vertex] (4) at (0,0) {};
		\end{scope}
			
		\begin{scope}[shift={(17,3)},scale=3,rotate=0]
		    \draw[very thick, fill=lightgray] (-1,-1) rectangle (1,1);
		    \node[vertex] (5) at (0,0) {};
		\end{scope}
			
		\begin{scope}[shift={(2,-2)},scale=2,rotate=0]
		    \draw[very thick, fill=lightgray] (-1,-1) rectangle (1,1);
		    \node[vertex] (6) at (0,0) {};
		\end{scope}
			
		\begin{scope}[shift={(7,-3)},scale=3,rotate=0]
		    \draw[very thick, fill=lightgray] (-1,-1) rectangle (1,1);
		    \node[vertex] (7) at (0,0) {};
		\end{scope}
			
		\begin{scope}[shift={(15,-5)},scale=5,rotate=0]
		    \draw[very thick, fill=lightgray] (-1,-1) rectangle (1,1);
		    \node[vertex] (8) at (0,0) {};
		\end{scope}

		\draw[edge, blue] (1)edge(6);
		\draw[edge, blue] (2)edge(6);
		\draw[edge, blue] (2)edge(7);
		\draw[edge, blue] (3)edge(7);
		\draw[edge, blue] (3)edge(8);
		\draw[edge, blue] (4)edge(7);
		\draw[edge, blue] (4)edge(8);
		\draw[edge, blue] (5)edge(8);
		
		
	\end{tikzpicture}
	\end{center}
	\caption{An example of the construction from \Cref{l:ygraph}. The radii of the squares on the top row from left to right are $r_1 = 1$, $r_2 = 1.5$, $r_3 = 2.5$, $r_4 = 2$ and $r_5 = 3$, and the radii of the squares on the bottom row from left to right are $r_6=2$, $r_7 = 3$, $r_8 = 5$.
	As can be seen, the graph $([n],E_y)$ for this homothetic square packing is connected.}
	\label{fig:2lines}
\end{figure}

With this we are finally ready to prove \Cref{mainthm}.

\begin{proof}[Proof of \Cref{mainthm}]
	Suppose that \ref{mainthm2} holds,
	i.e., the radii satisfy the weak generic condition.
	Let $P$ be a homothetic square packing with contact graph $G=([n],E)$, radii $r_1,\ldots,r_n$ and centres $p_1,\ldots,p_n$.
	If either $([n],E_x)$ or $([n],E_y)$ contains a cycle with at least $n-1$ vertices,
	then $|E| \leq 2n-2$ by \Cref{mainlem2}.
	Suppose that any cycle in either $([n],E_x)$ or $([n],E_y)$ has length at most $n-2$.
	By \Cref{mainlem0,mainlem1},
	every cycle in $([n],E_x)$ (respectively, $([n],E_y)$) has length 4 and is generated by 4 squares sharing a corner.
	Let $C_1^x,\ldots,C_k^x \subset E_x$ and $C_1^y,\ldots,C_k^y \subset E_y$ be the cycles of $([n],E_x)$ and $([n],E_y)$ labelled so that for each $1 \leq i \leq k$ we have $C_i^x \cap C_i^y = \{ e_i, f_i\}$ for some edges $e_i,f_i$;
	this corresponds to the cycles $C_i^x$ and $C_i^y$ being generated by the same 4 squares sharing a corner.
	Define $E_x' := E_x \setminus \{e_1,\ldots,e_k\}$ and $E_y' := E_y \setminus \{f_1,\ldots,f_k\}$.
	It is immediate that $E =  E'_x \cup E'_y$ and both $([n],E_x')$ and $([n],E_y')$ are trees,
	and so
	\begin{align*}
		|E| \leq |E'_x| + |E_y'| \leq (n-1) + (n-1) = 2n - 2.
	\end{align*}
	Hence \ref{mainthm1} holds.
	
	Now suppose \ref{mainthm2} does not hold;
	i.e., there exists a map $\sigma : [n] \rightarrow \{-1,0,1\}$ with $\sum_{i=1}^n \sigma_i r_i = 0$ and $\sigma_{n_0} \neq 0$, $\sigma_{n_1} = \ldots = \sigma_{n_4} = 0$ for distinct vertices $n_0,\ldots,n_4 \in [n]$.
    By reordering the indices we may assume that
    $\sigma_1,\ldots,\sigma_s = 1$, $\sigma_{s+1},\ldots,\sigma_{n-t} = -1$ and $\sigma_{n-t+1},\ldots,\sigma_n = 0$ for some $s \geq 1$ and $t \geq 4$.
    With this reordering we have $\sum_{i=1}^s r_i = \sum_{i=s+1}^{n-t} r_i$.
    By \Cref{l:suff},
    it is sufficient to consider the case where $t = 4$;
    if there exists homothetic square packing with at least $2(n-t+4) - 1$ contacts using only the first $n-t+4$ radii,
    then there exists a homothetic square packing with at least $2n-1$ contacts using all the radii.
    
	For each $1 \leq i \leq n$, set $p_i = (x_i,y_i)$,
	where
	\begin{align*}
		x_i :=
		\begin{cases}
			r_1 &\text{if } i=1, \\
			r_i + \sum_{j=1}^{i-1} 2 r_j &\text{if } 2 \leq i \leq s, \\
			r_{s+1} &\text{if } i=s+1, \\
			r_i + \sum_{j=s+1}^{i-1} 2 r_j &\text{if } s+2 \leq i \leq n-4, \\
			-r_i &\text{if } i = n-3 \text{ or } i = n-2, \\
			r_i + \sum_{j=1}^{s} 2 r_j &\text{if } i = n-1 \text{ or } i = n,
		\end{cases}
	\end{align*}
	\begin{align*}
		y_i :=
		\begin{cases}
			r_i &\text{if } 1 \leq i \leq s \text{ or } i = n-3\text{ or } i = n-1, \\
			-r_i &\text{if } s+1 \leq i \leq n-4 \text{ or } i = n-2\text{ or } i = n.
		\end{cases}
	\end{align*}
    The family $P = \{ r_i S + p_i : i \in [n]\}$ now defines a homothetic square packing with contact graph $G = ([n],E)$;
    see \Cref{fig:construction} for an example of the construction.
    First note that $E_x \setminus E_y$ contains the paths $(n-3, 1, \ldots, s, n-1)$ and $(n-2, s+1, \ldots, n-4, n)$,
    hence $|E_x \setminus E_y| \geq n-2$.
    By \Cref{l:ygraph},
    the graph $([n],E_y)$ restricted to the vertices $1,\ldots,n-4$ is connected,
    and hence has at least $n-5$ edges.
    There are also 6 extra edges in $([n],E_y)$;
    $\{1, n-2\}, \{s+1, n-3\},\{ s, n\}, \{n-4, n-1\} , \{n-2,n-3\}, \{n-1,n\}$.
    Hence $|E_y| = n-5 + 6 = n+1$.
    It now follows that $|E| \geq 2n -1$,
    and so \ref{mainthm1} does not hold. 
\end{proof}

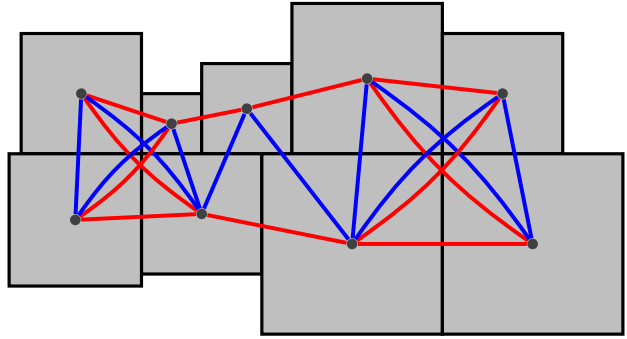
\begin{figure}[ht]
\begin{center}
\begin{tikzpicture}[scale=0.4]
		
		\begin{scope}[shift={(1,1)},scale=1,rotate=0]
		    \draw[very thick, fill=lightgray] (-1,-1) rectangle (1,1);
			\node[vertex] (1) at (0,0) {};
		\end{scope}
			
		\begin{scope}[shift={(3.5,1.5)},scale=1.5,rotate=0]
		    \draw[very thick, fill=lightgray] (-1,-1) rectangle (1,1);
		    \node[vertex] (2) at (0,0) {};
		\end{scope}
			
		\begin{scope}[shift={(7.5,2.5)},scale=2.5,rotate=0]
		    \draw[very thick, fill=lightgray] (-1,-1) rectangle (1,1);
		    \node[vertex] (3) at (0,0) {};
		\end{scope}

		\begin{scope}[shift={(2,-2)},scale=2,rotate=0]
		    \draw[very thick, fill=lightgray] (-1,-1) rectangle (1,1);
		    \node[vertex] (4) at (0,0) {};
		\end{scope}
			
		\begin{scope}[shift={(7,-3)},scale=3,rotate=0]
		    \draw[very thick, fill=lightgray] (-1,-1) rectangle (1,1);
		    \node[vertex] (5) at (0,0) {};
		\end{scope}

		\begin{scope}[shift={(-2,2)},scale=2,rotate=0]
		    \draw[very thick, fill=lightgray] (-1,-1) rectangle (1,1);
		    \node[vertex] (6) at (0,0) {};
		\end{scope}
			
		\begin{scope}[shift={(-2.2,-2.2)},scale=2.2,rotate=0]
		    \draw[very thick, fill=lightgray] (-1,-1) rectangle (1,1);
		    \node[vertex] (7) at (0,0) {};
		\end{scope}

		\begin{scope}[shift={(12,2)},scale=2,rotate=0]
		    \draw[very thick, fill=lightgray] (-1,-1) rectangle (1,1);
		    \node[vertex] (8) at (0,0) {};
		\end{scope}
			
		\begin{scope}[shift={(13,-3)},scale=3,rotate=0]
		    \draw[very thick, fill=lightgray] (-1,-1) rectangle (1,1);
		    \node[vertex] (9) at (0,0) {};
		\end{scope}

		\draw[edge, blue] (1)edge(4);
		\draw[edge, blue] (2)edge(4);
		\draw[edge, blue] (2)edge(5);
		\draw[edge, blue] (3)edge(5);
		\draw[edge, blue] (6)edge(7);
		\draw[edge, blue] (8)edge(9);

		\draw[edge, red] (6)edge(1);
		\draw[edge, red] (1)edge(2);
		\draw[edge, red] (2)edge(3);
		\draw[edge, red] (3)edge(8);		
		\draw[edge, red] (7)edge(4);
		\draw[edge, red] (4)edge(5);
		\draw[edge, red] (5)edge(9);
		
		\path[edge,red] (6) edge [bend right=10] node {} (4);
		\path[edge,blue] (6) edge [bend left=10] node {} (4);
		
		\path[edge,red] (7) edge [bend right=10] node {} (1);
		\path[edge,blue] (7) edge [bend left=10] node {} (1);
		
		\path[edge,red] (3) edge [bend right=10] node {} (9);
		\path[edge,blue] (3) edge [bend left=10] node {} (9);
		
		\path[edge,red] (5) edge [bend right=10] node {} (8);
		\path[edge,blue] (5) edge [bend left=10] node {} (8);
	\end{tikzpicture}
	\end{center}
	\caption{An example of the construction from \Cref{mainthm} with $17 > 2\cdot 9 - 2$ contacts. The construction is possible because the sum of the radii for the middle three squares on the top row (from left to right; 1, 1.5, 2.5) is equal to the sum of the radii for the middle two squares on the bottom row (from left to right; 2, 3), and the radii sum equality does not use the radii of at least four squares. The four squares that are not involved in the summation are then placed on the left and right to form the two cliques of size 4.}
	\label{fig:construction}
\end{figure}

\section{Square packings allowing rotations and face-to-face contacts}\label{sec:facetoface}

Throughout the paper we have only been interested in homothetic square packings.
In this section we will relax the condition that each square is a homothetic copy of $S$.
A \emph{similar copy of $S$} is a set $r R_\theta S + p$,
where $r>0$, $p \in \mathbb{R}^2$, $\theta \in [0,\pi/2)$ and $R_\theta$ is the $2 \times 2$ matrix representing anticlockwise rotation of the plane by $\theta$ radians.
With this, we define a \emph{packing of $n$ squares}, or \emph{square packing} for short, to be a set $P = \{S_1,\ldots,S_n\}$ of similar copies of $S$ with pairwise disjoint interiors.
The \emph{centres}, \emph{radii} and \emph{angles} of a square packing $P = \{S_1,\ldots,S_n\}$ with $S_i = r_i R_{\theta_i}S +p_i$ for each $i \in [n]$ will be the vectors $p_1,\ldots,p_n$, positive scalars $r_1,\ldots,r_n$ and the angles $\theta_1,\ldots, \theta_n$ respectively.

Interestingly, the amount of contacts of a packing of $n$ squares is not bounded by $2n-2$ even when the radii do not satisfy any polynomial equation with rational coefficients.

\begin{proposition}\label{p:sim}
	For each $n \geq 5$,
	there exists a packing of $n$ squares with algebraically independent radii and more than $2n-2$ contacts.
\end{proposition}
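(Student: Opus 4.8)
The plan is to produce an explicit packing in the smallest case $n=5$ and then bootstrap to all larger $n$ via the square-packing analogue of \Cref{l:suff}. For $n=5$ we want a contact graph with $9 = 2\cdot 5 - 1$ edges; since $9$ edges on $5$ vertices forces the graph to be $K_5$ minus an edge, it necessarily contains a copy of $K_4$, and in fact two copies of $K_4$ sharing a common triangle. By \Cref{l:4share} and \Cref{l:cliques2share}, neither of these is possible for \emph{homothetic} squares, so rotations are essential. Concretely, I would take three mutually–touching squares $S_1,S_2,S_3$ arranged in a ``pinwheel'' around a small triangular cavity (this forces the three of them to sit at three different angles), place a fourth square $S_4$ inside the cavity touching all three of $S_1,S_2,S_3$, and then push a fifth square $S_5$ in through the gap between two consecutive pinwheel arms, say between $S_1$ and $S_2$, so that it touches $S_1$, $S_2$ and $S_4$ but does not reach $S_3$. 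This realises the contact graph $K_5 \setminus \{3,5\}$, i.e.\ the nine contacts $\{1,2\},\{1,3\},\{2,3\},\{1,4\},\{2,4\},\{3,4\},\{1,5\},\{2,5\},\{4,5\}$, which is two $K_4$'s (on $\{1,2,3,4\}$ and $\{1,2,4,5\}$) glued along $\{1,2,4\}$.

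The reason this does not conflict with ``generic radii give at most $2n-2$ contacts'' is that the closing-up equations of the configuration (the three pinwheel tangencies, the three cavity tangencies, the three intrusion tangencies) involve the \emph{angles} and positions of the squares, not just the radii. Hence no algebraic relation amongst $r_1,\dots,r_5$ is forced: the space of such configurations is a manifold of dimension strictly bigger than $5$, and a standard countability/Baire-category argument then lets us choose the five radii to be algebraically independent (the radius-tuples lying in the image of a chart with degenerate derivative, or satisfying some fixed non-trivial polynomial, form a meagre set). This is in contrast to the homothetic case, where every contact is a linear equation in the radii with coefficients in $\{-1,0,1\}$, and to the disc case, where a $K_4$ of discs forces the Descartes circle relation.

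To go from $n=5$ to arbitrary $n\ge 5$ I would use the obvious analogue of \Cref{l:suff} for general square packings: given a square packing with $k$ contacts and $n\ge 2$ squares and any $r_{n+1}>0$, one can add an $(n+1)$st square --- say an axis-aligned one --- of radius $r_{n+1}$ in contact with at least two existing squares, giving a packing with at least $k+2$ contacts. The proof is the same as for \Cref{l:suff}, the only change being that the locus of admissible centres of the new square in contact with $S_i$ is now the boundary of the Minkowski sum $r_{n+1}S \oplus (-S_i)$, a convex polygon, rather than an $\ell_\infty$-sphere; the topological argument about moving along this curve until a second contact appears goes through verbatim. Starting from the $n=5$ packing with $9 \ge 2\cdot 5 - 1$ contacts and algebraically independent radii, and applying this lemma repeatedly with fresh algebraically independent radii, we obtain for every $n\ge 5$ a packing of $n$ squares with algebraically independent radii and at least $2n-1 > 2n-2$ contacts.

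The step I expect to be the main obstacle is the verification of the $n=5$ base configuration: confirming that the pinwheel-plus-cavity-plus-intruder really can be assembled as a packing (interiors pairwise disjoint, the nine claimed pairs genuinely in contact, the pair $\{3,5\}$ kept strictly apart), and, more delicately, that the construction is flexible enough for the radii to be prescribed as an algebraically independent tuple rather than being pinned to some proper subvariety. A careful parameter count (five radii, five angles, ten coordinates of centres, modulo the three-dimensional group of rigid motions, against the nine contact equations) together with an implicit-function-theorem/transversality argument at a concrete explicitly chosen configuration should close this, but it is the part that demands genuine care.
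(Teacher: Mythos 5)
Your overall architecture---realise $K_5$ minus an edge (two $4$-cliques glued along a triangle, a configuration that \Cref{l:cliques2share} forbids homothetically) by allowing rotated squares, perturb to make the radii algebraically independent, then grow the packing two contacts at a time---is the same shape as the paper's argument, and your extension of \Cref{l:suff} to non-homothetic packings is sound. But there is a genuine gap exactly where you flag one: the $n=5$ base configuration is never actually constructed, and the transversality claim that lets you prescribe the radii is never checked; these two steps are the entire content of the proposition. Your pinwheel also makes both steps harder than they need to be. Every one of the nine contacts involves the angles of the participating squares, and the radii of the two inscribed squares $S_4,S_5$ are pinned (up to a one-parameter rotation of each) by the cavity cut out by $S_1,S_2,S_3$; so you genuinely must verify that the projection of the nine-equation contact variety onto $(r_1,\dots,r_5)$ is a submersion, and you have no explicit configuration at which to compute the rank. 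One small slip along the way: \Cref{l:4share} does not make a $4$-clique impossible for homothetic squares---it exhibits one (four squares sharing a corner); only the gluing of two $4$-cliques along a common triangle is obstructed, via \Cref{l:cliques2share}.

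The paper's base configuration removes almost all of this difficulty, and you may want to compare. Take four axis-aligned squares of \emph{arbitrary} radii sharing the corner at the origin, one per quadrant: this $4$-clique exists for every choice of $r_1,\dots,r_4$ with no equations imposed. Then add a single square rotated by $\pi/4$ whose tilted edge passes through a corner of one of the four and whose own corners land on edges of two others: that is three contact equations in the three free parameters (centre and angle) of the tilted square, solved at an explicit rational configuration and hence, by the implicit function theorem, for all nearby radii. For $n>5$ the paper appends an explicit axis-aligned chain contributing two contacts per new square, so no induction lemma is needed. The admissible radius vectors therefore contain an open neighbourhood of an explicit point, and every non-empty open subset of $\mathbb{R}^n_{>0}$ contains an algebraically independent tuple. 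To salvage your version you would need either to write down coordinates for the pinwheel and carry out the rank computation, or to switch to a base configuration in which, as above, all but a bounded number of the contact equations hold identically in the radii.
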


\begin{proof}
	Fix $P = \{S_1,\ldots,S_n\}$ to be the square packing with radii $r$, centres $p$ and angles $\theta$ defined as follows:
	\begin{enumerate}
		\item $r_1 := 1/3$, $r_3 := 2/3$, $r_n :=  2\sqrt{2}/3$ and $r_i:=1$ otherwise.
		\item $\theta_n := \pi/4$ and $\theta_i := 0$ otherwise.
		\item $p_1 := (-1/3,1/3)$, $p_2 := (-1,-1)$, $p_3 := (2/3, -2/3)$, $p_4 := (1,1)$,
		for each $5 \leq i \leq n-1$ we have 
		\begin{align*}
			p_i :=
			\begin{cases}
			    (i - 4 + 4/3 , -1) &\text{if $i$ is odd},\\
			    (i-3 , 1) &\text{if $i$ is even},
			\end{cases}
		\end{align*}				
		and $p_n = (-4/3,4/3)$.
	\end{enumerate}
	See \Cref{fig:1sqrotated} (left) for the described square packing $P$ with $n = 7$.
	We note that $P$ has $2n-1$ contacts. 
	Furthermore,
	for small perturbations of the vector $r$ where $r_1$ is decreased,
	we can always form a square packing similar to that indicated in \Cref{fig:1sqrotated} (right),
	which will also always have $2n-1$ contacts.
	Hence there exists a packing of $n$ squares with algebraically independent radii and $2n-1$ contacts.
\end{proof}

\begin{figure}[ht]
\begin{center}
\begin{tikzpicture}[scale=1]
		\begin{scope}[shift={(-1/3,1/3)},scale=1/3,rotate=0]
		    \draw[very thick, fill=lightgray] (-1,-1) rectangle (1,1);
		\end{scope}
		
		\begin{scope}[shift={(-1,-1)},scale=1,rotate=0]
		    \draw[very thick, fill=lightgray] (-1,-1) rectangle (1,1);
		\end{scope}
		
		\begin{scope}[shift={(2/3,-2/3)},scale=2/3,rotate=0]
		    \draw[very thick, fill=lightgray] (-1,-1) rectangle (1,1);
		\end{scope}
		
		\begin{scope}[shift={(1,1)},scale=1,rotate=0]
		    \draw[very thick, fill=lightgray] (-1,-1) rectangle (1,1);
		\end{scope}
		
		\begin{scope}[shift={(7/3,-1)},scale=1,rotate=0]
		    \draw[very thick, fill=lightgray] (-1,-1) rectangle (1,1);
		\end{scope}
		
		\begin{scope}[shift={(3,1)},scale=1,rotate=0]
		    \draw[very thick, fill=lightgray] (-1,-1) rectangle (1,1);
		\end{scope}
		
		\begin{scope}[shift={(-4/3,4/3)},scale=0.943,rotate=45]
		    \draw[very thick, fill=lightgray] (-1,-1) rectangle (1,1);
		\end{scope}

		
		
	\end{tikzpicture}\qquad\qquad
    \begin{tikzpicture}[scale=1]
		\begin{scope}[shift={(-1/4,1/4)},scale=1/4,rotate=0]
		    \draw[very thick, fill=lightgray] (-1,-1) rectangle (1,1);
		\end{scope}
		
		\begin{scope}[shift={(-1.2,-1.2)},scale=1.2,rotate=0]
		    \draw[very thick, fill=lightgray] (-1,-1) rectangle (1,1);
		\end{scope}
		
		\begin{scope}[shift={(3/5,-3/5)},scale=3/5,rotate=0]
		    \draw[very thick, fill=lightgray] (-1,-1) rectangle (1,1);
		\end{scope}
		
		\begin{scope}[shift={(0.8,0.8)},scale=0.8,rotate=0]
		    \draw[very thick, fill=lightgray] (-1,-1) rectangle (1,1);
		\end{scope}
		
		\begin{scope}[shift={(2,-0.8)},scale=0.8,rotate=0]
		    \draw[very thick, fill=lightgray] (-1,-1) rectangle (1,1);
		\end{scope}
		
		\begin{scope}[shift={(2.6,1)},scale=1,rotate=0]
		    \draw[very thick, fill=lightgray] (-1,-1) rectangle (1,1);
		\end{scope}
		
		\begin{scope}[shift={(-1.24,1.24)},scale=0.943,rotate=21.77]
		    \draw[very thick, fill=lightgray] (-1,-1) rectangle (1,1);
		\end{scope}

		
		
	\end{tikzpicture}
	\end{center}
	\caption{(Left): The square packing with $2n-1$ contacts described in \Cref{p:sim} for $n=7$.
	(Right): A square packing with $2n-1$ contacts which can be formed from the square packing on the left by perturbing the values of the radii.
	All the squares except the top left square maintain their original orientation,
	while the top left square simply needs to rotate slightly to maintain the necessary $2n-1$ contacts.}
	\label{fig:1sqrotated}
\end{figure}
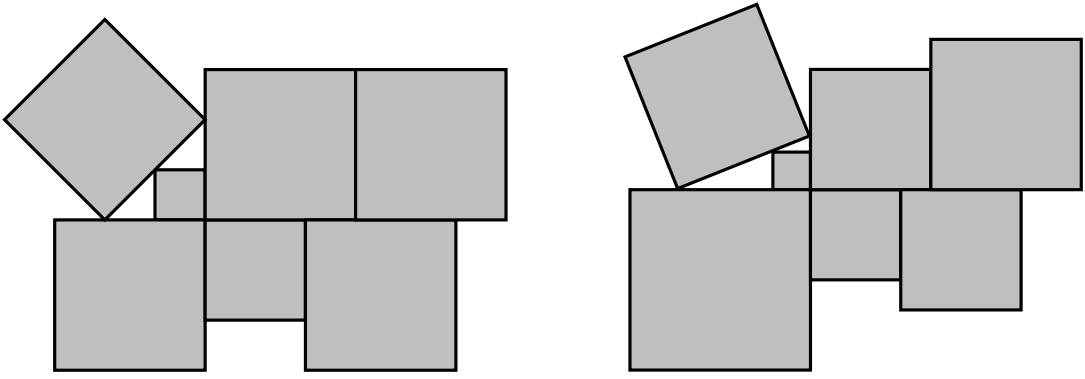

Because of \Cref{p:sim}, we shall restrict which type of contacts we are interested in.
Let $P = \{S_1,\ldots,S_n\}$ be a square packing with $S_i = r_i R_{\theta_i} S +p_i$ for each $i \in [n]$.
We say that the distinct squares $S_i$ and $S_j$ have a \emph{face-to-face contact} if the set $S_i\cap S_j$ is a line segment $[z,z']$ with $z \neq z'$.
It is important to note that if the vertex pair $\{i,j\}$ describe a face-to-face contact, then $\theta_i = \theta_j$.
Another useful observation is the following: 
if $P$ is a homothetic square packing with contact graph $G=([n],E)$,
then the face-to-face contacts of $P$ are exactly the edges in the symmetric difference of $E_x$ and $E_y$.

\begin{corollary}\label{cor:rotatedsquares}
	Let $r_1,\ldots,r_n$ be positive scalars that satisfy the weak generic condition.
    Then every packing of $n$ squares with radii $r_1,\ldots, r_n$ has at most $2n-2$ face-to-face contacts.
    Furthermore, if a given square packing has $2n-2$ face-to-face contacts, 
    then it is a homothetic square packing with no four squares sharing a corner.
\end{corollary}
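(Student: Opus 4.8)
The plan is to reduce the statement to \Cref{mainthm} by partitioning the squares according to their angle. The starting point is the observation already recorded above: a face-to-face contact between $S_i$ and $S_j$ forces $\theta_i = \theta_j$. So I would let $A_1,\ldots,A_t$ be the partition of $[n]$ into \emph{angle classes}, where $i$ and $j$ lie in the same class precisely when $\theta_i = \theta_j$, and write $m_\ell := |A_\ell|$. Every face-to-face contact of $P$ has both endpoints inside a single class $A_\ell$, so it suffices to bound, for each $\ell$, the number of face-to-face contacts among $\{S_i : i \in A_\ell\}$.

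Fix a class $A_\ell$ with common angle $\theta$. Rotating the sub-packing $\{S_i : i \in A_\ell\}$ by $-\theta$ produces a homothetic packing $P_\ell$ of $m_\ell$ squares whose radii are exactly $\{r_i : i \in A_\ell\}$, since a global rotation preserves disjointness of interiors as well as the type (and existence) of every contact. I would then check that this sub-multiset of radii still satisfies the weak generic condition: given $\sigma' : A_\ell \to \{-1,0,1\}$ with at least $4$ zeroes and $\sum_{i \in A_\ell} \sigma'_i r_i = 0$, extend $\sigma'$ to $[n]$ by setting it to $0$ off $A_\ell$; the extension has at least $4$ zeroes and still sums to $0$, so it vanishes by hypothesis, hence $\sigma' \equiv 0$ (when $m_\ell < 4$ the condition is vacuous). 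By \Cref{mainthm}, $P_\ell$ has at most $2m_\ell - 2$ contacts, and since the face-to-face contacts of a homothetic square packing are exactly the edges of $E_x \triangle E_y$, which is contained in $E$, the sub-packing contributes at most $2m_\ell - 2$ face-to-face contacts. Summing over $\ell$, the number of face-to-face contacts of $P$ is at most $\sum_{\ell=1}^t (2m_\ell - 2) = 2n - 2t \le 2n-2$, since $t \ge 1$.

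For the equality case, suppose $P$ has exactly $2n-2$ face-to-face contacts. Then the chain of inequalities above is tight, which forces $t = 1$: all squares share a common angle $\theta$, so rotating by $-\theta$ exhibits $P$ as a homothetic square packing $P'$ on $[n]$. Writing $E, E_x, E_y$ for the contact graph of $P'$ and its colour classes, the number of face-to-face contacts equals $|E_x \triangle E_y| = |E| - |E_x \cap E_y|$, so combining $2n-2 = |E| - |E_x \cap E_y| \le |E| \le 2n-2$ (the last inequality by \Cref{mainthm}) gives $|E| = 2n-2$ and $E_x \cap E_y = \emptyset$. Finally, if four squares of $P'$ shared a corner, then the segment joining the centres of one "diagonal" pair meets the segment joining the centres of the other, so by \Cref{l:4share} and \Cref{l:4squares}\ref{l:4squares3} two of the six edges among those four squares would lie in $E_x \cap E_y$, a contradiction; hence no four squares share a corner.

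The two verifications — that rotating a sub-packing keeps it a homothetic packing with the same contact types, and that the weak generic condition descends to sub-multisets of radii — are routine but genuinely needed, as is the bookkeeping that converts tightness of the sum into $t = 1$. Beyond those there is no real obstacle: the whole statement is \Cref{mainthm} applied class-by-class, and the only mild point requiring care is keeping straight the distinction between a "contact" and a "face-to-face contact" so that the inequality $|E_x \triangle E_y| \le |E|$ is invoked in the correct direction in each half of the argument.
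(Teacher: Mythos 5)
Your proposal is correct and follows essentially the same route as the paper: partition the squares into angle classes, apply \Cref{mainthm} to each class as a homothetic packing to get the bound $\sum_\ell (2m_\ell-2)=2n-2t\le 2n-2$, and in the equality case deduce $t=1$, $E=E_x\triangle E_y$, and hence $E_x\cap E_y=\emptyset$, ruling out four squares sharing a corner. Your explicit checks that the weak generic condition descends to sub-multisets and that rotation preserves contact types are sensible additions that the paper leaves implicit.
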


\begin{proof}
	Let $P$ be a square packing with radii $r_1,\ldots,r_n$, centres $p_1,\ldots,p_n$ and angles $\theta_1,\ldots,\theta_n$.
	Define the equivalence relation $\sim$ on $[n]$ by setting $i \sim j$ if and only if $\theta_i = \theta_j$,
	and set $\tilde{n}_1,\ldots,\tilde{n}_m$ to be the equivalence classes of $[n]$.
	Each square packing $P(\tilde{n}_i) := \{S_j : j \sim n_i\}$ is homothetic,
	hence each has at most $2|\tilde{n}_i| -2$ contacts by \Cref{mainthm}.
	Since there can be no face-to-face contacts between $P(\tilde{n}_i)$ and $P(\tilde{n}_j)$ when $i \not\sim j$,
	we have that $P$ has at most $\sum_{i=1}^m (2|\tilde{n}_i| -2) = 2n - 2m \leq 2n-2$ face-to-face contacts.
	
	Suppose that $P$ has $2n-2$ face-to-face contacts.
	Then $m=1$ and $P$ is a homothetic square packing with $|E_x \triangle E_y| = 2n-2$.
	As $E \supset E_x \triangle E_y$ and $|E| \leq 2n-2$ (\Cref{mainthm}),
	we have $E = E_x \triangle E_y$,
	i.e., $E_x$ and $E_y$ are disjoint sets.
	Hence $P$ has no four squares sharing a corner,
	as this would imply the intersection $E_x \cap E_y$ is non-empty.
\end{proof}

\section{Failure of the natural analogue of \texorpdfstring{\Cref{mainthm}}{Theorem 1} for homothetic cube packings}\label{sec:cube}

For this section we fix the standard cube to be the set $C := \{ (x,y,z) : -1 \leq x,y,z\leq 1 \}$.
We can analogously define the concept of a \emph{homothetic cube packing} to be a set $P=\{C_1,\ldots,C_n\}$ of homothetic copies of $C$ with pairwise disjoint interior.
Similarly we define the centres $p_1,\ldots, p_n \in \mathbb{R}^3$ and radii $r_1,\ldots,r_n >0$ to be the values such that $C_i = r_i C +p_i$ for each $i \in [n]$,
and define the contact graph $G=([n],E)$ by setting $\{i,j\} \in E$ if and only if $i\neq j$ and $C_i \cap C_i \neq \emptyset$.
Given two cubes $C_i$ and $C_j$ in contact with centres $p_i = (x_i,y_i,z_i)$, $p_j = (x_j,y_j,z_j)$ and radii $r_i,r_j$ respectively,
at least one of the following three possibilities hold:
\begin{enumerate}
	\item $r_i + r_j = |x_i - x_j| \geq \max\{ |y_i - y_j|, |z_i - z_j|\}$, in which case we say $C_i$ and $C_j$ have a \emph{$x$-direction contact},
	\item $r_i + r_j = |y_i - y_j| \geq \max\{ |x_i - x_j|, |z_i - z_j|\}$, in which case we say $C_i$ and $C_j$ have a \emph{$y$-direction contact},
	\item $r_i + r_j = |z_i - z_j| \geq \max\{ |x_i - x_j|, |y_i - y_j|\}$, in which case we say $C_i$ and $C_j$ have a \emph{$z$-direction contact}.
\end{enumerate}
An obvious question now is: can we extend our results for homothetic square packings to homothetic cube packings?
Before begin looking into this question in more detail,
we need to understand what the analogous amount of contacts should be.
If each subgraph of the contact $G=([n],E)$ of $P$ formed by either the $x$-, $y$- or $z$-direction contact edges has no cycles,
then $G$ can have at most $3n-3$ edges.
Hence we would (naively) expect that randomly chosen radii will force the number of contacts to be bounded by $3n-3$.
However it is easy to see that this bound must fail for sufficiently large $n$.

\begin{proposition}\label{p:cube1}
	Let $n \geq 7$ and choose any $n$ positive scalars $r_1, \ldots, r_n$.
    Then there exists a homothetic packing of $n$ cubes with radii $r_1,\ldots,r_n$ and more than $3n-3$ contacts.
\end{proposition}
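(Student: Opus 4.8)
The plan is to give a completely explicit construction assembled from a single gadget, the \emph{octant gadget}. Fix any $m\le 8$ distinct sign vectors $\varepsilon^{(1)},\dots,\varepsilon^{(m)}\in\{-1,1\}^3$ and any $m$ of the prescribed radii; for each $j$ set $p_j:=r_j\varepsilon^{(j)}$, so that $C_j=r_jC+p_j$ is the box $\{x\in\mathbb R^3: 0\le \varepsilon^{(j)}_ix_i\le 2r_j \text{ for }i=1,2,3\}$. This box contains the origin, and its interior lies in the open octant $\{x:\varepsilon^{(j)}_ix_i>0 \text{ for all }i\}$. Since distinct sign vectors give disjoint open octants, the $m$ cubes have pairwise disjoint interiors, so they form a valid homothetic cube packing; on the other hand the origin lies in $C_j\cap C_{j'}$ for every $j\ne j'$, and the cubes are pairwise distinct, so the contact graph of the gadget is the complete graph $K_m$. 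Thus $m$ cubes of arbitrary prescribed radii can be arranged to realise $\binom m2$ contacts (here $m\le 8$ is forced since there are only $8$ sign vectors; this is simply the three-dimensional analogue of four squares sharing a corner).

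Given this, for $n\ge 7$ I would write $n=8k+r$ with $0\le r\le 7$ and take $k$ copies of the full $8$-octant gadget together with one further $r$-octant gadget (a single cube if $r=1$, nothing if $r=0$), distributing the $n$ radii arbitrarily among them and placing the $k+1$ gadgets pairwise so far apart that no contacts occur between distinct gadgets and the overall union is still a packing (translating each successive gadget by a large vector depending only on the finitely many bounded boxes already placed suffices). The resulting homothetic cube packing has exactly $28k+\binom r2$ contacts.

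It then remains only to check $28k+\binom r2>3n-3$. Substituting $n=8k+r$ gives $28k+\binom r2-(3n-3)=4k+\tfrac12(r-1)(r-6)$, and since $(r-1)(r-6)\ge -6$ for every integer $0\le r\le 7$ this is at least $4k-3\ge 1$ whenever $k\ge 1$; the only remaining case is $k=0$, which (as $n\ge 7$) forces $r=n=7$ and yields $\binom 72-18=3>0$. Either way the count strictly exceeds $3n-3$, proving \Cref{p:cube1}.

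The construction is elementary, so there is no serious obstacle; the one point requiring a little care is the treatment of the $r$ "leftover" cubes. Simply leaving them pairwise disjoint (contributing no contacts) is \emph{not} enough — that variant already fails at $n=11$, where $28\not>30$ — and partitioning into small cliques is worse still. Forcing the leftover cubes into their own octant gadget is exactly what keeps the contact count above the threshold for the tight small cases $n\in\{11,12\}$, after which (for $k\ge 2$) there is ample slack.
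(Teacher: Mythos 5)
Your construction is exactly the paper's: the paper also partitions the $n$ cubes into $\lceil n/8\rceil$ groups, places each group of (at most) $8$ cubes so that they occupy the eight octants around a common corner (yielding a complete contact graph on each group), separates the groups along the $x$-axis, and verifies $28k+\binom{\ell}{2}>3n-3$ for $n=8k+\ell\ge 7$. Your gadget description and the final arithmetic check are both correct, so the proposal matches the paper's proof in substance.
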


\begin{proof}
	Let $n = 8k+\ell$ for some $\ell \in \{0,\ldots,7\}$, and choose any set of positive scalars $r_1,\ldots,r_n$.
	Choose some $R>0$ such that $4 r_i < R$ for each $i \in [n]$.
	For each $i \in [n]$,
	define the unique non-negative integers $a_i,b_i$ that are the quotient and remainder of $i$ divided by $8$ respectively,
	i.e., $i = 8a_i +b_i$.
	With this we define
	\begin{align*}
		p_i :=
		\begin{cases}
			(r_i+ a_i R,r_i,r_i) &\text{if } b_i = 0\\
			(r_i+ a_i R,r_i,-r_i) &\text{if } b_i = 1\\
			(r_i+ a_i R,-r_i,r_i) &\text{if } b_i = 2\\
			(r_i+ a_i R,-r_i,-r_i) &\text{if } b_i = 3\\
			(-r_i+ a_i R,r_i,r_i) &\text{if } b_i = 4\\
			(-r_i+ a_i R,r_i,-r_i) &\text{if } b_i = 5\\
			(-r_i+ a_i R,-r_i,r_i) &\text{if } b_i = 6\\
			(-r_i+ a_i R,-r_i,-r_i) &\text{if } b_i = 7.
		\end{cases}
	\end{align*}
	Let $P = \{C_1,\ldots,C_n\}$ be the homothetic cube packing where $C_i = r_i C +p_i$ for each $i \in [n]$.
	Two distinct cubes $C_i,C_j$ are in contact if and only if $a_i = a_j$,
	hence
	\begin{align}\label{eq:cube}
		|E| = \binom{8}{2}k + \binom{\ell}{2} = 3n - 3 + \frac{n + \ell(\ell-8) +6}{2}.
	\end{align}
	If $n \geq 11$ then $|E| > 3n-3$ since $\ell(\ell - 8) + 6 \geq -10$.
	For small values of $n$ we can substitute the corresponding remainder $\ell$ into \cref{eq:cube}:
	if $n =7$ then $\ell = 7$ and $|E| =3n$;
	if $n =8$ then $\ell = 0$ and $|E| =3n +4$;
	if $n =9$ then $\ell = 1$ and $|E| =3n +1$;
	if $n =10$ then $\ell = 2$ and $|E| = 3n -1$.
	Hence $P$ has more than $3n-3$ contacts when $n \geq 7$.
\end{proof}

As the complete graph with $n \leq 6$ vertices has at most $3n-3$ edges,
\Cref{p:cube1} cannot be improved.
However, the homothetic packing of $n$ cubes described in \Cref{p:cube1} will always have less than $12(n/8) < 3n-3$ \emph{face-to-face contacts} (i.e., a contact where the intersection is a 2-dimensional convex set).
This leads one to wonder: is the natural analogue to \Cref{mainthm} true if we restrict to face-to-face contacts?
Unfortunately, this too can fail for very general choices of radii.

\begin{proposition}\label{p:cube2}
    Let $n \geq 4$ and $r_1 \geq \ldots \geq r_n >0$.
    If $r_4 + \ldots + r_n < r_3$,
    then there exists a homothetic packing of $n$ cubes with radii $r_1,\ldots,r_n$ and at least $4n-11$ face-to-face contacts (and hence more than $3n-3$ face-to-face contacts when $n \geq 9$).
    
\end{proposition}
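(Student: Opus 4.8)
The plan is to produce an explicit homothetic cube packing realising the bound, in the same spirit as the construction used in the proof of \Cref{mainthm} but with the strict inequality $r_4+\dots+r_n<r_3$ playing the role that an exact linear relation among the radii plays there. Since $r_3$ (and hence also $r_1$ and $r_2$) exceeds $r_4+\dots+r_n$, the total width $\sum_{i\ge 4}2r_i$ of the small cubes is smaller than the width $2r_3$ of any of the three largest cubes; this is exactly the slack needed to guarantee that $C_1,C_2,C_3$ can act as a rigid ``scaffold'' against which every one of $C_4,\dots,C_n$ can be made flush with a large face. Concretely, I would lay $C_4,\dots,C_n$ out in a single row along the $x$-axis, with consecutive cubes in face-to-face contact, and position $C_1$, $C_2$, $C_3$ as three mutually perpendicular walls of a long rectangular trough containing this row, choosing the two coordinate levels of the trough so small that every small cube simultaneously meets two of the walls, and so that at the two ends of the row the scaffold and the extreme small cubes are forced into corner-sharing configurations of the kind analysed (in the square case) in \Cref{l:4share}.

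The contributions to the face-to-face count would then be tallied as follows: the $n-4$ contacts between consecutive small cubes; the contacts between small cubes and the scaffold, of which the inequality $r_4+\dots+r_n<r_3$ guarantees there are roughly two per small cube together with the extra contacts coming from the two corner-sharing end configurations; and the $O(1)$ face-to-face contacts among $C_1,C_2,C_3$ themselves. The positions of the centres would be written down by an explicit case split (largest three cubes first, then the row, then the two end configurations), exactly as in the formula displayed in the proof of \Cref{mainthm}, and one checks directly from the coordinates that the interiors are pairwise disjoint --- here the only non-trivial checks are that the row fits within the faces of the scaffold (which is where $r_4+\dots+r_n<r_3$ is used) and that the two end configurations do not force overlaps, which amounts to the fact that finitely many small cubes can genuinely share a corner with two of the walls. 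Summing the contributions gives $4n-11$, and since $4n-11>3n-3$ precisely when $n\ge 9$ the parenthetical remark follows immediately; the small cases $4\le n\le 8$ are checked by hand against the degenerate forms of the same construction (for $n=4$ the scaffold alone already supplies the required $5$ face-to-face contacts).

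The main obstacle is not any single geometric estimate but the bookkeeping: one must choose the scaffold configuration so that \emph{every} small cube, not merely those near the ends of the row, contributes its full quota of face-to-face contacts, and then verify the exact count $4n-11$ rather than something weaker like $3n-O(1)$. Getting the corner-sharing configurations at the two ends of the row to interact correctly with the row itself --- so that the ``extra'' contacts beyond the obvious tree-like ones actually materialise --- is the delicate part, and is where the proof differs most from the two-dimensional arguments of \Cref{sec:main}, since in the plane such corner-sharing forces the rigid consequences of \Cref{l:cliques2share} that no longer bind in three dimensions. Once the configuration is fixed, the disjointness of interiors and the contact count are both routine verifications from the explicit coordinates.
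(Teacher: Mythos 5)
Your construction as described does not reach the claimed count; the tally is short by a linear term. You allot each of the $n-3$ small cubes ``roughly two'' face-to-face contacts with the scaffold $C_1,C_2,C_3$, plus the $n-4$ contacts between consecutive small cubes, plus $O(1)$ contacts among the walls and at the two ends of the row. That sums to $2(n-3)+(n-4)+O(1)=3n+O(1)$, not $4n-11$; the deficit of roughly $n$ cannot be made up by end-of-row corner configurations, and in any case corner-sharing contacts are not face-to-face contacts, so they contribute nothing to the quantity being counted. The missing idea is that \emph{every} small cube must make \emph{three} face-to-face contacts with the three large cubes, and three mutually perpendicular walls cannot achieve this for a whole row (only the cube at the end of the row can touch a wall perpendicular to the row direction). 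The paper's construction instead places $C_1$ and $C_2$ side by side so that their top faces are \emph{coplanar} (both at height $z=0$, meeting along a seam at $x=0$), puts $C_3$ on top of $C_2$ so that one of its vertical faces lies at $x=r_n$, and then runs the row of small cubes along the $y$-direction so that each small cube straddles the $C_1$--$C_2$ seam (its $x$-extent $[-2r_i+r_n,\,r_n]$ contains $0$ in its interior) while pressing against the face of $C_3$. Thus each small cube gets two $z$-direction contacts (with $C_1$ and $C_2$) and one $x$-direction contact (with $C_3$), giving $3(n-3)$ scaffold contacts; together with $\{1,2\}$, $\{2,3\}$ and the $n-4$ consecutive contacts this is exactly $4n-11$. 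The hypothesis $r_4+\cdots+r_n<r_3$ enters just where you say it should, namely to fit the entire row within the $y$-extent of the three large cubes, but without the coplanar-faces trick the count you would actually verify is only $3n+O(1)$. (Your remark that for $n=4$ ``the scaffold alone already supplies the required $5$'' is also incorrect: three cubes have at most $3$ mutual contacts; the fourth cube must supply its three scaffold contacts, and indeed $\{1,3\}$ is not a contact in the paper's construction, so the count is $2+3=5$.)
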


\begin{proof}
    We begin by defining the homothetic cube packing $P = \{C_1,\ldots,C_n\}$ with radii $r_1,\ldots,r_n$ and centres $p_1,\ldots,p_n$.
    Fix
    \begin{align*}
        p_1 = (-r_1,0,-r_1), \qquad p_2 = (r_2,0,-r_2), \qquad p_3 = (r_3 +r_n ,0,r_3). 
    \end{align*}
    Define $s_4 = r_4 - r_3$ and $s_i = r_i + \sum_{k=4}^{i-1} 2 r_k - r_3$ for $i >4$.
    We now fix $p_i = (-r_i + r_n ,s_i,r_i)$ for all $i \geq 4$.
    With this we fix $C_i = r_i C +p_i$ for each $i \in [n]$.
    
    Let $E$ be the set of pairs $\{i,j\}$ where the cubes $C_i$ and $C_j$ have a face-to-face contact.
    Then
    \begin{align*}
        E = \big\{ \{1,2\},\{2,3\} \big\} \cup \big\{ \{i,j\}: i \in \{1,2,3\} , ~ 4 \leq j \leq n \big\} \cup \big\{ \{i,i+1\} :  4 \leq i \leq n-1 \big\},
    \end{align*}
    and hence $P$ has $2 + 3(n-3) + n-4 = 4n -11$ face-to-face contacts.
\end{proof}

\subsection*{Acknowledgement}
The author was supported by the Heilbronn Institute for Mathematical Research and the Austrian Science Fund (FWF): P31888.

\end{document}